\newcommand\Tstrut{\rule{0pt}{2.5ex}}      
\newcommand\Bstrut{\rule[-1ex]{0pt}{0pt}}
\newtheorem{theorem}{Theorem}[section]
\newtheorem{lemma}[theorem]{Lemma}
\newtheorem{proposition}[theorem]{Proposition}
\newtheorem{corollary}[theorem]{Corollary}
\theoremstyle{definition}
\newtheorem{remark}[theorem]{Remark}
\newtheorem{definition}[theorem]{Definition}
\newtheorem{example}[theorem]{Example}
\newtheorem{problem}[theorem]{Problem}
\newtheorem{question}[theorem]{Question}
\newcommand{\mf}{m_{_{\mathrm{faithful}}}}
\newcommand{\g}{\mathfrak{g}}
\newcommand{\ZZ}{\mathrm{Z}}
\newcommand{\Z}{\mathbb{Z}}
\newcommand{\C}{\mathbb{C}}
\newcommand{\Q}{\mathbb{Q}}
\newcommand{\Hom}{\mathrm{Hom}}
\newcommand{\F}{\mathbb{F}}
\newcommand{\Tr}{\mathbf{Tr}}
\newcommand{\GL}{\mathrm{GL}}
\newcommand{\U}{{\mathrm{U}}}
\newcommand{\Span}{\mathrm{Span}}
\newcommand{\ad}{\mathrm{ad}}
\newcommand{\Iex}{I_\mathrm{ex}}
\newcommand{\rank}{\mathrm{rk}}
\newcommand{\f}{\mathfrak{f}}
\newcommand{\FZ}{\mathbb{Z}/p\mathbb{Z}}
\newcommand{\1}{\mathbf{1}}
\newcommand{\bb}{{\bf b}}
\newcommand{\R}{\mathrm{R}}
\newcommand{\e}{{\bf e}}
\newcommand{\ba}{\mathbf{a}}
\newcommand{\G}{\mathrm{G}}
\newcommand{\V}{\mathcal{V}}
\newcommand{\m}{\mathfrak{m}}
\newcommand{\OO}[1]{\mathcal{O}/\mathfrak{p}^{#1}}
\newcommand{\restr}[2]{#1_{|_{#2}}}
\newcommand{\GG}{\mathscr{G}}
\newcommand{\Hal}{\mathcal{H}}
\newcommand{\T}{\mathbf{T}}
\newcommand{\vv}{{\bf v}}
\newcommand{\ww}{{\bf w}}
\newcommand{\zz}{{\bf z}}
\newcommand{\uu}{{\bf u}}
\newcommand{\pr}{\mathsf{proj}}
\newcommand{\sfe}{\mathsf{e}}
\DeclareMathOperator{\Hei}{Heis}
\begin{document}

\title[Polynomiality of the faithful dimension of $p$-groups]{Kirillov's orbit method and polynomiality of the faithful dimension of $p$-groups}

\author[M. Bardestani]{Mohammad Bardestani}
\email{mohammad.bardestani@gmail.com}
\address{ DPMMS, Centre for Mathematical Sciences, Wilberforce Road, Cambridge, CB3 0WB.}

\author[K. Mallahi-Karai]{Keivan Mallahi-Karai}
\email{k.mallahikarai@jacobs-university.de }
\address{Jacobs University Bremen, Campus Ring I, 28759 Bremen, Germany.}

\author[H. Salmasian]{Hadi Salmasian}
\email{hadi.salmasian@uottawa.ca}
\address{Department of Mathematics, University of Ottawa, 585 King Edward, Ottawa, ON K1N
6N5, Canada.}

\dedication{Dedicated to Mehrdad Shahshahani.}

\classification{20G05 (primary), 20C15, 14G05 (secondary).}
\keywords{Faithful dimension of finite groups, Kirillov's orbit method, Lazard correspondence,  Frobenius sets, free nilpotent Lie algebras.}

\thanks{
Throughout the completion of this work, M.B 
was supported by Emmanuel Breuillard's ERC grant `GeTeMo',
K.M-K was partially supported by the DFG grant DI506/14-1, and H.S was supported by NSERC Discovery Grants RGPIN-2013-355464 and RGPIN-2018-04044. }

\begin{abstract}
Given a finite group $\G$ and a field $K$, the {\it faithful dimension} of $\G$ over $K$ is defined to be the smallest integer $n$ such that $\G$ embeds into $\GL_n(K)$. 
We address the problem of determining 
the faithful dimension of a $p$-group of the form $\GG_q:=\exp(\g \otimes_\Z\F_q)$
associated to $\g_q:=\g \otimes_\Z\F_q$ in the Lazard correspondence, where 
$\g$ is  a nilpotent $\Z$-Lie algebra 
which is finitely generated as an abelian group.
We show that in general the faithful dimension of $\GG_p$ is a piecewise polynomial function of $p$ on a partition of primes into Frobenius sets. 
Furthermore, we prove that for $p$ sufficiently large, there exists a partition of $\mathbb N$ by sets from the Boolean algebra generated by arithmetic progressions, such on each part the faithful dimension of $\GG_q$ for $q:=p^f$ is equal to $f g(p^f)$ for a polynomial $g(T)$. 
We show that for many naturally arising $p$-groups, including a vast class of groups defined by partial orders, the faithful dimension is given  by a single formula of the latter form. The arguments rely on various tools from number theory, model theory, combinatorics and Lie theory. 
\end{abstract}

\maketitle


\section{Introduction}
Let $\G$ be a finite group and let $K$ be a field. The {\it faithful dimension} of $\G$ 
  over $K$, denoted by $m_{\mathrm{faithful},K}(\G)$, is defined to be the smallest possible dimension of a faithful $K$-representation of $\G$. 
 The question of computing or estimating  $m_{\mathrm{faithful},K}(\G)$ has found many applications. 
For instance, it is intimately connected to computing the essential dimension $\mathrm{ed}_K(\G)$ of $\G$, defined by Buhler and 
Reichstein 
\cite{Reichstein},
which is  the smallest dimension of a linearizable $\G$-variety with a faithful $\G$-action. 
 It is known \cite[Proposition 4.15]{BerFav}
that $\mathrm{ed}_K(\G)\leq m_{\mathrm{faithful},K}(\G)$ for every finite group $\G$.  
Karpenko and Merkurjev~\cite{Merkurjev} proved that 
if $\G$ is a $p$-group and  $K$ contains a primitive $p$th root of unity, then $\mathrm{ed}_K(\G)=m_{\mathrm{faithful},K}(\G)$. For further details the reader may wish to consult~\cite{MerkurjevI}.

Note that by a result of Brauer, every complex representation of a $p$-group $\G$ is defined over $\mathbb Q(\zeta)$, where $\zeta$ is a primitive $|\G|$-th root of unity.
 This implies that
 $m_{\mathrm{faithful},K}(\G)
=m_{\mathrm{faithful},\C}(\G)$
whenever $K\supseteq \mathbb Q(\zeta)$.
Therefore we will only consider complex representations and use the shorthand $\mf(\G)$ instead of $m_{\mathrm{faithful},\C}(\G)$.

This work is a continuation of~\cite{BMS} in which the faithful dimension of a large class of $p$-groups was studied. Let us start by recalling some of the results from \cite{BMS}.
Let $F$ be a non-Archimedean local field with a discrete valuation $\nu$. 
We will denote the  
ring of integers 
 of $F$ 
by $\mathcal{O}$, the unique maximal ideal of $\mathcal{O}$ by $\mathfrak{p}$, and the  
residue field $\mathcal{O}/\mathfrak{p}$ 
by $\F_q$, the finite field of order $q:=p^f$, where  $f$ is the {\it absolute inertia degree} of $F$.
The number $e=\nu(p)$ is called the {\it absolute ramification index} of $F$.

For a (commutative and unital) ring $\R$ and 
an integer $k \ge 1$, the $k$th {\it Heisenberg group} with entries in $\R$, denoted by $\Hei_{2k+1}(\R)$, consists of $(k+2)\times (k+2)$ matrices of the form $I_{k+2}+A$, where $A$ is strictly upper triangular and all of its entries other than those on the first row and the last column are zero. Similarly, $\U_k(\R)$ denotes the subgroup of unitriangular matrices in $\GL_k(\R)$, so that $H_{2k+1}(\R) \subseteq U_{k+2}(\R)$.
In~\cite[Theorem 1.1]{BMS} 
we proved
that
\begin{equation}\label{Heis}
\mf\left(\Hei_{2k+1}(\OO{n})\right)=\sum_{i=0}^{ \xi-1 } fq^{k(n-i)},
\end{equation}
where $\xi=\min \left\{ e,n \right\}$.
Also, when
$\mathrm{char}(\mathcal{O}/\mathfrak{p})\neq 2$,
in~\cite[Theorem 1.2]{BMS} 
we showed
that

\begin{equation*}
\mf(\G)=\mf(\Hei_{2k+1}(\OO{n})), \qquad\,
\end{equation*}
for any subgroup $\G$ of $\U_{k+2}(\OO{n})$
that contains
$\Hei_{2k+1}(\OO{n})$. 
In particular for
$F=\F_p((T))$, where $p\geq 3$, we obtained
\begin{equation*}
\mf(\U_k(\F_p[[T]]/(T^n)))=\sum_{i=0}^{n-1} p^{(k-2)(n-i)}\quad\text{for all } k\geq 3.
\end{equation*}
The latter statement implies that if $p \ge 3$ then
\begin{equation}\label{U-al}
\mf(\U_{k}(\F_p))=p^{k-2} \quad \text{ for all }k\geq 3.
\end{equation} 
The right hand side of~\eqref{U-al} 
is a polynomial in $p$.  Note that  
$\U_{k}(\F_p)=\exp(\mathfrak{u}_k\otimes\F_p)$, where
$\mathfrak{u}_k$ is 
the Lie algebra of strictly upper triangular matrices with entries in $\Z$ 
(for the definition of 
the exponential map in this context, see Section~\ref{Main-Section}). Equation~\eqref{U-al} suggests the following problem. 

\begin{problem}[(Polynomiality problem)] 
Among all nilpotent 
$\Z$-Lie algebras $\g$ which are finitely generated as abelian groups,
characterize those for which there exists a polynomial $g(T)$, only depending on $\g$, such that 
\begin{equation*}
\mf(\exp(\g\otimes_\Z\F_p))=g(p),
\end{equation*}  
for all sufficiently large primes $p$.
\end{problem}

This problem is the central guiding principle of this work. Before stating  
our results, let us mention that the methods applied 
in~\cite{BMS}
were based on a suitably adapted version of the Stone-von Neumann theory, whose application is mostly limited to groups of nilpotency class $2$. In this paper, we will replace the Stone-von Neumann theory by {\it Kirillov's orbit method} for finite $p$-groups. The orbit method was initially introduced by Kirillov~\cite{Kirillov} to study unitary representations of nilpotent Lie groups. This machinery was later adapted 
to other classes of groups, such as $p$-adic analytic groups, finitely generated nilpotent groups, and finite $p$-groups (see~\cite{HoweI, HoweII} and~\cite[Proposition 1]{Kazhdan}).
Jaikin-Zapirain~\cite{Jaikin} used this machinery to study the representation zeta functions 
 of compact $p$-adic analytic groups. 
 These zeta functions have also been studied by Avni, Klopsch, Onn and Voll~\cite{Avni}.
We refer the reader to these papers and references therein for more details.

Our approach to Problem 1.1 relies heavily on the notion of the {\it commutator matrix} associated to a nilpotent $\Z$-Lie algebra. 
Since its introduction in the work of Grunewald and Segal~ 
\cite{Grunwald-Segal} as a tool for classification of torsion-free finitely generated nilpotent groups, the notion of the commutator matrix has been used in  studying a large variety of problems related to finite and infinite groups. Voll~\cite{Voll04,Voll05} has used commutator matrices  in his works on normal subgroup lattices of nilpotent groups. Stasinski and Voll~\cite{Stasinski-Voll} also employed them to study
the representation growth of infinite groups. In addition, O'Brien and Voll~\cite{Voll} used commutator matrices for counting conjugacy classes and characters of certain finite $p$-groups. 
In our work, 
we relate the faithful dimension of  finite $p$-groups to the question of existence of sufficiently many points in general position on rank varieties associated with the commutator matrices that were considered by O'Brien and Voll. 

\section{Main results}\label{Main-Section} Before we state our results we will need to set some notation. 
Let $\g$ be a Lie algebra over a commutative ring $\R$.
For $x\in \g$, the map defined by $y\mapsto [x,y]$  is denoted by $\ad_x$. 
Let $(\g^l)_{l\geq 1}^{}$ denote the 
{\it descending central series} of $\g$. 
In other words we set $\g^1:=\g$, and we define $\g^{l+1}$ for $l\geq 1$ inductively, as the $\R$-submodule of $\g$ generated by commutators of the form $[x,y]$, where $x\in\g$ and  $y\in\g^l$. The commutator subalgebra of $\g $ will be denoted by $\g'$. Note that $\g'=\g^2$. 
 The Lie algebra $\g$ is said to be {\it nilpotent} if $\g^{c+1}=0$ for some $c\in\mathbb{N}$. If $c$ is the smallest integer with this property, then $\g$ 
 is said to be \emph{$c$-step nilpotent} or \emph{nilpotent of class $c$}.

Suppose now that $\g$ is a finite $\Z$-Lie algebra
 whose cardinality is a power of $p$, and assume that $\g$ is nilpotent of class $c<p$. One may define a group operation on $\g$ by the Campbell-Baker-Hausdorff formula: For all $x,y\in\g$ we define the group multiplication by
\begin{align*}
x*y:=
\sum_{n>0}\frac{(-1)^{n+1}}{n}\sum_{\substack{(a_1,b_1),\dots, (a_n,b_n)\\ a_j+b_j\geq 1}}\frac{(\sum_{1\leq i\leq n} a_i+b_i)^{-1}}{a_1!b_1!\cdots a_n!b_n!}(\ad_x)^{a_1}(\ad_y)^{b_1}\dots (\ad_x)^{a_n}(\ad_y)^{b_n-1}(y),\notag
\end{align*}
where if $b_n=0$ then the last term $(\ad_y)^{b_n-1}(y)$ is dropped. 
Plainly, if $b_n>1$, or if $b_n=0$ and $a_n>1$, then the corresponding summand vanishes. Note that the above sum is finite, because $\g$ is nilpotent. The group defined in this way is denoted by $\exp(\g)$.
For instance, when $\g$ is 2-step nilpotent and $p\geq 3$, 
the group multiplication of $\exp(\g)$ takes the simple form
$$
x\ast y=x+y+\frac{1}{2}[x,y],$$ 
and when $\g$ is 
3-step nilpotent and $p\geq 5$, we obtain
$$x\ast y=x + y +\frac{1}{2}[x,y]+\frac{1}{12}[x,[x,y]]+\frac{1}{12}[y,[y,x]].$$
Similar formulas can be written for any given nilpotency class when $p$ is large enough. The group $\exp(\g)$ defined above is a $p$-group of nilpotency class $c$. In fact Lazard proved~\cite[Chapter 9]{Khukhro} that every $p$-group $\G$ of nilpotency class $c<p$ arises in this way from a unique Lie algebra $\g:=\mathrm{Lie}(\G)$.

From now on, for a $c$-step nilpotent $\Z$-Lie algebra 
$\g$ 
which is finitely generated as an abelian group,
and for $q:=p^f$ 
with
 $p>c$, we set 
\[
\g_q:=\g \otimes_\Z\F_q\quad\text{ and }\quad
\GG_q:=\exp(\g_q),
\] where 
 $\F_q$ is the finite field with $q$ elements.

\subsection{A palette of possibilities} To illustrate the range of possibilities that can arise, we will start this section with three examples, and then state our  main results. We will elaborate on these examples
 in Section~\ref{Nilpotent-Section}.

\begin{example}[(Elliptic curve)]\label{cubic-curve-example} Let $a$ be a non-zero integer. Consider the $\Z$-Lie algebra $\g_a$, introduced by Boston and Isaacs~\cite[Section 3]{Boston}, which is spanned as a free $\Z$-module by  $\{v_1,\dots,v_9\}$, subject to the relations 
$$
[v_1,v_4]=[v_2,v_5]=[v_3,v_6]=v_7, \ \  [v_1,v_5]=[v_2,v_6]=v_8, \ \  [v_1,v_6]=av_9, \ \  [v_2,v_4]=[v_3,v_4]=v_9.
$$
All other brackets $[v_i,v_j]$ with $i<j$ vanish.  It will be shown 
in Section \ref{cubic-curve-example-sol}
that if $p$ is a sufficiently large prime ($p>1800$  will suffice) and $p$ does not divide $a$, then 
\begin{equation*}
\mf(\exp(\g_a\otimes_\Z\F_p))=3p^2.
\end{equation*}
As we will see in the proof, the uniformity in $p$ is related to the fact that for such values of $p$ the cubic curve
$
Y^2=4aX^3+X^2-4X
$
 has a non-zero rational point over $\F_p$. Note that in this example, aside from a finite set of primes, the value of $\mf(\exp(\g_a\otimes_\Z\F_p))$ is given by one polynomial in $p$. 
\end{example}

\begin{example}[(Binary quadratic form)]\label{Binary quadratic form} Consider the $\Z$-Lie algebra $\g$ spanned as a free $\Z$-module by $\{v_1,\dots,v_6\}$ subject to the  relations:
\begin{equation*}
[v_1,v_2]=[v_3,v_4]=v_5,\quad [v_1,v_4]=[v_2,v_3]=v_6, 
\end{equation*} 
where all other commutators $[v_i,v_j]$ with $i<j$ are defined to be $0$.  Then  
in Section~\ref{Binary quadratic form-sol}
we will show  that for odd primes $p$, the value of  $\mf(\GG_p)$ is given by two different polynomials along two arithmetic progressions, namely, 
\begin{equation*}
\mf(\GG_p)=\begin{cases}
2p & \text{if }\,p\equiv 1 \pmod{4};\\
2p^2 & \text{if }\, p\equiv 3 \pmod{4}.
\end{cases}
\end{equation*}  
Put  more formally, set
$$\mathscr{P}_1:=\{p\geq 3: p\equiv 1\pmod{4}\}\quad
\text{ and }\quad \mathscr{P}_2:=\{p\geq 3: p\equiv 3\pmod{4}\}.$$
Also, set $g_1(T):=2T$ and $g_2(T):=2T^2$. 
Then $
\mf(\GG_p)=g_i(p)
$
for all $p\in\mathscr{P}_i$, $i=1,2$.
\end{example}

As the next example shows, even this is not the end of the story.
  
\begin{example}[(Binary cubic form)]\label{Binary cubic form}
Consider the $\Z$-Lie algebra $\g$ spanned 
as a free $\Z$-module by $\{v_1,\dots,v_8\}$ with the following relations:
\begin{equation*}
[v_1,v_4]=[v_2,v_5]=[v_3,v_6]=v_7,\quad [v_1,v_5]=[v_2,v_6]=[v_3,v_5]=v_8,\quad [v_3,v_4]=-v_8.
\end{equation*} 
All other commutators $[v_i,v_j]$ with $i<j$ vanish.  
Let $p$ be an odd prime. 
In Section \ref{Binary cubic form-sol} we will show that  
\begin{equation*}
\mf(\GG_p)=\begin{cases}
p^2+p^3 & \text{if }\,\left(\frac{p}{23}\right)=-1;\\
2p^3 & \text{if }\,p\,\, \text{is represented by the form\,}  2x^2+xy+3y^2; \\
2p^2 & \text{if }\,p\,\, \text{is represented by the form\,} x^2+xy+6y^2\quad \text{or}\quad p=23,
\end{cases}
\end{equation*}
where $(\frac{\cdot}{p})$ is the Legendre symbol. 
The conditions defining this function split the set of prime numbers $p\geq 3$ into disjoint sets $\mathscr{P}_1, \mathscr{P}_2$ and $\mathscr{P}_3$. On each one of these sets, one of the polynomials $g_1(T)=T^2+T^3$, $g_2(T)=2T^3$ and $g_3(T)=2T^2$ is applicable. It is worth mentioning that 
by Gauss genus theory,  
the sets $ \mathscr{P}_2$ and $\mathscr{P}_3$ 
are {\it not} unions of arithmetic progressions
(for example see \cite{Kusaba}).
\end{example}

\begin{example}[(Lee's Lie algebra)]\label{LeeLie}
Consider the $\Z$-Lie algebra $\g$ spanned 
as a free $\Z$-module by $\{v_1,\dots,v_8\}$ with the following relations:
\begin{equation*}
[v_1,v_4]=[v_2,v_5]=v_6,\quad2[v_1,v_5]=[v_3,v_4]=2v_7,\quad[v_2,v_4]=[v_3,v_5]=v_8.\end{equation*} 
All other commutators $[v_i,v_j]$ with
$i<j$ vanish. 
This Lie algebra was  defined in \cite{SLee}. 
Let $p$ be an odd prime. 
In Section \ref{LeeLie-sol} we will show that 
\begin{equation*}
\mf(\GG_p)=\begin{cases}
p+2p^2 & \text{if }\,p\equiv 2\pmod{3}\text{ or }p=3;\\
3p & \text{if }\,p\equiv 1\pmod{3}\text{ and }p\,\, \text{is represented by the form\,}  x^2+27y^2; \\
3p^2 & \text{if }\,p\equiv 1\pmod{3}\text{ and }p\,\, \text{is not represented by the form\,}  x^2+27y^2.
\end{cases}
\end{equation*}
We remark that in \cite{SLee}, the author computes   the order of the automorphism group of the Lie algebra $\g_p$. 
The formula obtained in \cite{SLee} is given by different polynomials depending on  the splitting of  $\lambda^3-2$ 
in $\F_p$, and therefore its cases are  parallel to the ones that appear above.
\end{example}

The important point to note here is that in each one of the above examples, the set of primes can be decomposed into finitely many {\it arithmetically defined} sets, such that the value of $\mf(\GG_p)$ on each 
one of these sets is given by a polynomial. 
Let us explain this in more detail. For a polynomial $g(T)\in\Z[T]$, denote by $\V_{g}$ the set of primes $p$ for which the congruence $g(T)\equiv 0 \pmod{p}$ has a solution. 
We will call $\V_{g}$ an {\it elementary Frobenius set}. Let $\mathbb{P}$ denote the set of prime numbers. 
By a {\it Frobenius set} we mean 
an element of the Boolean algebra inside the power set of $\mathbb{P}$ that is  
generated by elementary Frobenius sets. In other words, a Frobenius set is a finite union of sets
of the form
$$
\V_{g_1}\cap \dots \cap\V_{g_k}\cap \V_{g_{k+1}}^c\cap \dots\cap \V_{g_l}^c.
$$
We remark that every  Frobenius set is a \emph{Frobenian set}, as defined by Serre~\cite[Section 3.3.1]{SerreNX}, but the converse does not hold.
For more details about the connection between Frobenius and Frobenian sets, see~\cite{Lagarias}.

For $p>2$ the equation $x^2+1=0$ has a solution in $\F_p$ if and only if $p \equiv 1 \pmod{4}$. This shows that the sets appearing in Example~\ref{Binary quadratic form} are Frobenius sets. 
One can see that the sets $\mathscr{P}_i$ in Example~\ref{Binary cubic form} are Frobenius sets as follows.
First, using the quadratic reciprocity law one can easily verify that 
the set $\mathscr{P}_1$ consists of those primes $p\geq 3$ for which the equation $x^2+23$ has no solution in $\F_p$. The other two parts are based on the less trivial fact that $p\geq 3$ can be represented by the quadratic form $a^2+ab+6b^2$ (respectively,  $2a^2+ab+3b^2$) if and only if $p\not\in\mathscr{P}_1$ and $x^3-x-1$ has a solution (respectively, no solution) in $\F_p$. Consequently, each $\mathscr{P}_i$ is a Frobenius set.

Let $\mathscr{P}$ be any set of primes. The {\it Dirichlet density} of $\mathscr{P}$ is defined by
\begin{equation*}
d(\mathscr{P}):=\lim_{s\to 1^+}\frac{\sum_{p\in \mathscr{P}}p^{-s}}{\sum_{p\in \mathbb{P}}p^{-s}},
\end{equation*}
if the limit exists. 
The Chebotarev density theorem implies that any infinite Frobenius set has a positive Dirichlet density. It follows from Dirichlet's theorem on primes in arithmetic progressions that the Dirichlet density of the sets $\mathscr{P}_i$ in  Example~\ref{Binary quadratic form} is positive. For the sets  $\mathscr{P}_i$  in Example~\ref{Binary cubic form},
positivity of the Dirichlet density 
can be proved by
class field theory. For more details we refer the reader to~\cite[Theorem 9.12]{Cox}.
\medskip

 With this preparation, we are now ready to state our first result.
\begin{theorem}\label{rationality-Ax}
Let $\g$ be a nilpotent $\Z$-Lie algebra of nilpotency class $c$ which is finitely generated as an abelian group. 
Then there exist a partition $\mathscr{P}_1, \dots, \mathscr{P}_r$ of the set of prime numbers larger than $c$ into Frobenius sets, and  polynomials $g_1(T),\dots, g_r(T)$ with non-negative integer coefficients, depending only on $\g$,  such that 
\[
\mf(\GG_p)=g_i(p)\ \text{  for all } p\in\mathscr{P}_i,
\]
where
$ 1 \le i \le r$. 
\end{theorem}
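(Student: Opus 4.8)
\emph{Orbit-method reduction.} The plan is to turn the computation of $\mf(\GG_p)$, via Kirillov's orbit method, into a minimum-weight basis problem in the dual of the centre of $\g_p$, and then to show that the finitely many integer invariants entering the answer are governed by Frobenius conditions on $p$ by means of a point count. First I would replace $\g$ by $\g/\mathrm{tors}(\g)$: the torsion subgroup is a characteristic ideal, so $\g\otimes_\Z\F_p=(\g/\mathrm{tors})\otimes_\Z\F_p$ whenever $p$ does not divide the finite torsion, and the finitely many exceptional primes $>c$ can be placed in their own parts, each being a one-element (hence Frobenius) set. So assume $\g$ is free of rank $n$ over $\Z$, with integral structure constants. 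For $p>c$ the Lazard correspondence produces $\GG_p=\exp(\g_p)$, and Kirillov's orbit method for finite $p$-groups of class $<p$ gives a bijection between the irreducible characters of $\GG_p$ and the coadjoint orbits $\Omega\subseteq\g_p^*$, with $\dim\rho_\Omega=|\Omega|^{1/2}$. Writing $B_\lambda(x,y):=\lambda([x,y])$ for the alternating form attached to $\lambda\in\g_p^*$, one has $|\Omega_\lambda|=p^{\rank B_\lambda}$ with $\rank B_\lambda$ even, hence $\dim\rho_{\Omega_\lambda}=p^{\frac12\rank B_\lambda}$. Put $\mathfrak z:=Z(\g)$; since $\g/\mathfrak z$ is torsion-free, $\mathfrak z$ is an integral direct summand of $\g$, and for all but finitely many $p$ one has $Z(\g_p)=\mathfrak z\otimes_\Z\F_p=:\mathfrak z_p$, so that $Z(\GG_p)=\exp(\mathfrak z_p)$ is elementary abelian and equals the socle of $\GG_p$; the exceptional primes again go into singleton parts. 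After fixing a nontrivial additive character of $\F_p$, the central character of $\rho_{\Omega_\lambda}$ is identified with $\lambda|_{\mathfrak z_p}\in\mathfrak z_p^*$, a quantity constant along coadjoint orbits, and a finite direct sum $\bigoplus_i\rho_{\Omega_{\lambda_i}}$ is faithful if and only if $\{\lambda_i|_{\mathfrak z_p}\}_i$ spans $\mathfrak z_p^*$. Since a minimal faithful representation of a $p$-group may be taken multiplicity-free, this gives
\[
\mf(\GG_p)=\min\Bigl\{\ \sum\nolimits_i p^{\frac12\rank B_{\lambda_i}}\ :\ \lambda_i\in\g_p^*,\ \Span_{\F_p}\{\lambda_i|_{\mathfrak z_p}\}_i=\mathfrak z_p^*\ \Bigr\}.
\]

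\emph{The matroid computation.} Every summand above is $\ge1$, so an optimal family may be thinned until the functionals $\lambda_i|_{\mathfrak z_p}$ form a basis of $\mathfrak z_p^*$. For $v\in\mathfrak z_p^*\setminus\{0\}$ set $c_p(v):=\min\{p^{\frac12\rank B_\lambda}:\lambda|_{\mathfrak z_p}=v\}$; because $\mathfrak z_p\subseteq\mathrm{rad}\,B_\lambda$ always, this takes values in the fixed finite set $\{1,p,\dots,p^{M}\}$ with $M:=\bigl\lfloor\tfrac12\dim_\Q\bigl((\g/\mathfrak z)\otimes_\Z\Q\bigr)\bigr\rfloor$. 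Thus $\mf(\GG_p)$ is the minimum weight of a basis of the linear matroid on $\mathfrak z_p^*\setminus\{0\}$ with weights $c_p$, and the greedy algorithm evaluates this as
\[
\mf(\GG_p)=\sum_{m=0}^{M}\bigl(D_m(p)-D_{m-1}(p)\bigr)\,p^{m},\qquad D_{-1}(p):=0,
\]
where $D_m(p):=\dim_{\F_p}\Span_{\F_p}\{\pi(\lambda):\lambda\in\g_p^*,\ \rank B_\lambda\le 2m\}$ and $\pi\colon\g^*\to\mathfrak z^*$ is the integral projection dual to $\mathfrak z\hookrightarrow\g$. The coefficients $D_m(p)-D_{m-1}(p)$ are non-negative integers, the relevant sets and hence their spans being nested in $m$; so everything comes down to the dependence of $D_m(p)$ on $p$.

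\emph{The Frobenius dichotomy.} Fixing a $\Z$-basis of $\g$, the form $B_\lambda$ is the alternating matrix of integral linear forms in the coordinates of $\lambda$ determined by the structure constants --- the commutator matrix --- so for each $m,d\ge0$ the assertion that there exist $\lambda^{(1)},\dots,\lambda^{(d)}\in\g_p^*$ with $\rank B_{\lambda^{(j)}}\le 2m$ for all $j$ and with $\pi(\lambda^{(1)}),\dots,\pi(\lambda^{(d)})$ linearly independent over $\F_p$ is the assertion that a constructible subset $W_{m,d}$ of affine space over $\Z$ --- defined by vanishing of the $(2m{+}1)$-minors of each $B_{\lambda^{(j)}}$ and non-vanishing of some $d\times d$ minor of $(\pi\lambda^{(1)}\,|\,\cdots\,|\,\pi\lambda^{(d)})$ --- has $W_{m,d}(\F_p)\ne\varnothing$; hence $D_m(p)\ge d\iff W_{m,d}(\F_p)\ne\varnothing$. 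I would then invoke the general fact that for any constructible subset $W$ of affine space over $\Z$, the set $\{p\in\Prime:W(\F_p)\ne\varnothing\}$ is a Frobenius set. To see this, decompose the generic fibre $W_\Q$ into its irreducible components over $\overline\Q$, group them into Galois orbits, and let $F_1,\dots,F_N$ be the minimal fields of definition of orbit representatives, with $g_j\in\Z[T]$ a minimal polynomial of a primitive element of $F_j$; then, away from the finitely many primes of bad reduction and ramification, a given component is defined over $\F_p$ exactly when $F_j$ has a degree-one prime above $p$, i.e. when $p\in\V_{g_j}$, and --- by the Lang--Weil estimate when that component is positive-dimensional, and trivially when it is a point --- this is also exactly when the component carries an $\F_p$-point; therefore $W(\F_p)\ne\varnothing\iff p\in\bigcup_{j=1}^{N}\V_{g_j}$ for all large $p$, and since the Frobenius sets form a Boolean algebra containing all finite sets, $\{p:W(\F_p)\ne\varnothing\}$ is Frobenius. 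Consequently each $\{p>c:D_m(p)\ge d\}$, and hence each $\{p>c:D_m(p)=d\}$, is a Frobenius set; since $D_m(p)$ is bounded by $\dim_\Q(\mathfrak z\otimes_\Z\Q)$ it takes only finitely many values, so intersecting these partitions over $m=0,\dots,M$ and adjoining the earlier exceptional primes as singletons produces the desired partition of $\{p\in\Prime:p>c\}$ into Frobenius sets $\mathscr P_1,\dots,\mathscr P_r$. On the part where $D_m(p)\equiv d_m$ for all $m$, the formula of the previous paragraph reads $\mf(\GG_p)=\sum_{m=0}^{M}(d_m-d_{m-1})\,p^{m}$, a polynomial in $p$ with non-negative integer coefficients; on each singleton part a constant polynomial serves.

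\emph{Main obstacle.} The step I expect to require the most care is the lemma that ``$W(\F_p)\ne\varnothing$'' defines a Frobenius set. The point is that a geometrically irreducible component of $W_\Q$ need not be defined over $\Q$, so the set of $p$ over which it acquires an $\F_p$-point is a genuine Chebotarev condition --- this is already visible in Example~\ref{Binary quadratic form}, where the two relevant components are conjugate over $\Q(\sqrt{-1})$ and the value of $\mf(\GG_p)$ alternates with $p\bmod 4$ --- and one must check that this condition is precisely the elementary Frobenius set $\V_{g_j}$ attached to the field of definition, which is what keeps it inside the Boolean algebra of Frobenius sets rather than only the larger class of Frobenian sets. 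By contrast, the orbit-method dictionary and the matroid computation are routine uses of standard machinery.
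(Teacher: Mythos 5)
Your proposal is correct, and it rests on the same two pillars as the paper's proof: the orbit-method reduction of $\mf(\GG_p)$ to a rank-minimization problem for the commutator matrix, and Ax's theorem that solvability of an integral polynomial system modulo $p$ cuts out a Frobenius set. The difference lies in the combinatorial middle layer. The paper first proves Theorem~\ref{Algorithm}, expressing $\mf(\GG_q)$ as a minimum of $\sum_\ell fq^{\rank F_\g(\cdot)/2}+fl_2$ over invertible $l_1\times l_1$ matrices of central coordinates, and then, in Section~\ref{subsec:pfrationality-Ax}, enumerates all candidate rank vectors $\mu=(a_1,\dots,a_{l_1})$, orders the associated polynomials $g_\mu$ reverse-lexicographically so that $g_\mu(p)<g_{\mu'}(p)$ for $p>l_1$, attaches to each $\mu$ a variety $\mathbf X_\mu$ recording that the ranks are exactly $2a_i$, and sets $\mathscr P_k=\Sigma_{\mu_k}\setminus\bigcup_{i<k}\Sigma_{\mu_i}$. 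You instead recognize the minimization as a minimum-weight-basis problem in the linear matroid on $\mathfrak z_p^*$ and solve it by the greedy algorithm, obtaining $\mf(\GG_p)=\sum_m(D_m(p)-D_{m-1}(p))\,p^m$ with $D_m(p)$ the rank of the set of central functionals admitting a lift $\lambda$ with $\rank B_\lambda\le 2m$; the conditions $D_m(p)\ge d$ become non-emptiness of constructible sets $W_{m,d}$ over $\Z$, to which Ax applies. Your greedy computation is valid (the number of basis elements of weight at most $p^m$ in any basis is bounded by the rank of the weight-$\le p^m$ stratum, and the greedy basis attains all these bounds), and this organization buys you nested conditions indexed by $(m,d)$ in place of an ordered list of all rank vectors, together with transparent non-negativity of the coefficients; the paper's formulation has the advantage of being exactly the one reused for Theorem~\ref{APsets} and in the worked examples. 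Your handling of torsion and of the identification $\ZZ(\g_p)=\mathfrak z\otimes_\Z\F_p$ for large $p$ matches the role of the paper's constants $C_1,C_2,C_3$.

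One caveat: your parenthetical sketch of the fact that $\{p:W(\F_p)\ne\varnothing\}$ is Frobenius is not quite correct as stated. The equivalence ``$W(\F_p)\ne\varnothing$ if and only if some geometrically irreducible component of $W$ is defined over $\F_p$'' fails in the direction you need: two conjugate components can meet in a rational point (e.g.\ $x^2+y^2=0$ over $\F_p$ with $p\equiv 3\pmod 4$ has the rational point $(0,0)$ although neither line $x=\pm iy$ is defined over $\F_p$), so $W$ may acquire $\F_p$-points with no individual component rational. The standard repair is an induction on dimension, treating the Frobenius-stable components by Lang--Weil and relegating rational points of non-stable components to the lower-dimensional locus $\bigcup_{i\ne j}Z_i\cap Z_j$. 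Since you also invoke the statement as a known theorem --- which is precisely what the paper does, citing \cite{Ax} --- this does not affect the validity of your proof, but the sketch should not be presented as a complete justification.
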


The proof of this theorem relies on a theorem of Ax~\cite{Ax} from model theory (see also van den Dries~\cite{vandenDries}), coupled with a parameterization of irreducible representations provided by the Kirillov machinery.  
As we shall see, 
studying $\mf(\GG_p)$  
leads to questions related to the existence of rational points over finite fields of certain determinantal varieties associated with the commutator matrix.
We remark that our proof is effective
and 
we can use its method to describe the associated Frobenius sets and polynomials. We will demonstrate this by a detailed analysis of the above examples after we give the proof of 
Theorem \ref{rationality-Ax}.

One can also consider 
$\mf(\GG_q)$ for $q:=p^f$
when the prime $p$ is fixed and $f$ varies. 
Let $\mathscr{P}$ be one of the Frobenius sets 
in Theorem~\ref{rationality-Ax} with the
associated polynomial $g(T)\in\Z[T]$.
The following example shows that it is not necessarily true that 
$$
\mf(\GG_q)=fg(q)\text{ for all }f\geq 1.
$$

\begin{example}
\label{ex:verticalprime}
 Take $\g$ as in Example~\ref{Binary quadratic form} and set $\mathscr{P}:=\mathscr{P}_2$, where 
$\mathscr{P}_2$ 
is as in the same example.   
 Recall that 
$$\mf(\GG_p)=2p^2\ 
\text{ for all $p\in\mathscr{P}$}.
$$
However, 
in Section \ref{Binary quadratic form-sol} we shall demonstrate that 
$$
\mf(\GG_q)=
\begin{cases}
2fq & \text{$f$ is even;}\\
2fq^2 & \text{$f$ is odd.}
\end{cases}
$$
\end{example}
\medskip
Nevertheless, we prove the following theorem which shows that in general, the behaviour of $\mf(\GG_q)$ for $q:=p^f$ with $p$ fixed is
 very similar to the above example. 
\begin{theorem}\label{APsets}
Let $\g$ be a nilpotent $\Z$-Lie algebra of nilpotency class $c$ which is finitely generated as an abelian group. Fix a prime $p>C$, where 
$C$ is the constant given in \eqref{eq:bigconstantnt}.
Then there exist a partition $\mathscr{A}_1, \dots, \mathscr{A}_r$ of the set of natural numbers,   and  polynomials $g_1(T),\dots, g_r(T)$ with non-negative integer coefficients, depending on $p$ and $\g$, such that
\begin{itemize}
\item[1)] Each $\mathscr{A}_i$, $1\leq i\leq r$, is a union of a finite set and finitely many arithmetic progressions.
\item[2)]
 For all $1\leq i\leq r$, if $q=p^f$ where $f\in\mathscr{A}_i$, then
$\mf(\GG_q)= fg_i(q)$.
\end{itemize}

\end{theorem}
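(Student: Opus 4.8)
The plan is to reduce the computation of $\mf(\GG_q)$ to a minimum-weight basis problem over $\F_p$, to extract a factor of $f$ from the $\F_q^\times$-symmetry of the weight function, and then to control the remaining combinatorial data as $f$ varies by means of the model theory of finite fields.

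First I would invoke Kirillov's orbit method for $\GG_q$, valid for $p>C$ (as set up in the preceding sections): the irreducible representations of $\GG_q$ are parametrised by coadjoint orbits in $\g_q^\ast$; the representation $\rho_\lambda$ attached to the orbit of $\lambda$ has dimension $q^{d(\lambda)}$, where $d(\lambda)=\tfrac12\big(\dim_{\F_q}\g_q-\dim_{\F_q}\g_q(\lambda)\big)$ and $\g_q(\lambda)$ is the radical of the alternating form $B_\lambda(x,y):=\lambda([x,y])$ --- note that $d(\lambda)$ depends only on $\lambda|_{\g_q'}$, and $q^{d(\lambda)}$ is governed by the ranks of the commutator matrix of $\g$; and the central character of $\rho_\lambda$ on the socle $\exp(\ZZ(\g_q))=\ZZ(\GG_q)$ is determined by the restriction $\pi(\lambda)$, where $\pi\colon\g_q^\ast\to\ZZ(\g_q)^\ast$ is restriction of functionals. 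Since $\GG_q$ is a $p$-group and its socle is elementary abelian, a representation is faithful precisely when the central characters of its irreducible constituents span the dual of the socle over $\F_p$. Putting $\delta(\chi):=\min\{d(\lambda):\pi(\lambda)=\chi\}$ for $\chi\in\ZZ(\g_q)^\ast$, and observing that deleting a linearly dependent vector from a spanning set only decreases $\sum q^{\delta(\chi)}$ (as $q^{\delta(\chi)}\ge1$), we see that $\mf(\GG_q)$ equals the minimum weight of an $\F_p$-basis of $\ZZ(\g_q)^\ast$ for the weight $\chi\mapsto q^{\delta(\chi)}$. The greedy algorithm for minimum-weight bases then gives
\begin{equation}\label{eq:greedyform}
\mf(\GG_q)=\sum_{j\ge0}\big(s_j(f)-s_{j-1}(f)\big)\,q^{\,j},\qquad s_j(f):=\dim_{\F_p}\Span_{\F_p}\big\{\chi\in\ZZ(\g_q)^\ast:\delta(\chi)\le j\big\},
\end{equation}
with $s_{-1}(f):=0$; the sum is finite since $\delta\le\tfrac12\dim\g$.

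Next I would extract the factor $f$. Put $X_j:=\{\chi\in\ZZ(\g_q)^\ast:\delta(\chi)\le j\}$. For $a\in\F_q^\times$ the form $B_{a\lambda}=aB_\lambda$ has the same radical as $B_\lambda$, hence $\delta(a\chi)=\delta(\chi)$ and $X_j$ is stable under scaling by $\F_q^\times$. It follows that the $\F_p$-subspace $\Span_{\F_p}(X_j)$ is stable under multiplication by $\F_q$, i.e.\ it is an $\F_q$-subspace of $\ZZ(\g_q)^\ast$; therefore $s_j(f)=f\,t_j(f)$, where $t_j(f):=\dim_{\F_q}\Span_{\F_q}(X_j)\in\{0,1,\dots,m\}$ and $m:=\dim_{\F_q}\ZZ(\g_q)$ is independent of $f$ (being the $\F_q$-corank of a matrix with entries in $\F_p$). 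Substituting into \eqref{eq:greedyform} yields $\mf(\GG_q)=f\cdot g_f(q)$ with $g_f(T):=\sum_{j\ge0}\big(t_j(f)-t_{j-1}(f)\big)T^{\,j}\in\Z[T]$, whose coefficients are non-negative because $X_{j-1}\subseteq X_j$ forces $t_{j-1}(f)\le t_j(f)$.

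It remains to prove that the tuple $f\mapsto\big(t_0(f),t_1(f),\dots\big)$ is eventually periodic; this is the hard part. Fixing the $\Z$-basis of $\g$ identifies $\g_q^\ast$ with $\F_q^{\dim\g}$ and $\ZZ(\g_q)^\ast$ with a fixed $\F_p$-rational subspace, and the relation ``$\delta(\chi)\le j$'' unwinds to an existential first-order condition on $\chi$ over $\F_p$: there is a $\lambda$ with $\pi(\lambda)=\chi$ (finitely many $\F_p$-linear equations coming from the structure constants of $\g$ reduced mod $p$) for which every $(2j+1)\times(2j+1)$ minor of the matrix of $B_\lambda$ --- equivalently of the commutator matrix of $\g$ specialised at $\lambda$ --- vanishes (finitely many polynomial equations over $\Z$). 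Hence, for each $j$ and each $t$, the assertion ``$X_j$ contains $t$ vectors that are $\F_q$-linearly independent'' is a sentence $\varphi_{j,t}$ in the language of rings with parameters in $\F_p$, and $t_j(f)\ge t$ if and only if $\F_{p^f}\models\varphi_{j,t}$. By Ax's analysis of the elementary theory of finite fields~\cite{Ax} (see also~\cite{vandenDries}), for our fixed $p$ the truth set $\{f\ge1:\F_{p^f}\models\varphi_{j,t}\}$ is eventually periodic, hence a union of a finite set and finitely many arithmetic progressions; taking the finitely many relevant Boolean combinations shows that $f\mapsto\big(t_0(f),t_1(f),\dots\big)$ is constant on each block of such a partition of $\mathbb N$. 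Declaring these blocks to be $\mathscr A_1,\dots,\mathscr A_r$ and the corresponding polynomials $g_f$ to be $g_1,\dots,g_r$ yields both assertions. The main obstacle is exactly this last step: one cannot argue by a naive base change, since $X_j$ is only the image of the $\F_q$-points of an $\F_p$-variety under a linear projection and need not be the set of $\F_q$-points of an $\F_p$-scheme --- indeed Example~\ref{ex:verticalprime} already shows that $t_j(f)$ genuinely oscillates with the residue of $f$ modulo a small integer --- so the model-theoretic input (or, alternatively, the rationality of the associated zeta functions together with the Skolem--Mahler--Lech theorem) is what makes the argument work, while the $\F_q^\times$-invariance of $\delta$ is the observation responsible for the clean shape $\mf(\GG_q)=f\,g_i(q)$.
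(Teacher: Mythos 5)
Your proposal is correct, and although it shares the paper's skeleton (Kirillov's orbit method, the reduction of faithfulness to the condition that the central characters span $\widehat{\Omega}_1(\ZZ(\GG_q))$, and the control of irreducible dimensions by ranks of the commutator matrix --- essentially the content of Theorem~\ref{Algorithm}), it diverges from the paper's argument at both decisive steps. To extract the factor $f$, the paper passes from $\F_p$-bases (``admissible sets'') to $\F_q$-bases (``regular sets'') via the Rado--Horn theorem (Lemma~\ref{blocks}); you instead observe that $\delta$ is invariant under the $\F_q^\times$-scaling of $\widehat{\g}_q$, so each $\Span_{\F_p}(X_j)$ is automatically an $\F_q$-subspace, and the matroid greedy formula then yields the closed form $\mf(\GG_q)=f\sum_j\bigl(t_j(f)-t_{j-1}(f)\bigr)q^j$ directly; this is a clean alternative route to \eqref{f.min} and arguably to Theorem~\ref{Algorithm} itself. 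For the eventual periodicity in $f$, the paper applies Dwork's rationality theorem to the point counts $N_{\X_\mu}(p^f)$ of the rank varieties and then Skolem--Mahler--Lech, whereas you encode ``$t_j(f)\geq t$'' as a first-order sentence and invoke the vertical case of Ax's theory of finite fields; both work, and you rightly note that a naive base-change argument fails because $X_j$ is only a projected constructible set. Two minor caveats: the model-theoretic input you need (every sentence is equivalent, in all but finitely many finite fields, to a Boolean combination of one-variable solvability statements, whose truth in $\F_{p^f}$ for fixed $p$ is governed by divisibility conditions on $f$) comes from Ax's 1968 paper on the elementary theory of finite fields rather than the 1967 paper \cite{Ax} cited here, and the fact that the equivalence may fail on finitely many fields is harmless only because your target statement tolerates finite exceptional sets. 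Finally, your partition of $\mathbb{N}$ by the tuples $\bigl(t_0(f),t_1(f),\dots\bigr)$ is a priori different from the paper's partition by the first rank-vector variety $\X_\mu$ acquiring an $\F_{p^f}$-point, but both lie in the Boolean algebra generated by finite sets and arithmetic progressions, so the conclusion is the same.
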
 
The proof of Theorem~\ref{APsets} uses Dwork's theorem on rationality of zeta functions of varieties and the Skolem-Mahler-Lech theorem.

\begin{remark}It would be interesting to obtain a uniform generalization of Theorem~\ref{rationality-Ax}
and Theorem~\ref{APsets}. 
\end{remark}

At this point two remarks are in order. On the one hand, Theorem~\ref{rationality-Ax}
and Theorem~\ref{APsets} set an upper limit on how complicated the value of $\mf(\GG_q)$ as a function of $p$ and $f$ can be. It would be desirable to know  to what extent the collection of functions appearing in Theorem \ref{rationality-Ax} can be realized as $\mf(\GG_q)$ for some Lie algebra $\g$. On the other hand, one may still hope that at least for a large class of naturally arising Lie algebras $\g$, the function $\mf(\GG_q)$ is given by a {\it single} polynomial. 
In the next section we address these two  problems.
 
\subsection{Pattern groups}\label{Root-Groups-Sec}
Let us begin this section by introducing a large class of nilpotent groups which  can be viewed as a generalization of the $n$th Heisenberg group $\Hei_{2n+1}(\F_q)$ defined in the introduction.

Set $[n]:=\{1, 2,\dots , n\}$, and let $([n], \prec)$ be a partially ordered set. Without loss of generality, we can assume that if $i\prec j$ then $i < j$. To this partial order we 
assign the {\it pattern Lie algebra}
$$\g_\prec:=\Span_{\Z}\{\e_{ij}: i\prec j\}\subseteq \mathfrak{gl}(n,\Z),$$  
where $\e_{ij}$ denotes the $n\times n$  matrix whose unique nonzero entry is a  $1$ in the $(i,j)$ position.

Note that $\g_\prec$ is nilpotent since 
the commutator relation
$$
[\e_{ij},\e_{kl}]=\delta_{jk}\e_{il}-\delta_{li}\e_{kj}
$$
ensures that $\g_\prec$ is  a subalgebra of the Lie algebra $ \mathfrak{u}_n$ of strictly upper-triangular $n$ by $n$ matrices. For instance, the $(2n+1)$-dimensional Heisenberg Lie algebra corresponds to the partial order
\begin{equation}\label{Hei-poset}
1 \prec 2,3, \dots, n+1 \prec n+2.
\end{equation}
For all $i\prec j$ define 
$$
\alpha(i,j):=\#\{k\in [n]: i\prec k\prec j\}.
$$
Moreover the {\it length} of $([n],\prec)$, denoted by $\lambda_\prec$, is defined to be the maximum value of 
$r$ such that there exists a chain $i_0 \prec \cdots \prec i_r$ in $([n], \prec)$. For instance the length of the partial order given in~\eqref{Hei-poset} is equal to $2$. We remark that  $\lambda_\prec$ is equal to the nilpotency class of $\g_\prec$ 
(see Lemma \ref{generator-comm}).
\begin{definition}
\label{dfn:exxtr}
An ordered pair $(i,j)$  is called an {\it extreme pair} 
if $i$ 
is a minimal element in $([n], \prec)$, 
$j$ is a maximal element in $([n], \prec)$, and $i\prec j$. The set of extreme pairs will be denoted by $\Iex$. 
\end{definition}

We can now state our theorem on the faithful dimension of pattern groups.
\begin{theorem}\label{partial-order-Lie}
Let $\prec$ be a partial order of 
length 
$\lambda_\prec$ on the set $[n]$,
and let $q:=p^f$ where  
$p>\lambda_\prec$.
Then
\begin{equation*}
\mf(\exp(\g_\prec\otimes_\Z\F_q))=\sum_{_{(i,j)\in \Iex}} fq^{\alpha(i,j)}.
\end{equation*} 
\end{theorem}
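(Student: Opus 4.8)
The plan is to compute $\mf(\GG_q)$ for $\GG_q = \exp(\g_\prec \otimes_\Z \F_q)$ by combining the Kirillov orbit method with a direct analysis of the commutator matrix of the pattern Lie algebra. Recall (from the general machinery to be set up before this theorem) that for a $c$-step nilpotent $\F_q$-Lie algebra $\g_q$ with $c < p$, the irreducible complex representations of $\GG_q$ are parametrized by coadjoint orbits in $\g_q^\ast$, and the dimension of the representation attached to the orbit of $\lambda \in \g_q^\ast$ equals $|\g_q : \mathrm{rad}(B_\lambda)|^{1/2} = q^{\frac{1}{2}\,\rank B_\lambda}$, where $B_\lambda(x,y) := \lambda([x,y])$ is the associated alternating form on $\g_q$. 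Moreover a set $S$ of irreducible representations is faithful (its direct sum is a faithful representation) iff the intersection of their kernels is trivial, and since each kernel is normal, one reduces to covering the socle, i.e. to a condition on the restriction of the characters to the center $\ZZ(\GG_q)$. So $\mf(\GG_q)$ becomes a covering/optimization problem: choose $\lambda$'s so that the corresponding central characters generate $\widehat{\ZZ(\GG_q)}$, minimizing $\sum q^{\frac12 \rank B_\lambda}$.

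First I would identify the center and the relevant linear data. For the pattern Lie algebra, $\g_\prec' = \g_\prec^2$ is spanned by the $\e_{il}$ with $\alpha(i,l) \geq 1$, and one checks that the central quotient pairing is controlled exactly by the extreme pairs: a functional $\lambda$ supported on the coordinate $\e_{ij}^\ast$ with $(i,j) \in \Iex$ produces a form $B_\lambda$ whose radical, when $\lambda$ is generic on that coordinate, has corank $2\,\alpha(i,j)$ — indeed $\ad_{\e_{ik}}$ and $\ad_{\e_{kj}}$ pair nondegenerately into $\e_{ij}$ for each of the $\alpha(i,j)$ intermediate elements $k$, while the minimality of $i$ and maximality of $j$ guarantee no other generators interfere. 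Thus the representation attached to such a $\lambda$ has dimension $q^{\alpha(i,j)}$, and since $i$ is minimal and $j$ maximal the corresponding kernel is "small" — it misses exactly the part of the socle detected by the $(i,j)$ coordinate. This gives the upper bound $\mf(\GG_q) \leq \sum_{(i,j)\in\Iex} f q^{\alpha(i,j)}$ by choosing one such representation for each extreme pair and verifying the union of kernels is trivial. The factor $f$ appears because these are representations over $\C$ obtained by inflating characters of $\F_q \cong \F_p^f$, each contributing dimension $q^{\alpha(i,j)}$ but requiring $f$ Galois conjugates (or rather, a single $\F_q$-character restricts to give all of $\widehat{\F_q}$ only after accounting for the $\F_p$-structure) — more precisely one shows that detecting the full socle in the $(i,j)$-direction forces total dimension $\geq f q^{\alpha(i,j)}$ in that block.

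For the lower bound, which I expect to be the main obstacle, I would argue that any faithful representation must "see" every extreme-pair direction in the socle, and that the cheapest way to see direction $(i,j)$ costs at least $f q^{\alpha(i,j)}$. The key point is a decomposition of $\ZZ(\GG_q)^\vee$ (or of the socle) that is compatible with the extreme pairs, so that the covering problem decouples across $\Iex$; here one uses that distinct extreme pairs $(i,j)$ and $(i',j')$ give "independent" central directions — this requires showing the relevant coordinates of $\g_\prec'$ indexed by $\Iex$ are a basis-like family in the appropriate subquotient. Then, for a fixed extreme pair, any irreducible representation whose central character is nontrivial in the $(i,j)$-direction has $B_\lambda$ of rank at least $2\alpha(i,j)$ — this is the linear-algebra heart, using that $\lambda(\e_{ij}) \neq 0$ already forces the $2\alpha(i,j)$-dimensional symplectic block from the pairs $(\e_{ik},\e_{kj})$ to be nondegenerate regardless of the other values of $\lambda$ — so its dimension is $\geq q^{\alpha(i,j)}$, and a counting argument over the additive characters of $\F_q$ (viewed as an $\F_p$-vector space of dimension $f$) shows one needs total dimension $\geq f q^{\alpha(i,j)}$ to exhaust that direction. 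Summing the decoupled lower bounds over $\Iex$ matches the upper bound and completes the proof. Throughout, the hypothesis $p > \lambda_\prec$ is exactly what licenses the Lazard/Kirillov correspondence (nilpotency class $\lambda_\prec < p$), and the combinatorial identity $\lambda_\prec = $ nilpotency class of $\g_\prec$ from Lemma \ref{generator-comm} is used to make this translation.
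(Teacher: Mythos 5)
Your upper bound and your per-constituent rank estimate both match the paper's argument: the observation that $\lambda(\e_{ij})\neq 0$ for an extreme pair $(i,j)$ forces a nondegenerate $2\alpha(i,j)$-dimensional block on $\mathrm{Span}\{\e_{ik},\e_{kj}\,:\,i\prec k\prec j\}$, hence $\dim\rho\geq q^{\alpha(i,j)}$, is exactly Proposition~\ref{level-prop-Min}, and the faithful representation of dimension $\sum_{(i,j)\in\Iex}fq^{\alpha(i,j)}$ built from $f$ characters per extreme pair (an $\F_p$-basis of $\F_q$ in each central coordinate) is Lemma~\ref{cons-faith}.

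The gap is in the lower bound, at the step ``the covering problem decouples across $\Iex$'' and ``summing the decoupled lower bounds over $\Iex$ matches the upper bound.'' Independence of the central coordinates indexed by $\Iex$ does not make the constituents decouple: a minimal faithful representation decomposes into $f\cdot\#\Iex$ irreducibles whose central characters form an $\F_p$-basis of $\bigoplus_{(i,j)\in\Iex}\F_q$, but a single constituent may have its central character nonzero in several $(i,j)$-coordinates simultaneously, and such a constituent contributes only $q^{\max\alpha(i,j)}$ to $\dim\rho$, not a sum. Your per-direction bound (at least $f$ constituents must be nonzero in the $(i,j)$-coordinate, each of dimension at least $q^{\alpha(i,j)}$) is correct for each fixed $(i,j)$, but the sets of constituents serving different directions overlap, so adding these per-direction bounds over $\Iex$ counts the same constituent several times and is not a valid lower bound on $\dim\rho$. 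What is needed, and what your proposal does not supply, is an \emph{injective} assignment of constituents to extreme pairs such that each assigned constituent has nonzero central character in its pair's coordinate. The paper produces this in two stages: the Rado--Horn theorem (Lemma~\ref{blocks}) partitions the $f\cdot\#\Iex$ central characters, which a priori form only an $\F_p$-basis, into $f$ subsets each of which is an $\F_q$-basis of $\bigoplus_{(i,j)\in\Iex}\F_q$; then, within each such $\F_q$-basis, the Leibniz expansion of the nonzero determinant yields a permutation of the rows with all diagonal entries nonzero, i.e.\ a system of distinct representatives matching constituents to extreme pairs. Only after this matching can one sum $q^{\alpha(i,j)}$ over $\Iex$ inside each of the $f$ blocks and add the blocks to get $f\sum_{(i,j)\in\Iex}q^{\alpha(i,j)}$. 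Without some substitute for this combinatorial step, the lower bound does not follow.
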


Theorem~\ref{partial-order-Lie} generalizes~\eqref{U-al}, and its proof relies on Kirillov theory and more specifically on an explicit description of the size of coadjoint 
orbits in terms of combinatorial data of the partial order. Note that by Theorem \ref{rationality-Ax}
and Theorem \ref{APsets}, 
\emph{a priori}
 there is no 
reason to expect the faithful dimension to be given by a {\it single} formula of the form $fg(p^f)$ for a polynomial $g(T)$. 
Theorem~\ref{partial-order-Lie} is proved by
 a detailed analysis of the size of coadjoint orbits and  a combinatorial lemma due to Rado and Horn.

\begin{question}
Let $F$ be a non-Archimedean local field with the ring of integers $\mathcal{O}$, the unique maximal ideal $\mathfrak{p}$ and the associated residue field $\F_q$. Is it true that  $\mf(\exp(\g_\prec\otimes_\Z\OO{n}))$ is given by the formula
\begin{equation}\label{qn:generalization}
\sum_{\ell=0}^{ \xi-1 }\sum_{(i,j)\in \Iex} fq^{(n-\ell)\alpha(i,j)},\quad \xi=\min\{n,e\},
\end{equation}
where $f$ is the absolute inertia degree and $e$ is the absolute ramification index of $F$? Formula~\eqref{qn:generalization} is suggested by~\eqref{Heis} and Theorem~\ref{partial-order-Lie}. 
\end{question}

An immediate consequence of Theorem~\ref{partial-order-Lie} is the following corollary.
\begin{corollary} For any non-zero polynomial $g(T)\in \Z[T]$ with non-negative coefficients, there exists a nilpotent $\Z$-Lie algebra $\g$ 
which is finitely generated as an abelian group
  such that
\begin{equation*}
\mf(\GG_q)=fg(q),
\end{equation*} 
when $p\geq\deg g(T)+2$ and $f\geq 1$. 
\end{corollary}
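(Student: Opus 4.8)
The plan is to derive this directly from Theorem~\ref{partial-order-Lie} by building, for the given $g$, a partial order $\prec$ whose extreme pairs carry exactly the prescribed $\alpha$-statistics. Write $g(T)=\sum_{k=0}^{d}c_kT^k$ with $c_k\in\Z_{\geq 0}$, $c_d\neq 0$, and $d:=\deg g$ (the case $d=0$ of a nonzero constant is permitted). By Theorem~\ref{partial-order-Lie}, for any partial order $\prec$ on $[n]$ and any $q=p^f$ with $p>\lambda_\prec$ one has
\[
\mf(\exp(\g_\prec\otimes_\Z\F_q))=\sum_{(i,j)\in\Iex}fq^{\alpha(i,j)},
\]
so it suffices to exhibit a $\prec$ for which, for each $k$, there are exactly $c_k$ extreme pairs $(i,j)$ with $\alpha(i,j)=k$, and for which $\lambda_\prec\leq d+1$.

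First I would isolate the building block: for $k\geq 0$ let $C_k$ denote the chain $x_0\prec x_1\prec\cdots\prec x_{k+1}$ on $k+2$ elements. Its unique minimal and maximal elements are $x_0$ and $x_{k+1}$, and $x_0\prec x_{k+1}$, so the only extreme pair of $C_k$ is $(x_0,x_{k+1})$, with $\alpha(x_0,x_{k+1})=k$; moreover the length of $C_k$ equals $k+1$. Then I would take $([n],\prec)$ to be the disjoint union of $c_k$ copies of $C_k$ over $0\leq k\leq d$, with the ground sets relabelled so that $i\prec j$ implies $i<j$ (as we may). Since a disjoint union introduces no comparabilities between distinct components, the minimal (resp. maximal) elements of $\prec$ are precisely those of the components, any strict relation $i\prec j$ keeps $i$ and $j$ in the same component, and $\alpha(i,j)$ is computed within that component; hence $\Iex$ and the sum in Theorem~\ref{partial-order-Lie} split as a disjoint union over components, yielding
\[
\mf(\exp(\g_\prec\otimes_\Z\F_q))=\sum_{k=0}^{d}c_k\cdot fq^{k}=fg(q).
\]

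It then remains to match the arithmetic hypothesis: $\lambda_\prec=\max\{k+1:c_k\neq 0\}=d+1$, so Theorem~\ref{partial-order-Lie} applies exactly when $p>d+1$, i.e. $p\geq\deg g+2$, and then for every $f\geq 1$. Since $\g_\prec$ is a free $\Z$-module of finite rank it is finitely generated as an abelian group, and taking $\g:=\g_\prec$ (so that $\GG_q=\exp(\g_\prec\otimes_\Z\F_q)$) completes the argument. I do not anticipate any genuine obstacle here: all of the difficulty is already contained in Theorem~\ref{partial-order-Lie}, and the corollary is pure bookkeeping. The only points requiring (routine) verification are the additivity of $\Iex$ and of $\alpha$ under disjoint unions — immediate from the absence of cross-component comparabilities — and the observation that a single monomial $c_dT^d$ forces a chain on $d+2$ vertices, hence of length $d+1$, which is precisely what makes $p\geq\deg g+2$ the natural bound.
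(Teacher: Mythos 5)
Your construction is correct and is exactly the intended derivation: the paper states this corollary as an immediate consequence of Theorem~\ref{partial-order-Lie}, and the disjoint union of $c_k$ chains of length $k+1$ (each contributing a single extreme pair with $\alpha=k$) is the natural realization of $g$, with the bound $p\geq\deg g+2$ coming from $\lambda_\prec=d+1$ just as you say.
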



\subsection{Relatively free nilpotent groups}
In this section we will turn to free objects in certain categories of nilpotent Lie algebras. Let $\f_{n,c}:=\f_{n,c}(\Z)$ be the free nilpotent $\Z$-Lie algebra on $n$ generators  and of class $c$; it is defined to be the quotient algebra $\f_n/\f^{c+1}_{n}$, where $\f_n$ is the free $\Z$-Lie algebra on $n$ generators, and $\f^{c+1}_n$ denotes the $(c+1)$-th term in the lower central series of $\f_n$ starting with $\f_n^1=\f_n$. It is well-known that the rank of the quotient $\f_n^c/\f_n^{c+1}$ (as a $\Z$-module) is given by Witt's formula
\begin{equation*}
r_n(c):=\frac{1}{c}\sum_{d|c}\mu(d)n^{c/d},
\end{equation*}
where $\mu$ is the M\"{o}bius function. Using the orbit method 
one can prove that
\begin{equation*}
\mf(\exp(\f_{n,c}\otimes_\Z\F_q))\geq r_n(c)fq.
\end{equation*}
This lower bound is sharp for $\f_{n,2}$ and $\f_{n,3}$.

\begin{theorem}\label{thm:free} Let $n \ge 2$ and let $\F_q$ be the finite field with $q=p^f$ elements. Then 
\begin{itemize}
\item[1)]   $\displaystyle\mf(\exp(\f_{n,2}\otimes_\Z\F_q))=\frac{n^2-n}{2}fq$\, for $p\geq 3$. 
\item[2)] 
 $\displaystyle\mf(\exp(\f_{n,3}\otimes_\Z\F_q))=\frac{n^3-n}{3} fq$\,
for $p \ge 5$.
\end{itemize}
\end{theorem}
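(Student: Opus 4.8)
The plan is to combine the Kirillov orbit method with an explicit analysis of coadjoint orbits on the free nilpotent Lie algebras $\f_{n,2}$ and $\f_{n,3}$. Recall first the general principle that underlies all such computations: for a $p$-group $\GG_q=\exp(\g_q)$ with $p$ larger than the nilpotency class, the orbit method parameterises the irreducible complex characters by coadjoint orbits $\Or\subseteq\g_q^*$, with $\chi_\Or(1)=\sqrt{|\Or|}$, and a sum of such characters is faithful if and only if the corresponding orbits jointly separate the points of the center $\mathrm{Z}(\GG_q)$ (equivalently, their annihilators in $\mathrm{Z}(\g_q)$ intersect trivially). Thus $\mf(\GG_q)$ is the minimum of $\sum_i\sqrt{|\Or_i|}$ over all such separating families of orbits. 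The lower bound $\mf(\exp(\f_{n,c}\otimes\F_q))\geq r_n(c)fq$ quoted just before the theorem comes from the fact that $\mathrm{Z}(\f_{n,c}\otimes\F_q)$ contains $\f_{n,c}^c\otimes\F_q$, which has $\F_q$-dimension $r_n(c)$, hence $\F_p$-dimension $fr_n(c)$; any faithful sum of characters must separate the $fr_n(c)$ coordinates of this piece, and each nontrivial character entering such a sum has degree at least $q$ (its orbit has size at least $q$ since it is nontrivial on a central generator whose $\ad$-orbit\dots more precisely, because in a Kirillov orbit the functional is nonconstant along at least a two-dimensional symplectic plane through each point not killed by all brackets), so one needs at least $fr_n(c)$ of them: here $r_n(2)=\tfrac{n^2-n}{2}$ and $r_n(3)=\tfrac{n^3-n}{3}$, matching the stated formulas.

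For the matching upper bound, the heart of the argument is to exhibit an explicit family of $fr_n(c)$ coadjoint orbits, each of size exactly $q^2$ (degree exactly $q$), whose annihilators in the center intersect trivially. It is convenient to view the center as $V:=\f_{n,c}^c\otimes\F_q\cong\F_q^{r_n(c)}$, a space on which the basic Lie brackets act, and to think of $\GG_q$-orbits through a functional $\phi\in\g_q^*$ as being controlled by the radical of the alternating form $B_\phi(x,y):=\phi([x,y])$. What I would do is pick, for each basis vector $w$ of the Hall (or Lyndon) basis of $\f_{n,c}^c$, a functional $\phi_w$ supported on the line $\F_q w$ inside the top layer, arranged so that the form $B_{\phi_w}$ on $\f_{n,c}\otimes\F_q$ has a two-dimensional non-degenerate part (coming from a single bracket relation $[v_i,v_{j_1},\dots]=w$) and is otherwise zero; such a $\phi_w$ has an orbit of size exactly $q^2$. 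Restricting further over $\F_p$ (writing $\F_q=\F_p^f$ and choosing an $\F_p$-basis of the line $\F_q w$) produces $f$ functionals per basis vector, hence $fr_n(c)$ in total, and by construction the union of their central annihilators is $\bigcap_w \ker(\text{evaluation at }w)=0$. The delicate point, which is exactly where class $2$ versus class $3$ matters, is to check that for $\f_{n,3}$ a \emph{single} bracket relation with a prescribed top-degree value can be realised without forcing the functional to be nonzero on other top-degree basis elements (which would inflate the orbit or spoil the separation): this uses the structure of the degree-$3$ part of the free Lie algebra, specifically that the standard Hall basis elements $[[v_i,v_j],v_k]$ can be "probed" one at a time. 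I expect this orbit-construction-and-separation step to be the main obstacle; the rest is the Kirillov dictionary plus Witt's formula.

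Concretely, the steps in order are: (1) set up the orbit-method correspondence for $\GG_q=\exp(\f_{n,c}\otimes\F_q)$ with $p>c$, recording $\chi_\Or(1)^2=|\Or|$ and the faithfulness criterion in terms of separating the center; (2) identify the center with $\f_{n,c}^c\otimes\F_q$ and compute its $\F_p$-dimension as $fr_n(c)$ via Witt's formula, yielding the lower bound $\mf\geq fr_n(c)q$ by noting every non-central-trivial orbit has size $\geq q^2$; (3) for $c=2$, use the basis $\{[v_i,v_j]:i<j\}$ of $\f_{n,2}^2$ and for each pair $(i,j)$ and each element of an $\F_p$-basis of $\F_q$ construct the functional vanishing off $\mathrm{span}\{v_i^*,v_j^*,[v_i,v_j]^*\}$-dual directions, check its orbit has size $q^2$ and that these $\tfrac{n^2-n}{2}f$ orbits separate the center, giving the upper bound; (4) for $c=3$, do the same with a Hall basis of $\f_{n,3}^3$ of size $r_n(3)=\tfrac{n^3-n}{3}$, the extra care being to verify that each chosen functional is nonzero on only one top-basis coordinate — this is the combinatorial/Lie-theoretic crux — and then conclude. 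If isolating one top-coordinate at a time turns out to be impossible for $\f_{n,3}$, the fallback is to choose a family of orbits of size $q^2$ whose central annihilators are distinct hyperplanes in general position in the $r_n(3)$-dimensional center, which still intersect trivially once there are $r_n(3)$ of them over $\F_q$ (hence $fr_n(3)$ over $\F_p$), invoking a general-position argument of the kind used for the pattern-group theorem; with $p\geq 5$ there is enough room in $\F_q$ for this.
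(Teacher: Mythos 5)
Your overall framework---the Kirillov dictionary $\chi_\Or(1)=\sqrt{|\Or|}$, faithfulness of a sum of irreducibles being equivalent to their central characters separating $\ZZ(\GG_q)$, the lower bound $r_n(c)fq$ from Witt's formula plus the fact that every orbit nontrivial on the top layer has size at least $q^2$, and an explicit family of degree-$q$ orbits for the upper bound---is exactly the paper's strategy (packaged there as a rank-minimization over the commutator matrix). Your step (3) for $\f_{n,2}$ is correct: each bracket $[v_i,v_j]$ occurs in exactly two positions of the commutator matrix, symmetric about the diagonal, so the functional dual to a single $[v_i,v_j]$ has orbit size exactly $q^2$, and these functionals visibly separate the centre.

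The gap is in step (4). For $\f_{n,3}$ with $n\geq 3$, a functional supported on a \emph{single} Hall-basis coordinate of the top layer does \emph{not} in general give an orbit of size $q^2$. Concretely, write $x_{ijk}=[x_i,[x_j,x_k]]$ and take the Hall element $x_{213}$ (so $j=1<i=2<k=3$). The Jacobi identity gives $[x_1,[x_2,x_3]]=x_{213}-x_{321}$, so the variable dual to $x_{213}$ appears in the off-diagonal block $F_{12}$ of the commutator matrix in two positions lying in \emph{different rows and different columns} (row $2$, column $(1,3)$, and row $1$, column $(2,3)$). Hence the functional equal to $1$ on $x_{213}$ and $0$ on every other top-layer Hall element yields $\rank_{\F_q}F_{12}=2$, i.e.\ an orbit of size $q^4$ and degree $q^2$ --- precisely the inflation you flagged as the danger. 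So ``probing one top coordinate at a time'' is impossible, and the annihilators cannot be taken to be coordinate hyperplanes. The paper's fix is to support each such functional on up to \emph{three} top-layer coordinates (for $j<i<k$, set the coefficients of $x_{ijk}$, $x_{iik}$ and $x_{jjk}$ all equal to $1$, with a sign adjustment in the case $j<k<i$), which makes the two nonzero rows of $F_{12}$ proportional and restores rank $1$; it then requires a separate case analysis, based on a classification of exactly where each variable occurs in $F_{12}$, to show that the resulting $r_n(3)$ central restrictions are linearly independent over $\F_q$. Your fallback (``annihilators in general position'') correctly identifies linear independence of the central restrictions as the separation criterion, but it does not address the actual difficulty, which is producing degree-$q$ orbits at all; ``enough room in $\F_q$ for $p\geq 5$'' is not a substitute for this construction.
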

The proofs of these results involve explicit computations with Hall bases and rely on a subtle combinatorial optimization.

\begin{remark}
\label{thm:f2c}
For $2\leq c\leq 6$, the value of $\mf(\exp(\f_{2,c}\otimes_\Z\F_p))$
is given by Table \ref{Table1} below.

\begin{table}[h!]
\centering
\[
\begin{array}{|| c c c ||}
\hline
c &\text{condition} & \mf(\exp(\f_{2,c}\otimes_\Z\F_p)) \Tstrut\Bstrut\\
\hline
2 & p\geq 3 & p\Tstrut\Bstrut\\
3 & p\geq 5 & 2p\Tstrut\Bstrut\\
4 & p\geq 5 & 3p\Tstrut\Bstrut\\
5 & p\geq 7 & 2p^2+4p\Tstrut\Bstrut\\
6 & p\geq 7 & p^3+3p^2+5p\Tstrut\Bstrut\\
\hline
\end{array}
\]
\caption{}
\label{Table1}
\end{table}
In section~\ref{subsec-outlineTable}, we outline the computations that yield the values of 
$\mf(\exp(\f_{2,c}\otimes_\Z\F_p))$ in Table \ref{Table1}.
However, we are not able to obtain any general formula for 
$\mf(\exp(\f_{2,c}\otimes_\Z\F_p))$
in terms of $c$.

\end{remark}

For a Lie algebra $\g$, 
let $\left(D^k\g\right)_{k\geq 0}$ be the derived series of $\g$. Thus $D^0\g=\g$, 
$D\g=[\g, \g]$, and $D^{k+1}\g= [D^k \g, D^k \g]$ for $k\geq 1$. Note that $\g/D^2\g$ is the largest metabelian 
quotient of $\g$. When $\g= \f_{n,c}$, this quotient is called the free metabelian Lie algebra of class $c$ on $n$ generators and will be denoted by 
$\m_{n,c}$.

\begin{theorem}\label{thm:free-meta} 
Let $c \ge 2$.  Then  
$
\mf(\exp(\m_{2,c}\otimes_\Z\F_q))=(c-1)fq
$\,
for $q:=p^f$ with $p>c$.
\end{theorem}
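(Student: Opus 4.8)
The plan is to combine Kirillov's orbit method with the explicit module structure of the derived algebra of $\g:=\m_{2,c}$. Let $x,y$ be the generators, put $w:=[x,y]$, and set $X:=\ad_x$, $Y:=\ad_y$. Since $\g$ is metabelian, $X$ and $Y$ commute on $\g':=[\g,\g]$, so $\g'$ is a cyclic $\F_q[X,Y]$-module generated by $w$; in the free metabelian case one has the classical isomorphism $\g'\cong\F_q[X,Y]/(X,Y)^{c-1}$ (equivalently, the weight-$d$ component $\g^{(d)}$ has $\F_q$-dimension $d-1$ for $2\le d\le c$, with $\g^{(1)}=\langle x,y\rangle$). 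Hence the center $\mathfrak z:=Z(\g_q)$ is precisely the top weight component $\g^{(c)}$, namely the socle $\langle X^aY^bw:a+b=c-2\rangle$ of the module, of $\F_q$-dimension $c-1$; since $p>c$ we get $Z(\GG_q)=\exp(\mathfrak z)\cong(\F_q^{\,c-1},+)$, whose character group is isomorphic to $\F_p^{(c-1)f}$.

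By the orbit method every irreducible representation of $\GG_q$ has dimension $q^{\rank(B_\xi)/2}$ for the commutator form $B_\xi(u,v):=\xi([u,v])$ of a point $\xi$ in the associated coadjoint orbit, and a direct sum of irreducibles is faithful exactly when the central characters — which on $\mathfrak z$ are $z\mapsto\psi(\xi(z))$ for a fixed nontrivial additive character $\psi$ of $\F_q$ — generate $\widehat{Z(\GG_q)}$. This yields the lower bound at once: generating $\widehat{Z(\GG_q)}\cong\F_p^{(c-1)f}$ needs at least $(c-1)f$ orbits with nontrivial central character; for any such orbit $\xi|_{\mathfrak z}\neq 0$, and since $\mathfrak z\subseteq\g'$ this forces $B_\xi\neq 0$, so $\rank(B_\xi)\ge 2$ and the corresponding irreducible has dimension $\ge q$. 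Therefore $\mf(\GG_q)\ge(c-1)fq$.

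For the matching upper bound, given $\mu\in\mathfrak z^{*}$ I would take $\xi_\mu\in\g_q^{*}$ supported on $\mathfrak z$ with $\xi_\mu|_{\mathfrak z}=\mu$ and compute $B_{\xi_\mu}$. Because $\xi_\mu$ kills $\g^{(2)},\dots,\g^{(c-1)}$ while $[\g',\g']=0$, the only surviving pairings are between $\g^{(1)}=\langle x,y\rangle$ and $\g^{(c-1)}$, with $B_{\xi_\mu}(x,w')=\mu(Xw')$ and $B_{\xi_\mu}(y,w')=\mu(Yw')$; hence (for $c\ge 3$; the case $c=2$ is the Heisenberg algebra and is already covered) $\rank(B_{\xi_\mu})=2\,\rank(T_\mu)$, where $T_\mu=(\mu\circ X,\mu\circ Y)\colon\g^{(c-1)}\to\F_q^{2}$. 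A short computation shows that $\rank(T_\mu)=1$ precisely when $\ker\mu$ equals the image in $\g^{(c)}$ of multiplication by some linear form $X-\lambda Y$ (or $Y$) from $\g^{(c-1)}$ to $\g^{(c)}$; identifying $\g^{(c)}$ with degree-$(c-2)$ binary forms, those $\mu$ are exactly the points of the affine cone $\mathcal C$ over the rational normal curve of degree $c-2$ in $\mathbb{P}(\mathfrak z^{*})=\mathbb{P}^{c-2}$, i.e.\ the scalar multiples of $(1,\lambda,\dots,\lambda^{c-2})$. Since $p>c$ forces $q\ge c-1$, a Vandermonde argument shows $\mathcal C$ spans $\mathfrak z^{*}$ over $\F_q$, and since $\mathcal C$ is stable under scaling one can choose $(c-1)f$ functionals in $\mathcal C$ whose associated characters generate $\widehat{Z(\GG_q)}$. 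The corresponding rank-two coadjoint orbits yield $(c-1)f$ irreducibles, each of dimension $q$, whose direct sum is faithful, so $\mf(\GG_q)\le(c-1)fq$, and the theorem follows.

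The main difficulty is the last paragraph: determining exactly which central functionals $\mu$ carry a rank-two orbit and recognizing this locus as a rational normal curve, so that the inequality $q\ge c-1$ (guaranteed by $p>c$) makes it $\F_q$-spanning — this is precisely what forces $\mf(\GG_q)$ down to the minimal value $(c-1)fq$ rather than growing with a higher power of $q$. The other ingredients — the module description of $\g'$, the orbit-method dictionary, the faithfulness/generation criterion, and the lower bound — are standard or immediate.
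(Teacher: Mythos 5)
Your proposal is correct and follows essentially the same route as the paper: the paper likewise reduces to the $2\times(c-2)$ Hankel matrix $\left(\begin{smallmatrix}T_1&\cdots&T_{c-2}\\T_2&\cdots&T_{c-1}\end{smallmatrix}\right)$ coming from $[x_i,y^{c-1}_\ell]$, observes that its rank is at least $1$ on any nonzero central functional (giving the lower bound $(c-1)fq$) and equal to $1$ exactly on the Veronese points $(1,\lambda,\dots,\lambda^{c-2})$, and uses a Vandermonde matrix with $c-1$ distinct $\lambda_i\in\F_q$ (available since $q\geq p>c$) to realize the bound. Your module-theoretic description of $\g'$ and the rational-normal-curve interpretation are exactly what the paper's concluding remark points out, so the two arguments differ only in presentation.
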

 
In the course of the proof of Theorem
\ref{thm:free-meta} 
we will see
that computing the faithful dimension of $\exp(\m_{2,c}\otimes_\Z\F_q)$ is linked to {\it rational normal curves}, that is,  the image of the {\it Veronese map} given by 
$$
\nu_{c-2}: \mathbf{P}^1(\F_q)\to \mathbf{P}^{c-2}(\F_q),\quad  [X_0:X_1]\mapsto [X_0^{c-2}: X_0^{c-3}X_1:\dots:X_1^{c-2}].
$$    
This suggests that other tools (e.g. from the theory of
determinental varieties)
might be relevant in the more general situation.

\section{Preliminaries}\label{pre}
In this section we introduce some notation and prove a number of basic facts which will be used throughout this paper. In particular, we will explain the connection between faithful representations and the central characters of their irreducible components. We will also briefly recall the orbit method.

\subsection*{Notation}\label{Notation} 
Let $\G$ be a group with the identity element $\1$. The centre and the commutator subgroup of $\G$ will be denoted, respectively, by $\ZZ(\G)$ and $\G'=[\G,\G]$.
For an abelian $p$-group $\G$, we write 
$$
\Omega_1(\G):=\{g\in \G: g^p=\1\},
$$
which is a $\FZ$-vector space. The Pontryagin dual of an abelian group $A$, i.e. $\Hom(A, \C^{\ast})$, will be denoted by $\widehat{A}$. Evidently, when $A$ is an elementary abelian $p$-group,  $\widehat{A}$ has a canonical $\FZ$-vector space structure. 
We denote the cardinality of a set $S$ by $\#S$.

\subsection{Central characters of faithful representations of $p$-groups} 

Let $A$ be a finite abelian group. We denote the minimal number of generators of $A$ by $d(A)$.  
For an exact sequence   of finite abelian groups $
0\to A_1 \to A\to A_2\to 0$, the numbers $d(A)$, $d(A_1)$ and $d(A_2)$ satisfy the inequalities
\begin{equation}\label{d(A)}
\max\{d(A_i): i=1,2\}\leq d(A)\leq d(A_1)+d(A_2).
\end{equation} 
The number of invariant factors of $A$ will be denoted by $d'(A)$. It can be easily seen from elementary divisor theory that $d'(A)=d(A)$.  
Evidently  $\mf(A)\leq d'(A)$. Now for a given faithful representation $\rho: A\to \GL_m(\C)$, by decomposing $\rho$ into irreducible components and applying~\eqref{d(A)} we obtain $d(A)=d'(A)\leq \mf(A)$. 
This implies that  
$$\mf(A)=d(A)=d'(A).$$
In particular, we obtain the following lemma.
\begin{lemma}\label{Tensor-pabelian-Lemma} 
For a finite abelian $p$-group $A$, 
\begin{equation*}
d(A)=d'(A)=\mf(A)=\dim_{\FZ}(A\otimes_{\Z} \FZ)=\dim_{\FZ}(\Omega_1(A)).
\end{equation*}
\end{lemma}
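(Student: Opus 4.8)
The plan is to reduce everything to the structure theorem for finite abelian $p$-groups and read off all five quantities from a single cyclic decomposition. By the paragraph preceding the statement we already know that $d(A)=d'(A)=\mf(A)$, so it suffices to identify this common value with $\dim_{\FZ}(A\otimes_\Z\FZ)$ and with $\dim_{\FZ}(\Omega_1(A))$.

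First I would fix an isomorphism $A\cong\bigoplus_{i=1}^{d}\Z/p^{a_i}\Z$ with $a_1\geq\cdots\geq a_d\geq 1$; here $d=d'(A)$, since for a $p$-group the primary decomposition and the invariant-factor decomposition involve the same number of cyclic summands (this is the uniqueness clause of the structure theorem). Tensoring is additive and right exact, so $A\otimes_\Z\FZ\cong\bigoplus_{i=1}^d(\Z/p^{a_i}\Z)\otimes_\Z(\Z/p\Z)$, and each summand is $\Z/p^{a_i}\Z$ modulo its $p$-multiples, hence $\cong\Z/p\Z$; therefore $\dim_{\FZ}(A\otimes_\Z\FZ)=d$. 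Likewise the $p$-torsion functor $\Omega_1(-)$ commutes with finite direct sums, and $\Omega_1(\Z/p^{a_i}\Z)=p^{a_i-1}\Z/p^{a_i}\Z\cong\Z/p\Z$, so $\Omega_1(A)\cong\bigoplus_{i=1}^d\Z/p\Z$ and $\dim_{\FZ}(\Omega_1(A))=d$. Combining these with $d(A)=d'(A)=\mf(A)=d$ yields the lemma.

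The argument is pure bookkeeping, so there is no real obstacle; the only point requiring a moment's care is that the integer $d$ attached to $A$ is genuinely intrinsic, i.e. independent of the chosen cyclic decomposition and equal to $d'(A)$. If one wished to avoid invoking the uniqueness part of the structure theorem, one could instead argue directly: $A\otimes_\Z\FZ=A/pA$ and $\Omega_1(A)=\{g\in A:g^p=\1\}$ are $\FZ$-vector spaces whose dimensions coincide because multiplication by $p$ on $A$, written in Smith normal form over $\Z$, has kernel and cokernel of equal size; the inequality $\mf(A)\geq\dim_{\FZ}\Omega_1(A)$ then follows by restricting a faithful representation to $\Omega_1(A)$, decomposing into characters, and noting these characters must separate the points of $\Omega_1(A)$, while $\mf(A)\leq d'(A)$ is immediate from the diagonal character embedding. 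I would, however, present the structure-theorem version above as the cleaner route.
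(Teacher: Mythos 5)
Your proposal is correct and follows essentially the same route as the paper: the paper derives $d(A)=d'(A)=\mf(A)$ in the paragraph preceding the lemma (via $\mf(A)\leq d'(A)$ and the subadditivity inequality for $d$ applied to a decomposition of a faithful representation into characters) and then treats the identification of this common value with $\dim_{\FZ}(A\otimes_\Z\FZ)$ and $\dim_{\FZ}(\Omega_1(A))$ as immediate from elementary divisor theory, which is exactly the cyclic-decomposition bookkeeping you carry out explicitly. Your closing remark on avoiding the uniqueness clause of the structure theorem is a nice bonus but not needed.
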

Let $E$ be a finite elementary abelian $p$-group equipped with the canonical $\FZ$-vector space structure. Every one-dimensional representation $ \chi: E \to \C^{\ast}$ factors uniquely as $ \chi=  \epsilon\circ \chi_\circ$, where $ \chi_\circ \in \Hom(E, \FZ)$ and the embedding $ \epsilon:\FZ\to \C^*$ is defined by \[
\epsilon(x+p\Z)=\exp((2\pi i x)/p).
\] Hence the $\FZ$-linear map
\begin{equation}\label{iso-Omei}
\widehat{E}\rightarrow \Hom(E,\FZ)\ ,\  \chi\mapsto \chi_\circ,
\end{equation}
provides an isomorphism of $\FZ$-vector spaces between 
$\widehat{E}$ and $\Hom(E, \FZ)$. 
Now, let $\G$ be a finite $p$-group. Applying~\eqref{iso-Omei} to $\Omega_1(\ZZ(G) )$, we  obtain the $\FZ$-isomorphism 
\begin{equation}\label{iso-Ome}
\Hom(\Omega_1(\ZZ(\G)),\C^*)\rightarrow \Hom(\Omega_1(\ZZ(\G)),\FZ).
\end{equation}
Hereafter the $\FZ$-vector space $\mathrm{Hom}(\Omega_1(\ZZ(\G)),\C^*)$ will be denoted by $\widehat{\Omega}_1(\ZZ(\G))$.

\begin{remark}\label{fact=Remark}
Recall the standard fact that for a finite $p$-group $\G$, every non-trivial normal subgroup of $\G$ intersects  $\ZZ(\G)$ and hence $\Omega_1(\ZZ(\G))$ non-trivially. Consequently, a representation of $\G$ is faithful if and only if its restriction to $\Omega_1(\ZZ(\G))$ is faithful.
\end{remark}
 We recall the following simple lemma. 
\begin{lemma}\label{Linear-map} Let $L, L_1,\dots, L_n$ be linear functionals on a vector space $V$ 
with respective null spaces $N$, $N_1,\dots, N_n$. Then $L$ is a linear combination of 
$L_1,\dots, L_n$ if and only if $N$ contains the intersection $N_1\cap\dots\cap N_n$.
\end{lemma}
The following observation, due to Meyer and Reichstein~\cite{ReichsteinI}, will play a crucial role in computing the faithful dimension of $p$-groups.
\begin{lemma}\label{central-span-faith} Let $\G$ be a finite $p$-group and let $(\rho_i,V_i)_{1\leq i\leq n}$ be a family of irreducible representations of $\G$ with  central characters $\chi_i$. Assume that the set of characters 
\[
\left\{\restr{{\chi_i}}{\Omega_1(\ZZ(\G))}: 1\leq i\leq n\right\}
\] spans $\widehat{\Omega}_1(\ZZ(\G))$. Then 
$\bigoplus_{1\leq i\leq n}\rho_i$
 is a faithful representation of $\G$.  
\end{lemma}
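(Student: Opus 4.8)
The plan is to reduce the faithfulness of $\bigoplus_i \rho_i$ to a statement about the restriction to $\Omega_1(\ZZ(\G))$, using Remark~\ref{fact=Remark}, and then to translate the spanning hypothesis on central characters into the triviality of a certain kernel via Lemma~\ref{Linear-map}. First I would invoke Remark~\ref{fact=Remark}: a representation of the finite $p$-group $\G$ is faithful if and only if its restriction to $\Omega_1(\ZZ(\G))$ is faithful, because every non-trivial normal subgroup meets $\Omega_1(\ZZ(\G))$ non-trivially. So it suffices to show that the kernel of $\rho := \bigoplus_{1\le i\le n}\rho_i$ intersected with $\Omega_1(\ZZ(\G))$ is trivial.

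Next I would fix an element $z \in \Omega_1(\ZZ(\G))$ lying in $\ker\rho$, and aim to show $z = \1$. Since each $\rho_i$ is irreducible and $z$ is central, Schur's lemma forces $\rho_i(z)$ to be a scalar, namely $\chi_i(z)$ times the identity, where $\chi_i$ is the central character of $\rho_i$. Thus $z \in \ker\rho$ means $\chi_i(z) = 1$ for all $i$, i.e. $z$ lies in the common kernel of the restricted central characters $\restr{\chi_i}{\Omega_1(\ZZ(\G))}$, viewed as elements of $\widehat{\Omega}_1(\ZZ(\G)) = \Hom(\Omega_1(\ZZ(\G)),\C^*)$. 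Under the $\FZ$-vector space structure on $\widehat{\Omega}_1(\ZZ(\G))$ recorded in~\eqref{iso-Ome}, these restricted characters correspond to linear functionals on the $\FZ$-vector space $\Omega_1(\ZZ(\G))$, and the kernel of a character as a group homomorphism is precisely the null space of the corresponding functional.

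The heart of the argument is then Lemma~\ref{Linear-map}: the hypothesis says that $\{\restr{\chi_i}{\Omega_1(\ZZ(\G))}\}$ spans $\widehat{\Omega}_1(\ZZ(\G))$, so every linear functional on $\Omega_1(\ZZ(\G))$ is a linear combination of the functionals attached to the $\chi_i$. By Lemma~\ref{Linear-map} (applied with $L$ ranging over all functionals), the intersection of the null spaces $\bigcap_i \ker(\restr{\chi_i}{\Omega_1(\ZZ(\G))})$ is contained in the null space of every functional, hence is contained in $\bigcap_{L} \ker L = 0$. Therefore $z = \1$, which shows $\ker\rho \cap \Omega_1(\ZZ(\G)) = \{\1\}$, and by Remark~\ref{fact=Remark} we conclude that $\rho$ is faithful. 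The only mild subtlety — and the step I would be most careful about — is the bookkeeping between the multiplicative picture (characters, kernels of group homomorphisms) and the additive/linear-algebra picture (functionals, null spaces) provided by the isomorphism~\eqref{iso-Ome}; once that dictionary is set up cleanly, the argument is immediate, so there is no serious obstacle here.
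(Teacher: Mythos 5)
Your proposal is correct and follows essentially the same route as the paper's proof: reduce to $\Omega_1(\ZZ(\G))$ via Remark~\ref{fact=Remark}, identify the kernel of the restricted representation with $\bigcap_i\ker\restr{\chi_i}{\Omega_1(\ZZ(\G))}$ (the Schur's lemma step you spell out is left implicit in the paper), and deduce triviality of that intersection from the spanning hypothesis via the isomorphism \eqref{iso-Ome} and Lemma~\ref{Linear-map}. No gaps.
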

\begin{proof}
Since the set $\left\{\restr{{\chi_i}}{\Omega_1(\ZZ(\G))}: 1\leq i\leq n\right\}$ spans $\widehat{\Omega}_{1}(\ZZ(\G))$, from  the $\FZ$-isomorphism \eqref{iso-Ome} 
and Lemma~\ref{Linear-map} 
we obtain \[
\bigcap_{i=1}^n\ker \restr{{\chi_i}}{\Omega_1(\ZZ(\G))}=\{\1\}.
\] Hence $\bigoplus_{1\leq i\leq n}\rho_i$ is a faithful representation of $\Omega_1(\ZZ(\G))$. Remark~\ref{fact=Remark} implies that $\bigoplus_{1\leq i\leq n}\rho_i$ is a faithful representation of $\G$. 
\end{proof}
\begin{lemma}\label{Meyer} Let $\G$ be a finite $p$-group and let $\rho$ be a faithful representation of $\G$ with the smallest possible dimension. Then $\rho$ decomposes as a direct sum of exactly $r:=d(\ZZ(\G))$ irreducible representations
\begin{equation*}
\rho=\rho_1\oplus\dots\oplus\rho_r.
\end{equation*}
Therefore the set of central characters $\left\{\restr{{\chi_i}}{\Omega_1(\ZZ(\G))}: 1\leq i\leq r\right\}$ is a basis of $\widehat{\Omega}_1(\ZZ(\G))$.
\end{lemma}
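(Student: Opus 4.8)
~\textbf{Plan of proof of Lemma~\ref{Meyer}.}
The key is to combine Lemma~\ref{central-span-faith} with a dimension/counting argument against $d(\ZZ(\G))$. First I would recall that, by Lemma~\ref{Tensor-pabelian-Lemma}, $d(\ZZ(\G))=\dim_{\FZ}\widehat{\Omega}_1(\ZZ(\G))$; call this number $r$. Write $\rho=\rho_1\oplus\cdots\oplus\rho_s$ as a direct sum of irreducibles, with central characters $\chi_1,\dots,\chi_s$. Since $\rho$ is faithful, Remark~\ref{fact=Remark} says its restriction to $\Omega_1(\ZZ(\G))$ is faithful, so $\bigcap_{i=1}^s\ker\restr{\chi_i}{\Omega_1(\ZZ(\G))}=\{\1\}$; by Lemma~\ref{Linear-map} (applied in the $\FZ$-vector space $\Omega_1(\ZZ(\G))$, dualized via the isomorphism~\eqref{iso-Ome}), this forces $\{\restr{\chi_i}{\Omega_1(\ZZ(\G))}:1\leq i\leq s\}$ to span $\widehat{\Omega}_1(\ZZ(\G))$, and hence $s\geq r$.

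Next I would show $s\leq r$, i.e. that $s=r$ by minimality. Suppose $s>r$. Then the spanning set above is not a basis, so some $\restr{\chi_j}{\Omega_1(\ZZ(\G))}$ lies in the span of the others; after reindexing, say $\{\restr{\chi_i}{\Omega_1(\ZZ(\G))}:1\leq i\leq s,\ i\neq j\}$ still spans $\widehat{\Omega}_1(\ZZ(\G))$. By Lemma~\ref{central-span-faith}, $\bigoplus_{i\neq j}\rho_i$ is then already a faithful representation of $\G$, of strictly smaller dimension than $\rho$ (each $\rho_i$ has dimension $\geq 1$). This contradicts minimality of $\rho$. Therefore $s=r$, which is the first assertion. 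For the last sentence: we now have $r$ vectors $\restr{\chi_i}{\Omega_1(\ZZ(\G))}$ spanning the $r$-dimensional space $\widehat{\Omega}_1(\ZZ(\G))$, and a spanning set of size equal to the dimension is a basis.

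The only mild subtlety — and the step I would be most careful about — is the bookkeeping in passing between characters of $\Omega_1(\ZZ(\G))$ valued in $\C^\ast$ and $\FZ$-linear functionals, so that Lemma~\ref{Linear-map} genuinely applies: one must use that $\Omega_1(\ZZ(\G))$ is an $\FZ$-vector space, that restriction of a central character to it lands in $\widehat{\Omega}_1(\ZZ(\G))$, and that under the isomorphism~\eqref{iso-Ome} ``spanning'' and ``common kernel trivial'' are dual conditions. Everything else is a short counting argument, so there is no serious obstacle; the lemma is essentially an optimality packaging of Lemmas~\ref{Linear-map} and~\ref{central-span-faith}.
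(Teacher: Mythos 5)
Your argument is correct and follows essentially the same route as the paper: both derive $\bigcap_i\ker\restr{\chi_i}{\Omega_1(\ZZ(\G))}=\{\1\}$ from faithfulness, use Lemma~\ref{Linear-map} to get spanning (hence the lower bound $s\geq r$), and use Lemma~\ref{central-span-faith} together with minimality to rule out $s>r$ and conclude the restrictions form a basis. Your write-up merely makes explicit the discarding-a-dependent-character step that the paper leaves implicit.
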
 
\begin{proof}
Let $\rho=\bigoplus_{1\leq i\leq n}\rho_i$
be the decomposition of $\rho$, and let $\chi_i$, $1\leq i\leq n$, denote the central character of $\rho_i$.  Since $\rho$ is faithful and $r=d(\ZZ(\G))$, it follows that $n\geq r$. 
Furthermore, faithfulness of  $ \rho$ also implies  $\bigcap_{i=1}^n\ker\chi_i=\{\1\}$. Hence, from Lemma~\ref{Linear-map}, Lemma~\ref{central-span-faith}, and the minimality of $\dim(\rho)$ it follows that $n=r$ and also that 
the set \[
\left\{\restr{{\chi_i}}{\Omega_1(\ZZ(\G))}: 1\leq i\leq r\right\}
\] is a basis of $\widehat{\Omega}_1(\ZZ(\G))$.
\end{proof}

\subsection{Kirillov's orbit method} The orbit method was introduced by Kirillov~\cite{Kirillov} to study unitary representations of simply connected nilpotent Lie
groups. For such a group $\G$ with Lie algebra $\g$, this method provides an explicit bijection between the unitary dual $\widehat{\G}$ of $\G$, and the set $\Hom^\mathrm{cont}(\g,\C^*)/\G$ of orbits of
the induced action of $\G$ on $\Hom^\mathrm{cont}(\g,\C^*)$, called the coadjoint orbits. Since Kirillov's work, this method has been extended to study representations of nilpotent groups in other contexts. Relevant to this work is the version applicable to finite $p$-groups, which we now briefly explain. For more details we refer the reader to~\cite{Boyarchenko}. Let $\G$ be a $p$-group of nilpotency class $c<p$. 
By the Lazard correspondence, there exists a unique finite $\Z$-Lie algebra 
$\g:=\mathrm{Lie}(\G)$
of cardinality $|\G|$ and nilpotency class $c$  such that $\G\cong\exp(\g)$. Note that in the definition of $\exp(\g)$, the underlying set of the group is $\g$ and the multiplication law is defined by the Campbell-Baker-Hausdorff formula. Usually we  identify the underlying sets of the group $\G$ and the Lie algebra $\g$. A simple application of 
the Campbell-Baker-Hausdorff formula shows that in this identification the centre of $\g$ (as a Lie algebra) will be mapped onto the centre of $\G$ as a group. 

Consider now the coadjoint  action of $\G$ on $\widehat{\g}:=\Hom_\Z(\g,\C^*)$,
defined  by
$$\theta^x(y):=\theta\left(\sum_{n=0}^c \frac{\ad_x^n(y)}{n!}\right), $$
where $x,y\in \g$ and $\theta\in \widehat{\g}$. 
Note that since $p>c$, the sum is  well-defined. 

\begin{theorem}\label{Kirillov}
Assume that $p\geq 3$ and let $\G$ be a $p$-group of nilpotency class $c<p$. Furthermore, assume that $\g =\mathrm{Lie}(\G)$. Then there exists a bijection between $\G$-orbits $\Theta\subseteq \widehat{\g}$ and  irreducible representations $\rho_\Theta\in \widehat{\G}$ such that Kirillov's character formula holds:
\begin{equation*}
\chi_\Theta(x):=\chi_{\rho_\Theta}(x)=|\Theta|^{-1/2}\sum_{\theta\in\Theta}\theta(x).
\end{equation*}
\end{theorem}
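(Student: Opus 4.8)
The plan is to establish the bijection and the character formula together by reducing the problem to the well-understood case of finite abelian $p$-groups via a dimension (filtration) induction on the nilpotency class $c$, following the standard ``orbit method for finite $p$-groups'' argument but being careful with the factors of $|\Theta|^{1/2}$ coming from the Lazard correspondence. First I would record the basic structural facts: the coadjoint action is well defined (this is noted in the excerpt since $p>c$), it is an action by group automorphisms of $\widehat{\g}$, and it is unipotent, so every orbit $\Theta$ has size a power of $p$; moreover $\widehat{\g}$ is in natural $\G$-equivariant bijection with the set of unitary characters of the abelian group $(\g,+)$, and the number of $\G$-orbits equals the number of conjugacy classes of $\G$, which equals $|\widehat{\G}|$. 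This last point is the ``counting'' half: if I can produce, for each orbit $\Theta$, an irreducible representation $\rho_\Theta$ of the claimed dimension $|\Theta|^{1/2}$, with distinct orbits giving inequivalent representations, then $\sum_\Theta |\Theta| = |\g| = |\G| = \sum_{\rho\in\widehat\G}(\dim\rho)^2$ forces the map $\Theta\mapsto\rho_\Theta$ to be a bijection. So the real content is the \emph{construction} of $\rho_\Theta$ together with its character formula.

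For the construction I would argue by induction on $|\g|$ (equivalently on $\dim_{\F_p}$ after base change, but here $\g$ is just a finite nilpotent $\Z$-Lie algebra of class $<p$). If $\g$ is abelian then $\G=\exp(\g)=(\g,+)$, every orbit is a single point $\{\theta\}$, $\rho_\Theta=\theta$ viewed as a character of $\G$, and the character formula $\chi_\Theta(x)=\theta(x)$ is trivially $|\Theta|^{-1/2}\sum_{\theta'\in\Theta}\theta'(x)$. If $\g$ is non-abelian, pick a central element $z\in\ZZ(\g)$ of order $p$ that lies in $\g'$ (possible since $\g'\cap\ZZ(\g)\neq 0$ for a nilpotent Lie algebra and we may pass to its $p$-torsion), and fix the orbit $\Theta$ with representative $\theta$. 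If $\theta(z)=1$, then $\Theta$ is really an orbit for the quotient Lie algebra $\bar\g=\g/(\Z z)$, whose group $\exp(\bar\g)$ is a proper quotient of $\G$, and I pull back $\rho_\Theta$ from there; the character formula is inherited verbatim because inflation does not change character values and orbit sizes match. If $\theta(z)\neq 1$, I want to produce $\rho_\Theta$ by induction from a codimension-one ``polarizing'' subalgebra: choose $x\in\g$ with $\theta(\ad_x(\cdot))$ nontrivial, let $\h=\ker\big(y\mapsto \theta([x,y])\big)$, which is a subalgebra of codimension $1$ in $\g$ containing $\ZZ(\g)$ and containing $x$'s centralizer data, set $H=\exp(\h)$, let $\psi$ be the linear character of $H$ attached (by the inductive construction applied to $\h$ and the restricted functional $\theta|_\h$ sitting inside its own orbit), and put $\rho_\Theta:=\mathrm{Ind}_H^G(\rho_{\Theta'})$ where $\Theta'$ is the $H$-orbit of $\theta|_\h$. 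One checks irreducibility via Mackey's criterion (the stabilizer computation is exactly that $H$ is the radical of the alternating form $b_\theta(a,b)=\theta([a,b])$ modulo the centre), that $\dim\rho_\Theta = p\cdot\dim\rho_{\Theta'} = p\cdot|\Theta'|^{1/2}=|\Theta|^{1/2}$ since the orbit grows by the factor $[\g:\h]=p$, and that the induced character, evaluated by the usual coset sum, telescopes into $|\Theta|^{-1/2}\sum_{\theta'\in\Theta}\theta'$ once one matches up $G$-orbits with $G/H$-translates of $H$-orbits. The Campbell--Baker--Hausdorff multiplication and the normalization of the coadjoint action enter precisely in verifying that conjugation of characters of $H$ by coset representatives of $G/H$ traces out the larger $\g$-orbit.

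Finally I would check the two remaining assertions: that inequivalent orbits give inequivalent representations (central characters already separate the $\theta(z)\neq 1$ cases from each other along the centre, and the inductive hypothesis on the quotient handles $\theta(z)=1$), and that every irreducible arises this way (forced by the dimension count above, or directly by an orbit-versus-conjugacy-class bijection). The hypothesis $p\geq 3$ is used so that the square root $|\Theta|^{1/2}$ is an honest positive power of $p$ compatible with the halved exponent convention, and $c<p$ is used throughout so that $\exp$, the CBH formula, and the coadjoint exponential series are all defined and invertible. The main obstacle, and the step I would spend the most care on, is the polarization/induction step when $\theta(z)\neq 1$: verifying that $\h$ is genuinely a subalgebra (this uses that $b_\theta$ is a Lie-algebra $2$-cocycle, i.e.\ the invariance $\theta([[a,b],c]+\text{cyclic})=0$, which is where nilpotency and $c<p$ are silently invoked), that its group $H=\exp(\h)$ is a subgroup of $\G$ of index $p$, and that the Mackey irreducibility criterion applies — all of which are the finite-$p$-group analogues of Kirillov's original arguments but require the CBH bookkeeping to be done honestly rather than waved at. An alternative to spelling this out would be to cite Boyarchenko (already referenced in the excerpt as \cite{Boyarchenko}) or \cite{Kazhdan} for the statement and only record the normalization of $|\Theta|^{1/2}$; but a self-contained induction as above is the cleanest route.
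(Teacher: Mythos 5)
The paper offers no argument for this theorem at all---its ``proof'' is the single line ``See \cite[Theorem 2.6]{Boyarchenko}''---so your self-contained induction is necessarily a different route; the fallback you mention in your last sentence (just cite Boyarchenko--Sabitova or Kazhdan) is exactly what the authors do. The skeleton of your argument (count orbits against conjugacy classes, inflate from the quotient when $\theta$ kills a central element of order $p$, otherwise induce from an index-$p$ subgroup) is the standard and essentially correct architecture of the finite orbit method.

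There is, however, a genuine gap at the step you yourself flag as the critical one. For an arbitrary $x\in\g$ with $\theta\circ\ad_x$ nontrivial, the set $\mathfrak{h}=\{y\in\g:\theta([x,y])=1\}$ need \emph{not} be a subalgebra, and the justification you offer (the cyclic identity $\theta([[a,b],c])\,\theta([[b,c],a])\,\theta([[c,a],b])=1$, which is just Jacobi pushed through $\theta$) does not repair this: for $y_1,y_2\in\mathfrak{h}$, Jacobi gives $\theta([x,[y_1,y_2]])=\theta([y_1,[x,y_2]])\,\theta([y_2,[x,y_1]])^{-1}$, and membership of $y_1,y_2$ in $\mathfrak h$ controls $\theta([x,y_i])$ but says nothing about these terms. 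Concretely, in the free class-$3$ Lie algebra on $a,b$ over $\F_p$ ($p\geq 5$), with basis $a,b,c=[a,b],d=[a,c],e=[b,c]$, take $\theta$ trivial on $c$ and $e$ but nontrivial on $d$, and take $x=a$ (a legitimate choice by your prescription, since $\theta([a,c])\neq 1$): then $\mathfrak h$ is the span of $a,b,d,e$, which contains $a$ and $b$ but not $[a,b]=c$. The standard repair is Kirillov's lemma: the element defining the kernel must be taken in the \emph{second centre} --- a noncentral $y$ with $[\g,y]\subseteq \ZZ(\g)$ and $\theta$ nontrivial on $[\g,y]$ --- for then $\mathfrak h=\{a\in\g:\theta([a,y])=1\}$ contains $\g'$ and is automatically an ideal of index $p$. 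With that correction (and after fixing the bookkeeping in the same sentence: the orbit grows by a factor of $p^{2}$, not $p=[\g:\mathfrak h]$, which is precisely what makes $p\cdot|\Theta'|^{1/2}=|\Theta|^{1/2}$ come out right, as one checks already for the Heisenberg group), your induction can be pushed through; but as written the polarization step fails.
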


\begin{proof}
See~\cite[Theorem 2.6]{Boyarchenko}. 
\end{proof}
\begin{remark}
For an extension of Kirillov's orbit method to the case $p=2$, see~\cite[Theorem 2.6]{Stasinski-Voll}
\end{remark}

Let $\Theta\subset\widehat{\g}$ be the orbit of $\theta_0\in \widehat{\g}$. Then from  Kirillov's character formula we see that the central character of $\rho_\Theta$ is $\restr{{\theta_0}}{\ZZ(\g)}$ and 
\begin{equation}\label{dim-Kirillov}
\dim(\rho_\Theta)=|\Theta|^{1/2}=[\g: \mathrm{Stab}_\G(\theta_0)]^{1/2}.
\end{equation}

\begin{proposition}\label{stab} The stabilizer of $\theta_0$ is given by
\begin{equation}\label{size-stab}
\mathrm{Stab}_\G(\theta_0)=\{x\in \g: \theta_0([x,y])=1\quad \forall y\in \g\}.
\end{equation}
\end{proposition}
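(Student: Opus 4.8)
The plan is to unwind the definition of the coadjoint action and convert the fixed-point condition into the bracket condition on the right-hand side of \eqref{size-stab}. Recall that $x\in\mathrm{Stab}_\G(\theta_0)$ means precisely that $\theta_0^x=\theta_0$ as elements of $\widehat{\g}=\Hom_\Z(\g,\C^*)$, i.e.\ $\theta_0^x(y)=\theta_0(y)$ for every $y\in\g$. By the formula for the coadjoint action, $\theta_0^x(y)=\theta_0\!\left(\sum_{n=0}^{c}\frac{\ad_x^n(y)}{n!}\right)$, so the stabilizer condition becomes $\theta_0\!\left(\sum_{n=1}^{c}\frac{\ad_x^n(y)}{n!}\right)=1$ for all $y\in\g$, after cancelling the $n=0$ term $\theta_0(y)$ using that $\theta_0$ is a homomorphism into the abelian group $\C^*$.

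First I would prove the easy inclusion: if $\theta_0([x,y])=1$ for all $y\in\g$, then in particular $\theta_0(\ad_x^n(y))=\theta_0([x,\ad_x^{n-1}(y)])=1$ for every $n\geq 1$ and every $y$, so every term in the sum $\sum_{n=1}^{c}\frac{\ad_x^n(y)}{n!}$ lies in $\ker\theta_0$; hence $\theta_0^x(y)=\theta_0(y)$ and $x$ stabilizes $\theta_0$. (Here one must be slightly careful: the factors $\tfrac1{n!}$ make sense because $p>c$, so each $n!$ is invertible in $\g$, and $\theta_0$ of an element of $\ker\theta_0$ scaled by an integer or by such an inverse is still trivial — this uses that $\ker\theta_0$ is a subgroup of the additive group $\g$.)

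The reverse inclusion is the substantive step, and I expect it to be the main obstacle because the condition $\theta_0\!\left(\sum_{n=1}^{c}\frac{\ad_x^n(y)}{n!}\right)=1$ for all $y$ does not obviously force each individual term to be trivial. The plan is to induct on the nilpotency class, or equivalently to run a descending induction using the central series $\g^l$. Fix $x$ in the stabilizer. Apply the stabilizer identity with $y$ replaced by elements of $\g^{c}$ (the last nonzero term of the central series): for such $y$ one has $\ad_x^n(y)\in\g^{c+n}=0$ for $n\geq 1$, so the identity is vacuous there; then move up one step and apply it to $y\in\g^{c-1}$, where only the $n=1$ term $\tfrac1{1!}[x,y]=[x,y]$ survives, giving $\theta_0([x,y])=1$ for all $y\in\g^{c-1}$. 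Feeding this back, one shows inductively that for $y\in\g^{l}$ all higher terms $\ad_x^n(y)$ with $n\geq 2$ already lie in $\ker\theta_0$ by the inductive hypothesis applied at level $l+1$, so the surviving $n=1$ term forces $\theta_0([x,y])=1$; continuing down to $l=1$ yields $\theta_0([x,y])=1$ for all $y\in\g$. One should double-check the bookkeeping that at each stage the terms being discarded genuinely fall under a previously established instance of the identity, and that the invertibility of the $n!$ (again guaranteed by $c<p$) lets one ignore the scalar coefficients throughout. Combining the two inclusions gives \eqref{size-stab}.
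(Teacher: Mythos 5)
Your proof is correct and follows essentially the same route as the paper's: the forward inclusion is the easy one, and the reverse inclusion is obtained by the same descending induction through the lower central series, starting with $y\in\g^{c-1}$ (where only the $n=1$ term of the coadjoint sum survives) and feeding each established level back in to kill the higher-order terms $\ad_x^n(y)$ at the next level down. The remarks about invertibility of $n!$ for $p>c$ and $\ker\theta_0$ being a subgroup are the right bookkeeping and are implicit in the paper's argument as well.
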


\begin{proof}
One inclusion is obvious. For the inclusion $ \subseteq$ note that 
\begin{equation*}
\mathrm{Stab}_\G(\theta_0)=\left\{x\in \g: \theta_0^x(y)=\theta_0(y),\ \forall y\in \g\right\}=\left\{x\in \g: \theta_0\left(\sum_{n=0}^c \frac{\ad_x^n(y)}{n!}\right)=\theta_0(y),\ \forall y\in\g\right\}.
\end{equation*}
Fix $x\in \mathrm{Stab}_\G(\theta_0)$. 
Then
\begin{equation}\label{Lie-nil-Kirillov}
\theta_0\left(\sum_{n=1}^c \frac{\ad_x^n(y)}{n!}\right)=0\text{ for all $y\in \g$.}
\end{equation}
Choose an arbitrary element $y\in \g^{c-1}$. Since $\g^{c+1}=0$, it follows  from~\eqref{Lie-nil-Kirillov} that $\theta_0(\ad_x(y))=0$. Next choose an arbitrary $y\in\g^{c-2}$, and note that 
$$
\sum_{n=1}^c\frac{\ad_x^n(y)}{n!}=\ad_x(y)+\frac{\ad_x^2(y)}{2}.
$$
In light of the previous step,  $\theta_0(\ad_x(y))=0$. Continuing this process, the claim follows for all $y \in \g$. 
\end{proof}

Let us now illustrate the power of the orbit method by showing how it can be used to compute the faithful dimension of certain 2-step nilpotent groups. 

\begin{proposition}\label{prop3.8}
Let $\G$ be a 2-step nilpotent $p$-group, where $p\geq 3$. Assume that $\ZZ(G)$ is cyclic. 
Then
$\mf(\G)=\sqrt{[\G:\ZZ(\G)]}$. 
\end{proposition}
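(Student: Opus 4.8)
The plan is to bound $\mf(\G)$ from both sides using Kirillov theory and Lemma~\ref{Meyer}. Since $\ZZ(\G)$ is cyclic, Lemma~\ref{Tensor-pabelian-Lemma} gives $d(\ZZ(\G))=1$, so by Lemma~\ref{Meyer} a faithful representation of minimal dimension is a single irreducible representation $\rho_\Theta$ whose central character, restricted to $\Omega_1(\ZZ(\G))$, generates $\widehat{\Omega}_1(\ZZ(\G))$; in particular $\mf(\G)=\dim(\rho_\Theta)$ for such a $\Theta$. So the problem reduces to finding the smallest $|\Theta|^{1/2}$ among orbits $\Theta$ of $\theta_0\in\widehat{\g}$ for which $\restr{\theta_0}{\Omega_1(\ZZ(\g))}$ is nontrivial (equivalently, faithful on $\Omega_1(\ZZ(\g))$, since that group is one-dimensional over $\FZ$).

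Next I would compute $|\Theta|$ via \eqref{dim-Kirillov} and Proposition~\ref{stab}: $|\Theta|=[\g:\mathrm{Stab}_\G(\theta_0)]$, where $\mathrm{Stab}_\G(\theta_0)=\{x\in\g:\theta_0([x,y])=1\ \forall y\in\g\}$ is the radical of the alternating $\C^*$-valued form $B_{\theta_0}(x,y):=\theta_0([x,y])$ on $\g$. The key structural point, exploiting 2-step nilpotency, is that $\g'\subseteq\ZZ(\g)$, so $\theta_0$ factors through a character of $\g/\g'$ on one side; more useful is that $B_{\theta_0}$ descends to a well-defined alternating form on $\g/\mathrm{Stab}$. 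One always has $\ZZ(\g)\subseteq\mathrm{Stab}_\G(\theta_0)$, hence $|\Theta|\le[\g:\ZZ(\g)]$ for \emph{every} $\theta_0$, giving the upper bound $\mf(\G)\le\sqrt{[\g:\ZZ(\g)]}=\sqrt{[\G:\ZZ(\G)]}$ once we exhibit one faithful $\theta_0$ — and any $\theta_0$ nontrivial on $\Omega_1(\ZZ(\g))$ works, since such a $\theta_0$ exists and the resulting $\rho_\Theta$ is faithful by Lemma~\ref{central-span-faith}.

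For the matching lower bound I need that for \emph{some} — in fact for a suitable — faithful $\theta_0$ the stabilizer is exactly $\ZZ(\g)$, i.e. the induced form on $\g/\ZZ(\g)$ is nondegenerate. Here is where cyclicity of $\ZZ(\g)$ is essential: choosing $\theta_0$ to be injective on $\ZZ(\g)$ (possible since $\widehat{\ZZ(\g)}$ surjects onto characters generating the dual and $\ZZ(\g)$ is cyclic), I claim $\mathrm{Stab}_\G(\theta_0)=\ZZ(\g)$. Indeed if $x\in\mathrm{Stab}$ then $\theta_0([x,y])=1$ for all $y$; since $[x,y]\in\g'\subseteq\ZZ(\g)$ and $\theta_0$ is injective on $\ZZ(\g)$, this forces $[x,y]=1$ for all $y$, i.e. $x\in\ZZ(\g)$. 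Hence $|\Theta|=[\g:\ZZ(\g)]$ and $\dim(\rho_\Theta)=\sqrt{[\G:\ZZ(\G)]}$, and combined with Lemma~\ref{Meyer} (minimal faithful dimension is the dimension of a single irreducible) this gives $\mf(\G)\ge\sqrt{[\G:\ZZ(\G)]}$ after checking that \emph{no} faithful irreducible can be smaller — which follows since every faithful irreducible $\rho_\Theta$ has central character nontrivial on $\ZZ(\g)$, but that alone does not bound $|\Theta|$ from below, so the real content is: a minimal faithful representation is irreducible (Lemma~\ref{Meyer}, using cyclicity), and every irreducible $\rho_{\Theta'}$ that is faithful has $\mathrm{Stab}_\G(\theta_0')$ equal to $\ZZ(\g)$ as well, by the same injectivity argument applied to $\restr{\theta_0'}{\ZZ(\g)}$ — a faithful central character on a cyclic group is automatically injective.

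The main obstacle I anticipate is the last point: showing every \emph{faithful} irreducible has stabilizer exactly $\ZZ(\g)$, not just that the minimal one does. This hinges on the observation that a character of the cyclic group $\ZZ(\g)$ is faithful (nontrivial on $\Omega_1$) if and only if it is injective, so any faithful $\rho_{\Theta}$ forces, via $\g'\subseteq\ZZ(\g)\subseteq\mathrm{Stab}$ and injectivity of $\restr{\theta_0}{\ZZ(\g)}$, the equality $\mathrm{Stab}_\G(\theta_0)=\ZZ(\g)$, hence $\dim(\rho_\Theta)=\sqrt{[\G:\ZZ(\G)]}$ \emph{exactly} for all faithful irreducibles; combined with Lemma~\ref{Meyer} this nails $\mf(\G)$ on the nose.
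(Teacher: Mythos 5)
Your argument is correct and follows essentially the same route as the paper's: Lemma~\ref{Meyer} reduces the problem to a single irreducible, and the key observation that a faithful character of the cyclic centre is automatically injective forces $\mathrm{Stab}_\G(\theta_0)=\ZZ(\g)$ via $\g'\subseteq\ZZ(\g)$, so the minimal faithful irreducible has dimension exactly $\sqrt{[\G:\ZZ(\G)]}$. The separate upper-bound/lower-bound framing is more elaborate than needed (the paper just computes the dimension of the minimal faithful representation directly), but the substance is identical.
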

\begin{proof}
 Let $\rho$ be a faithful representation of $\G$ of minimal dimension. Since the centre of $\G$ is cyclic, it follows from Lemma~\ref{Meyer} that $\rho$ is irreducible. By the Lazard correspondence $\G=\exp(\g)$, where $\g$ is a finite Lie algebra of class two. It suffices to show that $\dim\rho=\sqrt{[\g:\ZZ(\g)]}$. Thanks to the orbit method, there exists $\theta_0\in \widehat{\g}$ such that 
$$
\chi_\rho(x)=\frac{1}{\sqrt{|\Theta|}}\sum_{\theta\in\Theta}\theta(x),
$$
where $\Theta$ is the $\G$-orbit of $\theta_0$. Therefore 
$$
\dim\rho=\sqrt{|\Theta|}=\sqrt{[\g:\mathrm{Stab}_\G(\theta_0)]}. 
$$
Recall that we identify $\ZZ(\G)$ with $\ZZ(\g)$. The restriction to $\ZZ(\g)$ of the central character of $\rho$ is $\theta_0$, and so $\theta_0: \ZZ(\g)\to \C^*$ is faithful. Now let $x\in \mathrm{Stab}_\G(\theta_0)$. Then 
$\theta_0([x,y])=1$ 
for all $y\in \g$,
so that  $[x,y]=0$.  Consequently,  $\mathrm{Stab}_\G(\theta_0)=\ZZ(\g)$. This completes the proof. 
\end{proof}

The class of $p$-groups covered by Proposition~\ref{prop3.8} includes all the extra special $p$-groups for $p\geq 3$; a $p$-group $\G$ is called extra special when its centre $\ZZ(\G)$ has $p$ elements and $\G/\ZZ(\G)$ is an elementary abelian $p$-group.  It is well-known that an extra special $p$-group has order $p^{2n+1}$ for some positive integer $n$.  Thus the faithful dimension of $\G$ is $p^n$.

\section{Faithful dimension of pattern groups}\label{Poset-Section} 
This section is devoted to the proof of Theorem~\ref{partial-order-Lie}. We start by recalling some notation that was defined in Section~\ref{Root-Groups-Sec}. Let $\prec$ be a partial order on the set $[n]:=\{1,\dots,n\}$. We can associate to $\prec$ the Lie algebra defined by 
$$\g:=\g_\prec=\Span_{\Z}\{\e_{ij}: i\prec j\}\subseteq \mathfrak{gl}(n,\Z).$$ 
Recall that the length of $([n],\prec)$, denoted by $\lambda_\prec$, is defined to be the maximum value of 
$r$ such that there exists a chain $i_0 \prec \cdots \prec i_r$ in $([n], \prec)$.
\begin{lemma}\label{generator-comm}
The nilpotency class of $\g$ is equal to the length of $([n],\prec)$.
\end{lemma}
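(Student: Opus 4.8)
The plan is to show that the nilpotency class of $\g = \g_\prec$ equals $\lambda_\prec$ by proving two inequalities: that the descending central series reaches zero after $\lambda_\prec$ steps, and that it is genuinely nonzero at step $\lambda_\prec$. The key tool throughout is the commutator relation $[\e_{ij},\e_{kl}]=\delta_{jk}\e_{il}-\delta_{li}\e_{kj}$, together with the normalization that $i\prec j$ implies $i<j$, so $\e_{ij}$ is genuinely strictly upper triangular.

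First I would identify $\g^l$ explicitly. The claim is that $\g^l = \Span_\Z\{\e_{ij} : \text{there is a chain } i = i_0 \prec i_1 \prec \cdots \prec i_l = j\}$, i.e. $\g^l$ is spanned by those basis elements $\e_{ij}$ for which there is a $\prec$-chain of length at least $l$ from $i$ to $j$. This is proved by induction on $l$. The base case $l=1$ is the definition, since every $i\prec j$ gives a chain of length $1$. For the inductive step, $\g^{l+1}$ is generated by brackets $[\e_{ab},\e_{ij}]$ with $a\prec b$ and $(i,j)$ admitting a chain of length $l$; by the commutator relation this bracket is $\delta_{bi}\e_{aj} - \delta_{ja}\e_{ib}$. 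The first term is nonzero only when $b=i$, in which case concatenating the edge $a\prec b=i$ with the length-$l$ chain from $i$ to $j$ gives a chain of length $l+1$ from $a$ to $j$; the second term is handled symmetrically (here one must check that $j\prec a$ cannot happen simultaneously with the chain configuration forcing the relevant lengths — in fact $j\prec a$ together with $i \prec \cdots \prec j$ and $a \prec b$ would not immediately contradict anything, but the term $-\delta_{ja}\e_{ib}$ contributes $\e_{ib}$ with a chain $i \prec \cdots \prec j = a \prec b$ of length $l+1$, so it too lies in the claimed span). Conversely, any $\e_{ij}$ with a chain of length $l+1$ arises as such a bracket by splitting off the first edge. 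This gives the explicit description.

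Granting the description, the conclusion is immediate: $\g^{l}\neq 0$ precisely when there exists a chain of length $l$, and the longest chain has length $\lambda_\prec$, so $\g^{\lambda_\prec}\neq 0$ while $\g^{\lambda_\prec+1}=0$. Hence the nilpotency class is exactly $\lambda_\prec$.

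The main obstacle I anticipate is the bookkeeping in the inductive step, specifically making sure that the two terms $\delta_{bi}\e_{aj}$ and $-\delta_{ja}\e_{ib}$ of the commutator are each accounted for without double counting or sign confusion, and that when a Kronecker delta fires the resulting index pair really does satisfy the required chain condition (in particular that concatenated chains remain strictly increasing, which follows from $i\prec j \Rightarrow i<j$, so no collapse occurs). A secondary subtlety is that one should phrase everything over $\Z$ and note that the argument is insensitive to the coefficient ring, so the same computation describes $\g_\prec \otimes_\Z \F_q$; but for this lemma only the statement over $\Z$ is needed.
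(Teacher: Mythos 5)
Your proposal is correct and follows essentially the same route as the paper: both arguments rest on the relation $[\e_{ij},\e_{kl}]=\delta_{jk}\e_{il}-\delta_{li}\e_{kj}$, obtaining the lower bound from the iterated bracket along a chain and the upper bound from the fact that a nonzero iterated commutator forces a chain of the same length. Your version merely packages this as an explicit inductive identification of $\g^l$ with the span of the $\e_{ij}$ admitting a chain of length at least $l$ (note only that the delta $\delta_{ja}$ fires when $j=a$, not $j\prec a$ — your final handling of that term is nonetheless correct), which is a slightly more detailed write-up of the same idea.
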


\begin{proof}[Proof of Lemma~\ref{generator-comm}] 
First note that  
\begin{equation}\label{comm-rel}
[\e_{ij},\e_{kl}]=\delta_{jk}\e_{il}-\delta_{li}\e_{kj}
\text{ for $i\prec j$ and $k\prec l$.}
\end{equation}

From~\eqref{comm-rel} it follows that $[\e_{ij},\e_{kl}]= \e_{il}$ when $i\prec j=k\prec l$ and  $[\e_{ij},\e_{kl}]=-\e_{kj}$ when $k\prec l=i\prec j$. In other cases $[\e_{ij},\e_{kl}]=0$.
Given a chain $i_0 \prec \cdots \prec i_r$ in $([n], \prec)$, one can see that
\[ \e_{i_0, i_r}=[\e_{i_0,i_1}, [ \e_{i_1, i_2},  [\dots  ,  \e_{i_{r-1}, i_r}] ] \dots ]  \neq 0,\]
and hence the nilpotency class of $\g$ is at least $r$. Similarly, one can see that a non-zero commutator of length $r+1$ leads to 
a chain of length $r+1$, proving the claim. 
\end{proof}

Recall that $\Iex$ is the set of extreme pairs
(see Definition \ref{dfn:exxtr}).
\begin{lemma} The centre $\mathrm{Z}(\g)$ of $\g$ is spanned by 
$\{\e_{ij}\,:\,(i,j)\in \Iex\}$. 
\end{lemma}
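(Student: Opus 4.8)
The plan is to compute the centre of $\g=\g_\prec$ directly from the commutator relation~\eqref{comm-rel}, or rather its clean consequence recorded in the proof of Lemma~\ref{generator-comm}: for basis elements, $[\e_{ij},\e_{kl}]$ equals $\e_{il}$ when $i\prec j=k\prec l$, equals $-\e_{kj}$ when $k\prec l=i\prec j$, and vanishes otherwise. Since $\{\e_{ij}:i\prec j\}$ is a $\Z$-basis of $\g$, an element $z=\sum_{i\prec j}c_{ij}\e_{ij}$ is central if and only if $[z,\e_{kl}]=0$ for every generator $\e_{kl}$, and I would extract from this a pointwise condition on the coefficients $c_{ij}$.

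First I would show the inclusion $\Span_\Z\{\e_{ij}:(i,j)\in\Iex\}\subseteq\ZZ(\g)$. If $(i,j)\in\Iex$, then $i$ is minimal and $j$ is maximal, so there is no $k$ with $k\prec l=i$ (as $i$ is minimal) and no $l$ with $j=k\prec l$ (as $j$ is maximal); by the case analysis above, $[\e_{ij},\e_{kl}]=0$ for all generators $\e_{kl}$, hence $\e_{ij}\in\ZZ(\g)$. For the reverse inclusion, take $z=\sum_{i\prec j}c_{ij}\e_{ij}\in\ZZ(\g)$ and fix a pair $(a,b)$ with $a\prec b$ and $c_{ab}\neq 0$; I want to force $(a,b)\in\Iex$, i.e. $a$ minimal and $b$ maximal. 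Suppose $a$ is not minimal, so there is $k$ with $k\prec a$. Pair $z$ against $\e_{ka}$: the only basis terms $\e_{ij}$ that produce a nonzero bracket with $\e_{ka}$ are those with $i=a$ (giving $[\e_{aj},\e_{ka}]=-\e_{kj}$, here using $k\prec a=i\prec j$) and those with $j=k$ (giving $[\e_{ik},\e_{ka}]=\e_{ia}$, using $i\prec k=k'\prec l=a$). I would observe that the resulting basis vectors $\e_{kj}$ (for $j$ with $a\prec j$) and $\e_{ia}$ (for $i$ with $i\prec k$) are pairwise distinct basis elements of $\g$ — crucially $\e_{ka}$ itself is not among them since $k\prec a$ prevents $k\prec a\prec j$ collapsing and $i=a$ is impossible as $i\prec k\prec a$ — so the coefficient of $\e_{ka}$... more carefully, the coefficient of $\e_{kb}$ in $[z,\e_{ka}]$ is exactly $-c_{ab}\neq 0$ (the term $\e_{ia}$-type contributions have second index $a\neq b$ since $b\ne a$, and no other $\e_{aj}$ with $j\neq b$ contributes to the $\e_{kb}$ coefficient), contradicting centrality. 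Symmetrically, if $b$ is not maximal, pairing $z$ against $\e_{bk}$ for some $k$ with $b\prec k$ yields a nonzero coefficient of $\e_{ak}$, again a contradiction. Hence every pair $(a,b)$ with $c_{ab}\neq 0$ lies in $\Iex$, so $z\in\Span_\Z\{\e_{ij}:(i,j)\in\Iex\}$.

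The main obstacle is purely bookkeeping: making sure that when I expand $[z,\e_{ka}]$ into the basis, the various contributions $-c_{aj}\e_{kj}$ and $c_{ik}\e_{ia}$ do not accidentally cancel the term I am tracking. The clean way around this is to single out one specific target basis vector — namely $\e_{kb}$ when testing minimality of $a$, and $\e_{ak}$ when testing maximality of $b$ — and check that \emph{only} the $\e_{ab}$-term of $z$ contributes to its coefficient; this reduces the whole argument to verifying a handful of index (in)equalities ($k\neq a$, $k\prec a$ forbids $a\prec k$, $b\neq a$, etc.) rather than controlling the full expansion. I would also remark that the same computation shows $\ZZ(\g)$ is a free $\Z$-module with the displayed basis, and that the statement is insensitive to the ground ring, so it descends verbatim to $\g_\prec\otimes_\Z\F_q$, which is how it will be used in the proof of Theorem~\ref{partial-order-Lie}.
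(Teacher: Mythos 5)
Your proof is correct and follows essentially the same route as the paper: both directions rest on the same case analysis of $[\e_{ij},\e_{kl}]$, and the reverse inclusion is obtained by bracketing $z$ against $\e_{ka}$ for some $k\prec a$ (respectively against $\e_{bk}$ for some $b\prec k$). The only difference is cosmetic bookkeeping: the paper chooses $k$ to be a \emph{minimal} element with $k\prec i_1$, so that the entire bracket collapses to $-\sum_{i_1\prec j}x_{i_1j}\e_{kj}$ with no cross terms, whereas you take an arbitrary $k$ and instead isolate the coefficient of the single basis vector $\e_{kb}$ --- both devices eliminate the same potential interference.
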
 
\begin{proof}
We first show that $\e_{ij}$ with $(i,j) \in \Iex$ is in the centre of $\g$.
 From~\eqref{comm-rel} it follows that  
$$
[\e_{ij},\e_{kl}]=0\text{ for all $k\prec l$,}
$$
since $i$ is minimal and $j$ is maximal.
Conversely, suppose $z=\sum_{i\prec j}x_{ij}\e_{ij}\in\ZZ(\g)$. 
We show that for each $i_1\prec j_1$, if  $x_{i_1j_1}\neq 0$ then $(i_1,j_1)\in\Iex$. Assume $i_1$ is not minimal, and pick  a minimal element $k\prec i_1$. 
Then~\eqref{comm-rel} implies that
$$
0=\left[z ,\e_{ki_1}\right]=-\sum_{i_1\prec j}x_{i_1j}\e_{kj},
$$
and thus $x_{i_1j_1}=0$, which is a contradiction. A similar argument shows that $j_1$ is maximal. 
\end{proof}

An additive character $\psi:\F_q\to \C^*$ is called {\it primitive} if the pairing 
\[
\F_q\times \F_q\to \C^*\ ,\ (x,y)\mapsto \psi(xy)
\] is non-degenerate.
We fix a primitive character $\psi$
by choosing $\iota: \F_p\to \C^*$ to be a faithful character and defining $\psi(x):=\iota(\Tr(x))$, where  
$\Tr:=\mathrm{Tr}_{\F_q/\F_p}: \F_q\to \F_p$ is the trace map. Using $\psi$ we can identify the Pontryagin dual of the additive group of $\F_q$ with $\F_q$.
It follows that all characters  of $\g_q$ are obtained by vectors $\bb=(b_{ij})\in\bigoplus_{i\prec j}\F_q$, via 
\begin{equation*}
\psi_{\bb}\left(\sum_{i\prec j} x_{ij}\e_{ij}\right):=\psi\left(\sum_{i\prec j}b_{ij}x_{ij}\right).
\end{equation*}
By Lemma \ref{generator-comm}, the Lie algebra
 $\g_q$  has nilpotency class 
$\lambda_{\prec}$. 
In the rest of this section we assume that $p>\lambda_\prec$. By the orbit method every irreducible representation of $\GG_q$ is constructed from the orbit of a character $\psi_\bb \in \widehat{\g}_q$ in the coadjoint action. We denote the irreducible representation obtained from $\psi_\bb$ by $\rho_\bb$.

\begin{proposition}\label{level-prop-Min} Let $\bb=(b_{ij})$ be an element of $\bigoplus_{i\prec j}\F_q$ and let $\rho_{\bb}$ be the irreducible representation of $\GG_q$ associated to the orbit of $\psi_{\bb}$.   
For all $(i_1,j_1)\in \Iex$, if $b_{i_1j_1}\neq 0$ then 
\begin{equation*}
 \dim\rho_{\bf b}\geq q^{\alpha(i_1,j_1)}.
\end{equation*}
\end{proposition}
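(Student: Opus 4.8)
The plan is to combine the dimension formula \eqref{dim-Kirillov} with the explicit description of the stabilizer from Proposition~\ref{stab}, and then reduce the statement to a rank estimate for an auxiliary bilinear form. Since $p>\lambda_\prec$, both ingredients apply. Introduce the $\F_q$-bilinear alternating form $B_\bb$ on $\g_q$ by $B_\bb(x,y):=\sum_{i\prec j}b_{ij}\,[x,y]_{ij}$, where $[x,y]_{ij}\in\F_q$ denotes the coefficient of $\e_{ij}$ in $[x,y]$; by construction $\psi_\bb([x,y])=\psi\bigl(B_\bb(x,y)\bigr)$. Since $\psi$ is a primitive, hence nontrivial, additive character and $B_\bb(x,\lambda y)=\lambda B_\bb(x,y)$ for $\lambda\in\F_q$, I would first check that $\mathrm{Stab}_{\GG_q}(\psi_\bb)=\mathrm{rad}(B_\bb)$: if $B_\bb(x,y_0)\ne 0$ then $\lambda\mapsto B_\bb(x,\lambda y_0)$ ranges over all of $\F_q$, so some value lies outside $\ker\psi$ and $\psi_\bb([x,\lambda y_0])\ne 1$; the reverse inclusion is immediate. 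By \eqref{dim-Kirillov} this gives
\[
\dim\rho_\bb=[\g_q:\mathrm{rad}(B_\bb)]^{1/2}=q^{\rank(B_\bb)/2},
\]
so it suffices to prove $\rank(B_\bb)\ge 2\alpha(i_1,j_1)$.

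To bound the rank from below, the idea is to exhibit a subspace of dimension $2\alpha(i_1,j_1)$ on which $B_\bb$ is nondegenerate. Set $S:=\{k\in[n]:i_1\prec k\prec j_1\}$, so $\#S=\alpha(i_1,j_1)$, and let $W:=\Span_{\F_q}\bigl(\{\e_{i_1k}:k\in S\}\cup\{\e_{kj_1}:k\in S\}\bigr)$; these $2\#S$ vectors are pairwise distinct basis vectors of $\g_q$ because $i_1\prec k\prec j_1$ forces $i_1,j_1,k$ to be pairwise distinct. Using the commutator relation \eqref{comm-rel}, together with $i_1\ne j_1$ and $k,k'\notin\{i_1,j_1\}$ for $k,k'\in S$, one computes $[\e_{i_1k},\e_{i_1k'}]=[\e_{kj_1},\e_{k'j_1}]=0$ and $[\e_{i_1k},\e_{k'j_1}]=\delta_{kk'}\,\e_{i_1j_1}$. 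Since $(i_1,j_1)\in\Iex$ we have $\psi_\bb(\e_{i_1j_1})=\psi(b_{i_1j_1})$, so in the ordered basis $(\e_{i_1k})_{k\in S},(\e_{kj_1})_{k\in S}$ the Gram matrix of $B_\bb|_W$ is block-anti-diagonal with off-diagonal blocks $b_{i_1j_1}I_{\alpha(i_1,j_1)}$ and $-b_{i_1j_1}I_{\alpha(i_1,j_1)}$; its determinant is $b_{i_1j_1}^{2\alpha(i_1,j_1)}\ne 0$ precisely because $b_{i_1j_1}\ne 0$, so $B_\bb|_W$ is nondegenerate.

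Finally, nondegeneracy of $B_\bb|_W$ forces $\rank(B_\bb)\ge\dim W=2\alpha(i_1,j_1)$: the map $\g_q\to W^\ast$, $x\mapsto B_\bb(x,\cdot)|_W$, is already surjective on $W$, hence surjective, and its kernel contains $\mathrm{rad}(B_\bb)$, whence $\dim\mathrm{rad}(B_\bb)\le\dim_{\F_q}\g_q-2\alpha(i_1,j_1)$. Substituting into the displayed formula yields $\dim\rho_\bb\ge q^{\alpha(i_1,j_1)}$. I do not expect a genuine obstacle here: the one point deserving care is the reduction from the $\C^\ast$-valued stabilizer condition of Proposition~\ref{stab} to the $\F_q$-valued radical of $B_\bb$, which hinges only on nontriviality of $\psi$ and $\F_q$-bilinearity of the bracket; everything else is direct bookkeeping with the structure constants \eqref{comm-rel}.
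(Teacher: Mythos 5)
Your proof is correct and is essentially the paper's argument in dual form: the paper writes out the linear system \eqref{Linear-eq1} cutting out $\mathrm{Stab}_{\GG_q}(\psi_\bb)$ and isolates the $2\alpha(i_1,j_1)$ equations in which $b_{i_1j_1}$ multiplies the variables $x_{ij_1}$ and $x_{i_1j}$, while you package the same structure constants as the Gram matrix of the alternating form $B_\bb$ restricted to $W=\Span\{\e_{i_1k},\e_{kj_1}:i_1\prec k\prec j_1\}$ and conclude via nondegeneracy there. The coefficient matrix of the paper's selected equations is exactly your block-anti-diagonal Gram matrix, so the two proofs coincide up to this reformulation.
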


\begin{proof} Set $I:=\{(i,j): i\prec j\}$. For $x=\sum_{i\prec j}x_{ij}\e_{ij}\in \mathrm{Stab}_{\GG_q}(\psi_{\bb})$ and $y=\sum_{i\prec j}y_{ij}\e_{ij}\in \g_q$, it follows from ~\eqref{comm-rel} that 
\begin{equation*}
\begin{split}
1=\psi_{\bb}([x,y])&=\psi_{\bb}\left(\sum_{i\prec j,\, k\prec l} x_{ij}y_{kl}[\e_{ij},\e_{kl}]\right)=\psi\left(\sum_{i\prec j}\sum_{i\prec k\prec j}b_{ij}(x_{ik}y_{kj}-x_{kj}y_{ik})\right)\\
&=\psi\left(\sum_{i\prec j}\left(\sum_{k\prec i}b_{kj}x_{ki}-\sum_{j\prec l}b_{il}x_{jl}\right)y_{ij}\right).
\end{split}
\end{equation*}
Since $y_{ij}\in \F_q$ is arbitrary and $\psi$ is a primitive character,  we obtain 
a system of linear equations  
\begin{equation}\label{Linear-eq1}
L_{ij}(x_{st}):=\sum_{k\prec i}b_{kj}x_{ki}-\sum_{j\prec l}b_{il}x_{jl}=0,
\end{equation}
which describes the stabilizer of $\psi_{\bb}$.
The equations in~\eqref{Linear-eq1} have coefficients in $\F_q$ and are indexed by pairs $i,j$ such that $i\prec j$.
We now consider only the linear forms $L_{i_1i}(x_{st})$ and $L_{jj_1}(x_{st})$, for $i_1\prec i\prec j_1$ and $i_1\prec j\prec j_1$. From these $i$ and $j$, we obtain $2\alpha(i_1,j_1)$ linear equations with coefficients in $\F_q$, as follows:
\begin{equation}\label{Linear-eq2}
\begin{split}
b_{i_1j_1}x_{ij_1}&=-\sum_{i\prec k\neq  j_1}b_{i_1k}x_{ik}, \qquad i_1\prec i\prec j_1;\\
b_{i_1j_1}x_{i_1j}&=-\sum_{i_1\neq l\prec j}b_{lj_1}x_{lj}, \qquad i_1\prec j\prec j_1.
\end{split}
\end{equation}
From $b_{i_1j_1}\neq 0$ it follows that $x_{i j_1}$ ($i_1\prec i\prec j_1$) and $x_{i_1j}$ ($i_1\prec j\prec j_1$) are dependent variables and thus, by noticing that each linear form has $\#I$ variables, the number of solutions of the equations~\eqref{Linear-eq2} is at most 
\begin{equation}\label{Stab}
q^{\# I-2\alpha(i_1,j_1)}.
\end{equation}
Thus the size of the stabilizer~\eqref{Linear-eq1} is at most~\eqref{Stab} and this gives the lower bound by~\eqref{dim-Kirillov}.
\end{proof}

\begin{lemma}\label{level-lemma} Let $b$ be a non-zero element of $\F_q$. Fix $(i,j)\in \Iex$ and define ${\bf b}=(b_{kl})_{k\prec l}$, where $b_{ij}=b$ and the other components are zero. Then the dimension of the irreducible representation $\rho_{\bf b}$ is $q^{\alpha(i,j)}$.
\end{lemma}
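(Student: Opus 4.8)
The plan is to compute the dimension of $\rho_{\mathbf{b}}$ exactly via \eqref{dim-Kirillov}, which reduces to determining the exact size of the stabilizer $\mathrm{Stab}_{\GG_q}(\psi_{\mathbf{b}})$ defined by the linear system \eqref{Linear-eq1}. When $\mathbf{b}$ has a single nonzero component $b_{ij}=b$ with $(i,j)\in\Iex$, the equations \eqref{Linear-eq1} simplify dramatically: since $b_{kl}=0$ unless $(k,l)=(i,j)$, the form $L_{st}(x_{uv})$ is nonzero only when it can involve $b_{ij}$, i.e.\ only for the indices $(s,t)=(i,t)$ with $i\prec t\prec j$ (coming from the first sum in \eqref{Linear-eq1} with $k=i$, $j$ replaced by $j$) and $(s,t)=(s,j)$ with $i\prec s\prec j$ (from the second sum with $l=j$). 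Concretely, the surviving equations are exactly
\begin{equation*}
b\,x_{ij}=0\ \text{(from $L_{ij}$)},\qquad b\,x_{s j}=0\ (i\prec s\prec j),\qquad b\,x_{i t}=0\ (i\prec t\prec j),
\end{equation*}
wait — one must be careful: $x_{ij}$ with $(i,j)\in\Iex$ is a central variable and may not appear, so I would track precisely which of the two terms in \eqref{Linear-eq1} can be nonzero. The upshot is that the stabilizer is cut out by exactly $\alpha(i,j)$ independent equations $x_{it}=0$ ($i\prec t\prec j$) together with $\alpha(i,j)$ equations $x_{sj}=0$ ($i\prec s\prec j$), and no others, so $|\mathrm{Stab}_{\GG_q}(\psi_{\mathbf{b}})|=q^{\#I-2\alpha(i,j)}$ \emph{exactly}.

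First I would write out \eqref{Linear-eq1} for this special $\mathbf{b}$ and verify that, for each pair $(s,t)$ with $s\prec t$, the linear form $L_{st}$ is identically zero unless $s=i$ and $t$ is comparable strictly between $i$ and $j$ in one sum, or $t=j$ and $s$ strictly between in the other; this is a direct bookkeeping check using that the only nonzero $b_{kl}$ is $b_{ij}$ and that $(i,j)$ is extreme (so $i$ is minimal and $j$ maximal, which matters for seeing that certain index ranges are empty). Second, I would observe that the nonzero forms are the single monomials $b\,x_{it}$ and $b\,x_{sj}$, and that these $2\alpha(i,j)$ variables are pairwise distinct — here again extremality is used: the sets $\{x_{it}:i\prec t\prec j\}$ and $\{x_{sj}:i\prec s\prec j\}$ are disjoint because a variable $x_{it}=x_{sj}$ would force $i=s$ and $t=j$, i.e.\ the pair would be $(i,j)$ itself, excluded since $\alpha$ ranges over strict intermediates. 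Third, since $b\neq 0$, these equations say precisely that $2\alpha(i,j)$ of the coordinates of $x$ vanish and the remaining $\#I-2\alpha(i,j)$ are free, giving the exact count $|\mathrm{Stab}_{\GG_q}(\psi_{\mathbf{b}})|=q^{\#I-2\alpha(i,j)}$.

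Finally, plugging into \eqref{dim-Kirillov} gives
\begin{equation*}
\dim\rho_{\mathbf{b}}=\big[\g_q:\mathrm{Stab}_{\GG_q}(\psi_{\mathbf{b}})\big]^{1/2}=\big(q^{\#I}/q^{\#I-2\alpha(i,j)}\big)^{1/2}=q^{\alpha(i,j)},
\end{equation*}
which is the desired equality. Note that the lower bound $\dim\rho_{\mathbf{b}}\geq q^{\alpha(i,j)}$ is already furnished by Proposition~\ref{level-prop-Min} applied with $(i_1,j_1)=(i,j)$, so strictly speaking I only need the matching \emph{upper} bound $|\mathrm{Stab}_{\GG_q}(\psi_{\mathbf{b}})|\geq q^{\#I-2\alpha(i,j)}$, i.e.\ the fact that the system \eqref{Linear-eq1} has \emph{no} equations beyond the $2\alpha(i,j)$ exhibited above; I would phrase the argument to emphasize this, citing Proposition~\ref{level-prop-Min} for one inequality and the vanishing-of-all-other-forms computation for the other.

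The main obstacle is the careful index bookkeeping in the first step: one must be scrupulous about the two sums in \eqref{Linear-eq1}, about whether the "diagonal" variable $x_{ij}$ (with $(i,j)\in\Iex$) even occurs as a coordinate in $\g_q$, and about using minimality of $i$ and maximality of $j$ to rule out spurious contributions — for instance to see that in $L_{it}$ the sum $\sum_{k\prec i}b_{kt}x_{ki}$ is empty because $i$ is minimal, and similarly the problematic sum in $L_{sj}$ is empty because $j$ is maximal. Everything else is a one-line linear-algebra count.
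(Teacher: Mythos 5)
Your proposal is correct and follows essentially the same route as the paper: specialize the linear system \eqref{Linear-eq1} to the vector $\mathbf b$ with a single nonzero entry, observe that the stabilizer is cut out by exactly the $2\alpha(i,j)$ independent equations $bx_{ik}=0$ and $bx_{kj}=0$ for $i\prec k\prec j$, and conclude via \eqref{dim-Kirillov}. The only differences are cosmetic (your extra remark that Proposition~\ref{level-prop-Min} already supplies one of the two inequalities); the paper's own proof is the same stabilizer count.
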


\begin{proof} Set $I:=\{(i,j): i\prec j\}$.
The proof of Proposition~\ref{level-prop-Min}, namely equation~\eqref{Linear-eq1}, shows that the stabilizer of  $\rho_{\bf b}$ is defined by the equations  
$bx_{ik}=0$ and $bx_{kj}=0$, 
where $i\prec k\prec j$.  These show that the stabilizer has cardinality $q^{\# I-2\alpha(i,j)}$, and therefore the dimension of $\rho_{\bb}$ is $q^{\alpha(i,j)}$ by~\eqref{dim-Kirillov}. 
\end{proof}

Using this we now construct a faithful representation of  $\GG_q$.

\begin{lemma}\label{cons-faith} The group $\GG_q$ has a faithful representation of dimension 
$$
\sum_{(i,j)\in \Iex} fq^{\alpha(i,j)}.
$$
\end{lemma}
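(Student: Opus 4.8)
The goal is to exhibit a faithful representation of $\GG_q$ of dimension $\sum_{(i,j)\in\Iex}fq^{\alpha(i,j)}$. The strategy is to combine Lemma~\ref{central-span-faith} with Lemma~\ref{level-lemma}: I need a finite family of irreducible representations whose central characters, restricted to $\Omega_1(\ZZ(\GG_q))$, span $\widehat{\Omega}_1(\ZZ(\GG_q))$, and whose total dimension is the claimed quantity. Recall that $\ZZ(\g_q)$ is the $\F_q$-span of $\{\e_{ij}:(i,j)\in\Iex\}$, so as an abelian group $\ZZ(\GG_q)\cong\F_q^{\#\Iex}$, an elementary abelian $p$-group of rank $f\cdot\#\Iex$; hence $\widehat{\Omega}_1(\ZZ(\GG_q))=\widehat{\ZZ(\GG_q)}$ has $\F_p$-dimension $f\cdot\#\Iex$.

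First I would fix, for each extreme pair $(i,j)\in\Iex$, a basis $\beta_1,\dots,\beta_f$ of $\F_q$ over $\F_p$; these will index $f$ characters per extreme pair, giving $f\cdot\#\Iex$ characters in total — exactly the right number to span the $f\cdot\#\Iex$-dimensional space $\widehat{\ZZ(\GG_q)}$. Concretely, for $(i,j)\in\Iex$ and $1\le k\le f$, let $\bb^{(i,j,k)}=(b_{st})_{s\prec t}$ be the vector with $b_{ij}=\beta_k$ and all other entries zero, and let $\rho_{\bb^{(i,j,k)}}$ be the associated irreducible representation. By Lemma~\ref{level-lemma}, $\dim\rho_{\bb^{(i,j,k)}}=q^{\alpha(i,j)}$, so summing over $k$ and over $(i,j)$ gives total dimension $\sum_{(i,j)\in\Iex}fq^{\alpha(i,j)}$, as required.

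Next I must check the spanning condition. By Theorem~\ref{Kirillov} (and the remark following~\eqref{dim-Kirillov}), the central character of $\rho_{\bb}$ is $\restr{\psi_{\bb}}{\ZZ(\g_q)}$. For $\bb=\bb^{(i,j,k)}$ this is the character of $\ZZ(\g_q)=\bigoplus_{(s,t)\in\Iex}\F_q\e_{st}$ sending $\sum x_{st}\e_{st}\mapsto\psi(\beta_k x_{ij})$. Under the identification~\eqref{iso-Ome} of $\widehat{\Omega}_1(\ZZ(\GG_q))$ with $\Hom(\ZZ(\GG_q),\FZ)$, the character $\psi=\iota\circ\Tr$ corresponds to the trace form, so the character $\restr{\psi_{\bb^{(i,j,k)}}}{\ZZ(\g_q)}$ corresponds to the $\F_p$-linear functional $\sum x_{st}\e_{st}\mapsto\Tr_{\F_q/\F_p}(\beta_k x_{ij})$. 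These functionals, for fixed $(i,j)$ and $k=1,\dots,f$, are linearly independent over $\F_p$ because the bilinear trace form on $\F_q$ is non-degenerate (primitivity of $\psi$), hence $\{\beta_k\}_k$ being an $\F_p$-basis forces $\{x\mapsto\Tr(\beta_k x)\}_k$ to be a basis of $\Hom_{\F_p}(\F_q,\F_p)$; and functionals attached to distinct extreme pairs involve disjoint sets of coordinates, so the whole family of $f\cdot\#\Iex$ functionals is a basis of $\Hom_{\F_p}(\ZZ(\GG_q),\F_p)\cong\widehat{\Omega}_1(\ZZ(\GG_q))$. In particular it spans, so Lemma~\ref{central-span-faith} applies and $\bigoplus_{(i,j)\in\Iex}\bigoplus_{k=1}^f\rho_{\bb^{(i,j,k)}}$ is a faithful representation of $\GG_q$ of the stated dimension.

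The only genuinely delicate point is the bookkeeping in the last paragraph: one must be careful that the identification $\widehat{\Omega}_1(\ZZ(\GG_q))\cong\Hom_{\F_p}(\ZZ(\GG_q),\F_p)$ really does send $\restr{\psi_{\bb}}{\ZZ(\g_q)}$ to the trace functional $x\mapsto\Tr(\langle\bb,x\rangle)$, which is precisely where primitivity of $\psi$ (the definition $\psi=\iota\circ\Tr$) enters. Everything else — the dimension count and the direct-sum decomposition of $\ZZ(\g_q)$ over extreme pairs — is routine. I do not expect any real obstacle; the lemma is essentially an assembly of Lemma~\ref{level-lemma} and Lemma~\ref{central-span-faith}.
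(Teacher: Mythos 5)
Your proposal is correct and follows essentially the same route as the paper: the same vectors $\bb^{(i,j,k)}$ (one $\F_p$-basis of $\F_q$ per extreme pair), the same appeal to Lemma~\ref{level-lemma} for the dimension count, and the same application of Lemma~\ref{central-span-faith} via the spanning of $\widehat{\Omega}_1(\ZZ(\GG_q))$. Your extra care in verifying that non-degeneracy of the trace pairing makes the restricted characters a basis is a point the paper passes over more quickly, but it is the same argument.
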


\begin{proof} First note that $\ZZ(\GG_q)\cong \ZZ(\g_q)$ and so 
\begin{equation}\label{Z(G)}
\widehat{\Omega}_1(\ZZ(\GG_q))\cong\widehat{\Omega}_1(\ZZ(\g_q))=\bigoplus_{(i,j)\in \Iex} \widehat{\Omega}_1(\F_q)\cong\bigoplus_{(i,j)\in \Iex}\F_q.
\end{equation}
Let $\omega_1,\dots,\omega_f$ be a basis of $\F_q$ over $\F_p$. For $(i,j)\in \Iex$ and  $1\leq l\leq f$, define the vectors 
${\bf b}_{l}(i,j)\in \bigoplus_{s\prec t}\F_q$,
with the $(i,j)$ coordinate equal to $\omega_l$ and the other coordinates equal to $0$. Then the set 
$$
\{\bb_{l}(i,j): 1\leq l\leq f, \quad (i,j)\in \Iex\}
$$
is a basis of $\ZZ(\g_q)$ as an $\F_p$-vector space. It follows that the set
$$\{\psi_{\bb_l(i,j)}: 1\leq l\leq f,\quad (i,j)\in \Iex\}$$
  is a basis of $\widehat{\Omega}_1(\ZZ(\g_q))$ and thus of $\widehat{\Omega}_1(\ZZ(\GG_q))$ by~\eqref{Z(G)}.
Since $\psi_{\bb_{l}(i,j)}$ is the central character of $\rho_{\bb_l(i,j)}$, it follows from Lemma~\ref{central-span-faith} that the representation
$$\rho:=\bigoplus_{1\leq l\leq f}\bigoplus_{(i,j)\in \Iex}\rho_{\bb_l(i,j)}$$
is faithful.
 By Lemma~\ref{level-lemma} the dimension of $\rho_{\bb_l(i,j)}$ is equal to $q^{\alpha(i,j)}$ and hence
\begin{equation}\label{min-dim}
\dim\rho=\sum_{(i,j)\in \Iex} fq^{\alpha(i,j)}.
\end{equation}
This finishes the proof. 
\end{proof}
We are now ready to prove Theorem~\ref{partial-order-Lie}.

\subsection{Proof of Theorem~\ref{partial-order-Lie}} Write $m:= \# \Iex$ and $n:=fm$. As before, we identify 
$\widehat{\Omega}_1(\ZZ(\g_q))$ with $\bigoplus_{(i,j)\in \Iex}\F_q$ which has dimension $n$ as an $\F_p$-vector space.
 Let $\rho$ be a faithful representation of $\GG_q$ with the smallest possible dimension. We will show that the dimension of $\rho$ is bounded from below by the 
right hand side of~\eqref{min-dim}. 
Using Lemma~\ref{Meyer} we can decompose $\rho$ as a direct sum of $n$ irreducible representations, each of which obtained via the orbit method as described above. Hence, we can write
\begin{equation*}
\rho= \bigoplus_{k=1}^n \rho_{{\bf a}_k},
\end{equation*}
with vectors $\ba_k$ given by
$${\bf a}_{k}=\left(a_{st}(k)\right)_{s\prec t}\in\bigoplus_{s\prec t}\F_q.$$
Since 
the central character of $\rho_{\ba_k}$
is the restriction of $\psi_{\ba_k}$,  Lemma~\ref{Meyer} implies that the set $\{\psi_{\ba_k}: 1\leq k\leq n\}$ is a basis of $\widehat{\Omega}_1(\ZZ(\GG_q))$ and therefore the set
\begin{equation*}
\left\{ \left(a_{ij}(k)\right)_{(i,j)\in\Iex}: 1\leq k\leq n \right\}
\end{equation*} 
is a basis of the $\F_p$-vector space $\bigoplus_{(i,j)\in \Iex}\F_q$.
At this point we  need a combinatorial lemma whose proof relies on a theorem of Rado and Horn.

\begin{lemma}\label{blocks}
Let $V$ be an $m$-dimensional  $\F_q$-vector space. Suppose $S=\{v_1,\dots,v_{fm}\}$ is a basis of $V$ viewed as 
a vector space over the subfield $\F_p$. Then 
there exists a partition $S_1,\dots,S_f$
of $S$ into $f$ sets of size $m$ such that each $S_i$ is a basis of $V$ as an $\F_q$-vector space. 
\end{lemma}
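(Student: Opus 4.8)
The plan is to deduce Lemma~\ref{blocks} from the Rado--Horn theorem on partitioning a family of vectors into linearly independent subfamilies (equivalently, from the matroid union theorem applied to $f$ copies of the $\F_q$-linear matroid on $S$). Recall that this result states that a finite list of vectors $w_1,\dots,w_N$ in an $\F_q$-vector space can be partitioned into $f$ subsets, each linearly independent over $\F_q$, if and only if
\[
\#T\le f\cdot\dim_{\F_q}\Span_{\F_q}(T)\qquad\text{for every }T\subseteq\{w_1,\dots,w_N\}.
\]
I would first record the easy reduction: since $\dim_{\F_q}V=m$, every $\F_q$-linearly independent subset of $S$ has size at most $m$, whereas $\#S=\dim_{\F_p}V=fm$; hence any partition of $S$ into $f$ pieces that are independent over $\F_q$ must consist of $f$ pieces each of size exactly $m$ (the $f$ sizes are each $\le m$ and sum to $fm$), and a linearly independent set of $m=\dim_{\F_q}V$ vectors is automatically a basis. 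So it suffices to produce \emph{some} partition of $S$ into $f$ subsets that are independent over $\F_q$, i.e.\ to check the Rado--Horn hypothesis for $S$.

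To verify that hypothesis, let $T\subseteq S$ be arbitrary and set $W:=\Span_{\F_q}(T)$ and $d:=\dim_{\F_q}W$. Regarding $W$ as a vector space over $\F_p$ we have $\dim_{\F_p}W=fd$, because $\F_q/\F_p$ is an extension of degree $f$. On the other hand $T$ is a subset of $S$, which is linearly independent over $\F_p$ by hypothesis, so $T$ is linearly independent over $\F_p$ as well; since $T\subseteq W$ this forces $\#T\le\dim_{\F_p}W=fd=f\dim_{\F_q}\Span_{\F_q}(T)$. This is exactly the Rado--Horn condition, so the theorem provides a partition $S=S_1\sqcup\cdots\sqcup S_f$ into $\F_q$-independent sets, and by the reduction of the previous paragraph each $S_i$ has size $m$ and is an $\F_q$-basis of $V$.

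I do not expect any real obstacle here: the content is entirely in the (standard) Rado--Horn / matroid union theorem, and the only thing to get right is the bookkeeping step that upgrades ``partition into $f$ independent sets'' to ``partition into $f$ bases of the same size'', which uses nothing beyond $\#S=fm$, the degree formula $\dim_{\F_p}W=f\dim_{\F_q}W$, and the fact that subsets of $S$ inherit $\F_p$-independence. (If one preferred to avoid quoting Rado--Horn, the same estimate $\#T\le f\dim_{\F_q}\Span_{\F_q}(T)$ drives a direct inductive exchange argument, but invoking the theorem is cleanest.)
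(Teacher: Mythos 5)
Your proof is correct and follows essentially the same route as the paper: both verify the Rado--Horn hypothesis by observing that an $\F_p$-independent subset $T$ of $S$ sits inside $\Span_{\F_q}(T)$, which has $\F_p$-dimension $f\dim_{\F_q}\Span_{\F_q}(T)$, and then both upgrade the resulting $\F_q$-independent parts to bases by the size count $\#S=fm$ with each part of size at most $m$. The only cosmetic difference is that the paper phrases the dimension comparison via cardinalities of spans ($p^{\#J}\le p^{fd}$) rather than directly via $\F_p$-dimensions.
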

We will use the following theorem of Rado and Horn~\cite{Horn}. The proof of this theorem itself is based on Hall's marriage theorem and ideas from 
matroid theory. We refer the reader to~\cite[Section 18]{Bollobas} for more details.

\begin{theorem}[(Rado-Horn)]\label{RH-Theorem} Let $V$ be a vector space over a field $E$ and let $\{v_i: 1\leq i\leq M\}$ be  a set of non-zero vectors in $V$. Then the following statements are equivalent: 
\begin{enumerate}
\item The set $\{1,\dots, M\}$ can be partitioned into sets $\{\mathcal{A}_j\}_{j=1}^k$ such that $\{v_i: i\in \mathcal{A}_j\}$ is a linearly independent set for all $j = 1 , 2 , \dots , k$.
\item For all non-empty subsets $J\subseteq \{ 1 , \dots, M \}$,
$$
\# J \leq k\dim_E \mathrm{Span}_{E}\{v_j: j\in J\}.
$$
\end{enumerate}
\end{theorem}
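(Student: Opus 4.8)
The plan is to recognize this as a matroid-theoretic statement and reduce the substantive half to a standard covering theorem. First I would dispose of the implication $(1)\Rightarrow(2)$, which is elementary. Suppose $\{\mathcal{A}_j\}_{j=1}^k$ is a partition of $\{1,\dots,M\}$ with each $\{v_i:i\in\mathcal{A}_j\}$ linearly independent, and fix a non-empty $J$. Writing $r(J):=\dim_E\mathrm{Span}_E\{v_j:j\in J\}$, each set $\{v_i:i\in J\cap\mathcal{A}_j\}$ is a linearly independent family lying inside $\mathrm{Span}_E\{v_j:j\in J\}$, so $\#(J\cap\mathcal{A}_j)\leq r(J)$. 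Since the $\mathcal{A}_j$ partition $J$, summing over $j$ gives
\[
\#J=\sum_{j=1}^k\#(J\cap\mathcal{A}_j)\leq k\,r(J),
\]
which is exactly $(2)$.

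\textbf{Reformulation as a matroid covering problem.} Next I would set up the vector matroid $\mathcal{M}$ on the ground set $S:=\{1,\dots,M\}$ whose independent sets are the $A\subseteq S$ for which $\{v_i:i\in A\}$ is linearly independent over $E$; its rank function is the monotone, submodular map $A\mapsto r(A)=\dim_E\mathrm{Span}_E\{v_i:i\in A\}$. In this language statement $(1)$ asserts precisely that $S$ can be partitioned into $k$ independent sets of $\mathcal{M}$. Because every subset of an independent set is independent, covering $S$ by $k$ independent sets and partitioning $S$ into $k$ independent sets are equivalent, so $(1)$ is the matroid \emph{covering} condition. The nontrivial implication $(2)\Rightarrow(1)$ is then the Edmonds--Nash-Williams covering theorem, and I would derive it from the matroid union theorem applied to $k$ copies of $\mathcal{M}$: the union $\mathcal{M}^{\vee k}$ has rank function
\[
r_\vee(A)=\min_{B\subseteq A}\bigl(\#(A\setminus B)+k\,r(B)\bigr),
\]
and $S$ partitions into $k$ independent sets exactly when $S$ is independent in $\mathcal{M}^{\vee k}$, i.e.\ $r_\vee(S)=\#S$. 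Taking $B=\emptyset$ shows $r_\vee(S)\leq\#S$ always, and equality holds iff $\#(S\setminus B)+k\,r(B)\geq\#S$, equivalently $\#B\leq k\,r(B)$, for every $B\subseteq S$; on non-empty $B$ this is precisely $(2)$ (the empty set being trivial), completing the reduction.

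\textbf{Where Hall enters, and the main obstacle.} The one genuinely hard ingredient is the matroid union theorem itself, namely that the ``union independence'' structure is again a matroid and that its rank obeys the displayed min-formula. I would prove this by an augmenting/exchange argument: starting from disjoint independent sets $A_1,\dots,A_k$ whose union is not of maximum size, one constructs an augmenting sequence enlarging the cover, and the failure to augment isolates a ``tight'' set $B$ realizing the minimum, hence certifying the formula. This is exactly the point at which Hall's marriage theorem is used: in the representable case the union independence can be encoded as a system-of-distinct-representatives condition spread across the $k$ copies of $\mathcal{M}$, and the deficiency form of Hall's theorem simultaneously supplies the augmentation and exhibits the extremal subset. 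The main obstacle is therefore this min-max rank identity---equivalently, showing that if $S$ \emph{cannot} be covered by $k$ independent sets then some $J$ must violate $\#J\leq k\,r(J)$; everything around it is bookkeeping. As a self-contained alternative that sidesteps the general union theorem, I would argue by induction on $k$: assuming $(2)$, peel off a single independent set $\mathcal{A}_k\subseteq S$ chosen so that $S\setminus\mathcal{A}_k$ still satisfies the inequality with $k-1$ in place of $k$ (the existence of such a ``good'' independent set being the Hall/augmenting step), then apply the inductive hypothesis, the base case $k=1$ being the assertion that $(2)$ forces $S$ itself to be independent.
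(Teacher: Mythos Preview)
Your outline is correct, but you should know that the paper does not actually prove this theorem: it is quoted as a known result of Rado and Horn, with the remark that ``the proof of this theorem itself is based on Hall's marriage theorem and ideas from matroid theory,'' and a reference to \cite[Section 18]{Bollobas} for details. Your approach---the elementary direction $(1)\Rightarrow(2)$, then the reduction of $(2)\Rightarrow(1)$ to the matroid union/covering theorem for $k$ copies of the vector matroid, with Hall's theorem driving the augmenting/exchange step---is exactly the standard route hinted at by that remark, so there is no discrepancy to report.
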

 
\begin{proof}[Proof of Lemma~\ref{blocks}] We apply Theorem~\ref{RH-Theorem} with $k=f$ to the set of vectors $S$. Consider
an arbitrary set  $J\subseteq \{1,\dots,mf\}$ and let $d=\dim_{\F_q}\mathrm{Span}_{\F_q}\{v_j: j\in J\}$. Then 
\[
\#\mathrm{Span}_{\F_q}\{v_j: j\in J\}=q^d=p^{fd}
.
\] 
Clearly $\mathrm{Span}_{\F_p}\{v_j: j\in J\}\subseteq\mathrm{Span}_{\F_q}\{v_j: j\in J\}$ and since $\{v_j: j\in J\}$ is linearly independent over $\F_p$, we obtain 
$$
p^{ \# J }=\#\mathrm{Span}_{\F_p}\{v_j: j\in J\}\leq \#\mathrm{Span}_{\F_q}\{v_j: j\in J\}=p^{fd}.
$$
It follows from the above inequality that
$$
\# J \leq f \dim_{\F_q}\mathrm{Span}_{\F_q}\{v_j: j\in J\}.
$$
Thus by the Rado-Horn theorem, the set $\{1,\dots,fm\}$ 
can be partitioned into $\mathcal{A}_1,\dots,\mathcal{A}_f$ such that each of the sets
$\{v_\ell: \ell\in \mathcal{A}_i\}$
 is linearly independent over $\F_q$. 
Note that 
$\# \mathcal{A}_i\leq m$ 
 since $\dim_{\F_q}V=m$. But the $\mathcal{A}_i$'s partition $\{1,\dots,mf\}$ and so $\mathcal{A}_i$ has the size $m$ which implies that $\{v_\ell: \ell\in \mathcal{A}_i\}$ is a basis of $V$ over $\F_q$. 
\end{proof}

We return to the proof of Theorem~\ref{partial-order-Lie}. Set $V=\bigoplus_{(i,j)\in \Iex}\F_q$, which is an $\F_p$-vector space of dimension $n=mf$. Also set $v_k=(a_{ij}(k))_{(i,j)\in\Iex}\in V$. Recall that $S=\{v_k: 1\leq k\leq n\}$ is a basis of $V$ as an $\F_p$-vector space and so by Lemma~\ref{blocks} there exist $f$ disjoint sets $S_1,\dots,S_f$, each of size $m$, such that each $S_\ell$ is a basis of $V$ as 
an $\F_q$-vector space.  
For $1\leq \ell\leq f$, let $A_\ell$ denote an $m\times m$ matrix whose rows are elements of $S_\ell$. Note that $A_\ell$ is invertible, since $S_\ell$ is a basis of $V$ as an $\F_q$-vector space. Using the Leibniz expansion of the determinant of $A$,  we can assume that up to a permutation of the rows, all of the diagonal entries of $A_\ell$ are non-zero. 
Thus Proposition~\ref{level-prop-Min} implies that 
$$
\sum_{(i,j)\in \Iex}q^{\alpha(i,j)}\leq \sum_{\ba_k\in S_\ell} \dim\rho_{\ba_k}
\ \text{ for  $1 \le \ell \le f$}
.
$$
Summing over all $\ell$, we obtain
$$
f\sum_{(i,j)\in \Iex}q^{\alpha(i,j)}\leq \dim\rho,
$$
which finishes the proof.
 
\section{The commutator matrix of nilpotent Lie algebras}\label{Nilpotent-Section} 
We now consider general nilpotent Lie algebras by rebuilding the argument presented in Section~\ref{Poset-Section}. Let $\g$ be a nilpotent $\Z$-Lie algebra  of nilpotency class $c$ 
which is finitely generated as an abelian group,
and let $\F_q$ be a finite field with $q=p^f$ elements. 
We set $\g_q:=\g\otimes_\Z\F_q$ throughout this section.
In order to apply the orbit method, we will also assume that $p>c$.
Existence of torsion elements in $\g$ and some of its quotients results in some technical difficulties which are addressed in what follows.

We call a subset $S$ of a finitely generated abelian group $\Gamma$ a \emph{semibasis} if it represents a basis over $\Z$ of the free abelian group 
$\Gamma/\Gamma_\mathrm{tor}$, where $\Gamma_\mathrm{tor}$ denotes the subgroup of torsion elements of $\Gamma$. Clearly $\#S=\rank_\Z\Gamma$. We define $\sfe(\Gamma)$ to be the largest prime divisor of the exponent of $\Gamma_{\mathrm{tor}}$.

\begin{remark}
\label{rmk:tensoringn}
Let $v_1,\ldots,v_d$ be $\Z$-linearly independent vectors in a finitely generated abelian group $\Gamma$ such that $\rank_\Z(\Gamma)=d$, and
let $M$ be the subgroup of $\Gamma$ generated by the $v_i$'s. Set $q:=p^f$, where $f$ is a positive integer and $p$ is a prime such that $p>\sfe(\Gamma/M)$.
Then the elements $v_1\otimes_\Z 1,\ldots,v_d\otimes_\Z 1$ form a basis of the $\F_q$-vector space
$\Gamma_q:=\Gamma\otimes_\Z\F_q$.
 \end{remark}

\begin{remark}
\label{rmk:rmk52}
For every prime $p$ we have $[\g,\g]_q=[\g_q,\g_q]$. 
The equality $\ZZ(\g_q)=\ZZ(\g)_q$ also holds for $p$ sufficiently large. An explicit lower bound for $p$ can be obtained as follows. Let $\vv_1,\ldots,\vv_n$ be a semibasis of $\g/\ZZ(\g)$, and let $\ww_1,\ldots,\ww_m$ be a semibasis of $[\g,\g]$. Then for $1\leq i<j\leq n$ we can write
$[\vv_i,\vv_j]=\sum_{k=1}^m\lambda_{ij}^k \ww_k+\mathbf{y}_{ij}$, where $\lambda_{ij}^k\in\Z$ and $\mathbf{y}_{ij}\in[\g,\g]_\mathrm{tor}$. Setting $x_{j+n(k-1),i}:=\lambda_{ij}^k$, we obtain an $mn\times n$ matrix $X:=[x_{a,b}]$. Now for every $n\times n$ submatrix $X'$ of $X$ we define $\mathsf{m}(X'):=\max\{p\,:\,p|\det(X')\}$, where $\mathsf{m}(X'):=1$ whenever $\det(X')=\pm 1$.
Further, set $\mathsf{m}(X):=\min_{X'}\{\mathsf{m}(X')\}$, where the minimum is taken over $n\times n$ submatrices of $X$. For the equality $\ZZ(\g_q)=\ZZ(\g)_q$ it is enough to assume that 
$p>C_1$, where 
$$C_1:=\max\{\mathsf{m}(X),\sfe([\g,\g]),\sfe(\g/[\g,\g])\}.$$
\end{remark} 
 
Now let $\ww_1,\ldots,\ww_{l_1}$ be a semibasis of $\ZZ(\g)\cap[\g ,\g]$. 
Let $\ww_{l_1+1},\ldots,\ww_{m}$ be
 elements of 
$\g $ which represent a semibasis of 
$[\g,\g]/(\ZZ(\g)\cap [\g,\g])$. Finally, let $\zz_1,\ldots,\zz_{l_2}$ be elements of $\g$ which represent a semibasis of $\ZZ(\g)/(\ZZ(\g)\cap [\g,\g])$.
It is straightforward to verify that the vectors 
$\{\ww_1,\ldots,\ww_m,\zz_1,\ldots,\zz_{l_2}\}$ are $\Z$-linearly independent.
Clearly the choice of these vectors implies that $\rank_\Z(\ZZ(\g)+[\g,\g])=l_2+m$. 
Let $M$ be the $\Z$-submodule of $\g$ generated by 
the $\ww_i$, $1\leq i\leq m$, and the $\zz_j$, $1\leq j\leq l_2$, and set 
$$C_2:=\sfe(\g /M).$$  From Remark \ref{rmk:tensoringn} it follows that if $p>\max\{C_1,C_2\}$, where $C_1$ is defined in  Remark \ref{rmk:rmk52},
then after tensoring with $\F_q$, these vectors form a basis of the $\F_q$-vector space 
$[\g_q,\g_q]+\ZZ(\g_q)$.

Now let $\vv'_1,\ldots,\vv'_n$ be elements of $\g$ which represent a semibasis of $\g/\ZZ(\g)$. 
For $1\leq i<j\leq n$, there exist integers $\eta_{ij}^k$, $1\leq k\leq m$, such that the elements 
\[
\vv'_{i,j}:=\left(
[\vv'_i, \vv'_j]-\sum_{k=l_1+1}^m\eta^k_{ij} \ww_k
\right)
\in[\g,\g] \]
are torsion modulo 
$[\g,\g]\cap\ZZ(\g)$.
Set $K$ 
equal to 
 the exponent of 
$\left(
[\g,\g]/[\g,\g]\cap\ZZ(\g)\right)_\mathrm{tor}$. It follows that $K\vv'_{i,j}\in[\g,\g]\cap\ZZ(\g)$ for every $1\leq i<j\leq n$. Now set
$\vv_i:=K\vv'_i$ for $1\leq i\leq n$. Then there exist integers $\lambda_{ij}^k$ such that
\begin{equation}\label{commutator}
[\vv_i, \vv_j]=\sum_{k=1}^m\lambda^k_{ij} \ww_k+\mathbf{x}_{ij}
\text{ for every $1\leq i<j\leq n$}, 
\end{equation}
where $\mathbf x_{ij}\in\big([\g,\g]\cap\ZZ(\g)\big)_\mathrm{tor}$.
We remark that $\lambda_{ij}^k=K^2\eta_{ij}^k$ for
$l_1+1\leq k\leq m$.

For each $1\leq i, j\leq n$, we define 
the linear forms
\begin{equation*}\label{Lambda}
\Lambda_{ij}(T_1,\dots,T_m):= \sum_{k=1}^m \lambda_{ij}^k T_k\in \Z[T_1,\dots,T_m]. 
\end{equation*}
It is clear that $\Lambda_{ii}=0$ and $\Lambda_{ij}=-\Lambda_{ji}$ for $1\leq i,j \le n$. The {\it commutator matrix of $\g$}
(relative to the chosen ordered basis) is the skew-symmetric matrix of linear forms defined by 
\begin{equation}\label{commutating-matrix}
F_\g(T_1,\dots, T_m):=[ \Lambda_{ij}(T_1, \dots, T_m)]_{1 \le i,j \le n }
\in \mathrm{M}_n(\Z[T_1,\dots,T_m]). 
\end{equation}
This matrix has  previously been used 
in several papers, such as those by
 Grunewald and Segal~\cite{Grunwald-Segal},  
Voll~\cite{Voll04,Voll05}, 
O'Brien and Voll
\cite{Voll},
Avni, Klopsch, Onn and Voll \cite{Avni},
and Stasinski and Voll \cite{Stasinski-Voll}.

  The following theorem expresses the faithful dimension of $\GG_q$ as the solution to a rank minimization problem. 
For the next theorem, we set
$$C_3 :=\sfe\left((\g/\ZZ(\g))_\mathrm{tor}^{}\right).$$
\begin{theorem}\label{Algorithm}
Let $\g$ be a nilpotent $\Z$-Lie algebra of nilpotency class $c$ which is finitely generated as an abelian group.  
If
$p>\max\{c,C_1,C_2,C_3\}$, then
\begin{equation*}
\mf(\GG_q)=\min\left\{ \sum_{\ell=1}^{l_1} fq^{\frac{\rank_{\F_q}(F_\g(x_{\ell 1},\dots,x_{\ell m}))}{2}}: \begin{pmatrix}
x_{11} & \dots & x_{1l_1}\\
\vdots  & \ddots & \vdots\\
 x_{l_1 1} & \dots & x_{l_1 l_1}
\end{pmatrix}\in\mathrm{GL}_{l_1}(\F_q)\right\}+fl_2,
\end{equation*} 
where $m:=\rank_\Z([\g,\g])$, 
$l_1:=\rank_\Z([\g,\g]\cap\ZZ(\g))$ and $l_2:=\rank_\Z(\ZZ(\g)/\ZZ(\g)\cap[\g,\g])$. 
\end{theorem}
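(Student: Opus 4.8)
The plan is to prove the equality by matching an upper and a lower bound, the key tools being Kirillov's orbit method (Theorem~\ref{Kirillov}), the commutator matrix $F_\g$, and the Rado--Horn-type partition of Lemma~\ref{blocks}.

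\emph{Setup.} Since $p>c$, Theorem~\ref{Kirillov} identifies the irreducible representations of $\GG_q$ with the coadjoint orbits on $\widehat{\g_q}$, the central character of $\rho_\theta$ being $\theta|_{\ZZ(\g_q)}$. Work with the semibases $\ww_1,\dots,\ww_{l_1}$ of $\ZZ(\g)\cap[\g,\g]$, $\ww_{l_1+1},\dots,\ww_m$, $\zz_1,\dots,\zz_{l_2}$ and the vectors $\vv_i$ fixed around~\eqref{commutator}. The hypothesis $p>\max\{c,C_1,C_2,C_3\}$ together with Remarks~\ref{rmk:tensoringn} and~\ref{rmk:rmk52} guarantees that $\ZZ(\g_q)=\ZZ(\g)_q$, that $\{\ww_k\otimes1\}_k\cup\{\zz_j\otimes1\}_j$ is an $\F_q$-basis of $[\g_q,\g_q]+\ZZ(\g_q)$, that the $\vv_i\otimes1$ descend to an $\F_q$-basis of $\g_q/\ZZ(\g_q)$ (of size $n$), and that the torsion terms $\mathbf x_{ij}$ in~\eqref{commutator} vanish in $\g_q$, so $[\vv_i\otimes1,\vv_j\otimes1]=\sum_k\overline{\lambda^k_{ij}}\,(\ww_k\otimes1)$. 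Using the primitive character $\psi$ I encode $\theta$ by $b(\theta):=(\theta(\ww_1),\dots,\theta(\ww_m))\in\F_q^m$; then $\theta|_{\ZZ(\g_q)}$ is recorded by $(\beta(\theta),\gamma(\theta))\in\F_q^{l_1}\times\F_q^{l_2}$, where $\beta(\theta)$ lists the first $l_1$ coordinates of $b(\theta)$. Expanding an element of $\g_q/\ZZ(\g_q)$ in $\{\vv_i\otimes1\}$ and using non-degeneracy of $\psi$ in Proposition~\ref{stab} identifies $\mathrm{Stab}_{\GG_q}(\theta)/\ZZ(\g_q)$ with $\ker F_\g(b(\theta))$, so by~\eqref{dim-Kirillov}
\begin{equation*}
\dim\rho_\theta=q^{\,\rank_{\F_q}F_\g(b(\theta))/2}
\end{equation*}
(the exponent is an integer, $F_\g$ being alternating and $p\neq2$). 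For $\beta\in\F_q^{l_1}$ put $\delta(\beta):=\min\{\tfrac12\rank_{\F_q}F_\g(b):b\in\F_q^m,\ b\text{ has first }l_1\text{ coordinates }\beta\}$; linearity of $F_\g$ makes $\delta$ constant on $\F_q^\times$-orbits.

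\emph{Upper bound.} Unwinding the minimum in the statement, a matrix $[x_{\ell k}]$ with invertible left $l_1\times l_1$ block is the same as an $\F_q$-basis $\beta_1,\dots,\beta_{l_1}$ of $\F_q^{l_1}$ together with arbitrary extensions to $\F_q^m$; minimizing over the extensions shows the right-hand side equals $fl_2+f\mu$, where $\mu:=\min\{\sum_{\ell=1}^{l_1}q^{\delta(\beta_\ell)}:\ \beta_1,\dots,\beta_{l_1}\text{ an }\F_q\text{-basis of }\F_q^{l_1}\}$. To realize this value I would pick a basis $\beta_1,\dots,\beta_{l_1}$ attaining $\mu$, extensions $b_\ell\in\F_q^m$ with $\tfrac12\rank F_\g(b_\ell)=\delta(\beta_\ell)$, and an $\F_p$-basis $\omega_1,\dots,\omega_f$ of $\F_q$. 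Since $\ZZ(\g_q)$ and $[\g_q,\g_q]$ meet exactly in $\Span_{\F_q}\{\ww_1\otimes1,\dots,\ww_{l_1}\otimes1\}$ and $\g_q$ is a vector space, for each $\ell,s$ there is a character $\theta_{\ell,s}$ with $b(\theta_{\ell,s})=\omega_sb_\ell$ and central character $(\omega_s\beta_\ell,0)$ — the two prescriptions agree on the overlap — and for each $j\le l_2$, $s\le f$ a character $\theta'_{j,s}$ with $b(\theta'_{j,s})=0$ and central character $(0,\omega_s\mathbf e_j)$. Then $\dim\rho_{\theta_{\ell,s}}=q^{\delta(\beta_\ell)}$ and $\dim\rho_{\theta'_{j,s}}=1$, and the $(l_1+l_2)f$ central characters $\{(\omega_s\beta_\ell,0)\}\cup\{(0,\omega_s\mathbf e_j)\}$ form an $\F_p$-basis of $\F_q^{l_1+l_2}$; by Lemma~\ref{central-span-faith} the direct sum of these irreducibles is faithful, of dimension $f\mu+fl_2$. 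Hence $\mf(\GG_q)\le fl_2+f\mu$.

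\emph{Lower bound.} Let $\rho=\bigoplus_{t=1}^r\rho_{\theta_t}$ be a faithful representation of minimal dimension. By Lemma~\ref{Meyer}, $r=d(\ZZ(\GG_q))=(l_1+l_2)f$ and the central characters, recorded as $(\beta^{(t)},\gamma^{(t)})$ with $\beta^{(t)}=\beta(\theta_t)$, form an $\F_p$-basis of $\widehat{\Omega}_1(\ZZ(\GG_q))\cong\F_q^{l_1+l_2}$. By the setup, $\dim\rho_{\theta_t}=q^{\rank_{\F_q}F_\g(b(\theta_t))/2}\ge q^{\delta(\beta^{(t)})}$. Applying Lemma~\ref{blocks} to this $\F_p$-basis of the $\F_q$-space $\F_q^{l_1+l_2}$, the index set $\{1,\dots,r\}$ partitions into $f$ blocks $U_1,\dots,U_f$, each of size $l_1+l_2$ and each mapping to an $\F_q$-basis of $\F_q^{l_1+l_2}$. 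Projecting onto the first factor, $\{\beta^{(t)}:t\in U_j\}$ spans $\F_q^{l_1}$, hence contains a subset $B_j$ of size $l_1$ that is an $\F_q$-basis; the remaining $l_2$ indices $t\in U_j\setminus B_j$ contribute $q^{\delta(\beta^{(t)})}\ge1$ each, so $\sum_{t\in U_j}q^{\delta(\beta^{(t)})}\ge\mu+l_2$. Summing over $j$ gives $\dim\rho\ge\sum_tq^{\delta(\beta^{(t)})}\ge f(\mu+l_2)$, which matches the upper bound.

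The main obstacle is the lower bound: one must rule out that some unanticipated minimal faithful representation beats the construction of the upper bound. The mechanism is Lemma~\ref{blocks} (Rado--Horn), which reorganizes an \emph{arbitrary} $\F_p$-basis of central characters into $f$ parallel $\F_q$-bases — precisely the ``$f$ copies'' structure visible in the formula — after which the estimate $\dim\rho_\theta\ge q^{\delta(\beta(\theta))}$ collapses the problem to the finite combinatorial minimization defining $\mu$. A secondary but genuine source of care is the torsion bookkeeping of the setup: the bounds $C_1,C_2,C_3$ on $p$ are exactly what is needed for $F_\g$ over $\F_q$ to faithfully record the Lie bracket and the centre of $\g$ after reduction modulo $p$.
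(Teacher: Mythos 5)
Your proof is correct and follows essentially the same route as the paper's: the orbit-method dimension formula $\dim\rho_\theta = q^{\rank_{\F_q} F_\g(b(\theta))/2}$, Lemma~\ref{Meyer} to decompose a minimal faithful representation into $(l_1+l_2)f$ irreducibles whose central characters form an $\F_p$-basis of $\widehat{\Omega}_1(\ZZ(\GG_q))$, Lemma~\ref{central-span-faith} for the converse construction, and the Rado--Horn partition of Lemma~\ref{blocks} for the lower bound. Your auxiliary function $\delta$ and the scalar $\mu$ merely repackage the paper's two-step reduction (first to ``regular sets'' of size $l_1+l_2$, then to the invertible $l_1\times l_1$ block), so the content is the same.
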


\begin{remark}\label{reamr-assumption} 
We note that when $n=0$, the commutator matrix is the zero matrix and thus from Theorem~\ref{Algorithm} we obtain $\mf(\GG_q)=(l_1+l_2)f$. This formula can also be  obtained from Lemma~\ref{Tensor-pabelian-Lemma} since in this case for
$p$ as in Theorem
\ref{Algorithm} 
the group $\GG_q$ is abelian.
\end{remark}

\subsection{Proof of Theorem \ref{Algorithm}}

In this section we assume that $p$ is chosen as in 
Theorem \ref{Algorithm}.
By abuse of notation, we denote the images in $\g_q$ of the $\vv_i$'s, the $\ww_i$'s and the $\zz_i$'s that are chosen above by the same letters.
Let $\psi:\F_q\to \C^*$ be the primitive additive character defined in Section~\ref{Poset-Section}. Choose a basis $$\left\{\uu_1+(\ZZ(\g_q)+\g_q'),\dots, \uu_{l_3}+(\ZZ(\g_q)+\g_q')\right\}$$
of $\g_q/(\ZZ(\g_q)+\g_q')$.  
Since $p>C_2$,  the set
\begin{equation*}
\{\ww_1,\dots, \ww_{l_1}, \ww_{l_1+1},\dots, \ww_m,\zz_1,\dots, \zz_{l_2},\uu_1,\dots,\uu_{l_3}\}
\end{equation*}
is a basis of $\g_q$.
For 
\begin{equation}\label{vec-assigne}
\ba=(a_1,\dots,a_{l_1},a_{l_1+1},\dots,a_{m},b_{1},\dots,b_{l_2},c_1,\dots,c_{l_3})\in\F_q^{m+l_2+l_3},
\end{equation}
let $\psi_\ba\in\widehat{\g}_q$ be defined by
\begin{equation*}
\begin{split}
&\psi_\ba\left(\sum_{i=1}^{l_1}w_i\ww_i+\sum_{i=l_1+1}^{m}w_i\ww_i+\sum_{i=1}^{l_2}z_i\zz_i+\sum_{i=1}^{l_3}u_i\uu_i\right)
:=\\
&\qquad\qquad
\qquad\qquad\qquad\qquad
\psi\left(\sum_{i=1}^{l_1}a_iw_i+\sum_{i=l_1+1}^{m}a_iw_i+\sum_{i=1}^{l_2}b_iz_i+\sum_{i=1}^{l_3}c_iu_i\right).
\end{split}
\end{equation*}
The assignment $\ba\mapsto \psi_\ba$  identifies $\widehat{\g}_q$ with $\F_q^{m+l_2+l_3}$.
For $\ba$ as in~\eqref{vec-assigne}, we write
$\ba=(\ba',\ba'',\mathbf b,\mathbf c)$,
where $\ba'\in\F_q^{l_1}$, $\ba''\in\F_q^{m-\ell_1}$,  $\mathbf b\in\F_q^{l_2}$, and $\mathbf c\in\F_q^{l_3}$,
and define the projection maps
\[
\pr_1: \F_q^{m+l_2+l_3}\longrightarrow \F_q^{m+l_2}
\quad,\quad 
\pr_2: \F_q^{m+l_2+l_3} \longrightarrow \F_q^{l_1+l_2}\quad,\quad
\pr_3: \F_q^{m+l_2+l_3} \longrightarrow \F_q^{m},
\]
by 
$\pr_1(\ba)=(\ba',\ba'',\mathbf b)$, 
$\pr_2(\ba)=(\ba',\mathbf b)$, and 
$\pr_3(\ba)=(\ba',\ba'')$. 

In the rest of this section, we identify $\ZZ(\GG_q)$ with $\ZZ(\g_q)$. 
Let $\rho$ be an irreducible representation of $\GG_q$. By the orbit method,  $ \rho$ is obtained from a character $\theta\in\widehat{\g}_q$, whose restriction to $\ZZ(\g_q)$ coincides with the central character of $\rho$.
Assume that $\theta=\psi_\ba$ for some $\ba\in\F_q^{m+l_2+l_3}$, whose entries are indexed as in~\eqref{vec-assigne}.
Our next goal is to prove that 
\begin{equation}\label{dim-computation}
\dim\rho=q^{\frac{\rank_{\F_q}(F_\g(a_1,\dots, a_m))}{2}}=q^{\frac{\rank_{\F_q}(F_\g(\pr_3(\ba)))}{2}}.
\end{equation}
The proof is similar to the argument of \cite[Lemma 3.3]{Voll}, but for the reader's convenience we provide some details. 
Proposition \ref{stab} implies that $\ZZ(\g_q)\subseteq \mathrm{Stab}_{\GG_q}(\theta)$.
Since $p>C_3$, it follows that $K$ is invertible in $\F_q$, so that  after tensoring by $\F_q$ the $\vv_i$'s form a basis of $\g_q/\ZZ(\g_q)$.
For
$x=\sum_{i=1}^nx_i\vv_i\in \mathrm{Stab}_{\GG_q}(\theta)$ 
and $y=\sum_{i=1}^ny_i\vv_i\in\g_q/\ZZ(\g_q)$ we
have $\theta([x,y])=1$.
From~\eqref{commutator}  
and the fact that $p>C_1$ it follows that
\begin{equation*}
\begin{split}
\psi\left(\sum_{i=1}^n\left(\sum_{1\leq r<i}\sum_{k=1}^m a_k\lambda^k_{ri}x_r-\sum_{i<s\leq n}\sum_{k=1}^m a_k\lambda^k_{is}x_s\right)y_i\right)
&=\psi_{\bf a}\left(\sum_{1\leq i<j\leq n}\sum_{k=1}^m \lambda^k_{ij}(x_iy_j-x_jy_i)\ww_k\right)
=1.
\end{split}
\end{equation*}
Since  $\psi$ is a primitive character, 
it follows that $\mathrm{Stab}_{\GG_q}(\theta)/\ZZ(\g_q)$ is defined by the linear equations
\begin{equation*}
\sum_{i<s\leq n}\sum_{k=1}^m a_k\lambda^k_{is}x_s-\sum_{1\leq r<i}\sum_{k=1}^m a_k\lambda^k_{ri}x_r=0, \qquad 1\leq i\leq n.
\end{equation*} 
Consequently, $x=\sum_{i=1}^nx_i\vv_i\in\mathrm{Stab}_{\GG_q}(\theta)/\ZZ(\g_q)$ if and only if $(x_1,\dots,x_n)\in \ker F_\g(a_1,\dots, a_m)$. The last statement implies that 
$\#\mathrm{Stab}_{\GG_q}(\theta)=q^{\dim_{\F_q} (\g_q)-\rank_{_{\F_q}}(F_\g(a_1,\dots, a_m))}$. 
Equality~\eqref{dim-computation}
now follows from~\eqref{dim-Kirillov}.

\begin{definition}[(Admissible sets of vectors)] 
A set of vectors 
$$\left\{ \ba_\ell\in\F_q^{m+l_2+l_3}:\,\, 1\leq\ell\leq (l_1+l_2)f\right\},$$
is called an {\it admissible set of vectors} if $\{\pr_2(\ba_\ell): 1\leq \ell\leq (l_1+l_2)f\}$ is a basis of the $\F_p$-vector space $\F_q^{l_1+l_2}$.
\end{definition} 

Now let $\tilde{\rho}$ be a faithful representation of $\GG_q$ with the smallest possible  dimension. Note that the dimension of $\Omega_1(\ZZ(\g_q))=\ZZ(\g_q)$ over $\F_p$ is $(l_1+l_2)f$ and  $\ZZ(\GG_q)\cong \ZZ(\g_q)$. Therefore 
\begin{equation*}
\widehat{\Omega}_1(\ZZ(\GG_q))\cong\widehat{\Omega}_1(\ZZ(\g_q))\cong\bigoplus_{\ell=1}^{l_1+l_2}\F_q.
\end{equation*}
 Thus by Lemma~\ref{Meyer} the representation $\tilde{\rho}$ decomposes into $(l_1+l_2)f$ irreducible representations
$$\tilde{\rho}=
\bigoplus_{\ell=1}^{(l_1+l_2)f}\rho_{\ba_\ell},\qquad
\ba_\ell\in\F_q^{m+l_2+l_3},
$$
where the representation $\rho_{\ba_\ell}$ is obtained by $\psi_{\ba_\ell}\in\widehat{\g}_q$.  
Since the restriction of $\psi_{\ba_\ell}$ is the central character of $\rho_{\ba_\ell}$, it follows from Lemma~\ref{Meyer} that 
the set \[
\left\{\psi_{\ba_\ell}\big|_{\widehat{\Omega}_1(\ZZ(\g_q))}: 1\leq \ell\leq (l_1+l_2)f\right\}
\] is a basis of $\widehat{\Omega}_1(\ZZ(\g_q))$ and therefore the set
\[
\left\{ \pr_2(\ba_\ell): 1\leq \ell\leq (l_1+l_2)f \right\}
\] 
is a basis of the $\F_p$-vector space $\F_q^{l_1+l_2}$.
To summarize, we have proven that for each faithful representation $\tilde{\rho}$ with the smallest possible dimension, we can find an admissible set of vectors 
\[
\left\{ \ba_\ell\in\F_q^{m+l_2+l_3}:\,\, 1\leq\ell\leq (l_1+l_2)f\right\}
\]
such that  
\begin{equation}\label{dim-computation2}
\dim(\tilde{\rho})=\sum_{\ell=1}^{(l_1+l_2)f} q^{\frac{\rank_{\F_q}(F_\g(\pr_3(\ba))}{2}}.
\end{equation} 
Conversely, let 
$\left\{\ba_\ell\in\F_q^{m+l_2+l_3}: 1\leq \ell\leq (l_1+l_2)f\right\}
$
be an admissible set of vectors. Then by Lemma~\ref{central-span-faith}, we can construct a faithful representation $\tilde{\rho}$, not necessarily 
of minimal dimension,  such that its dimension is equal to~\eqref{dim-computation2}. In the definition of admissible vectors we considered $\F_q^{l_1+l_2}$ as an $\F_p$-vector space. We now consider $\F_q^{l_1+l_2}$ as an $\F_q$-vector space and define the following notion.

\begin{definition}[(Regular sets of vectors)] 
A set of vectors  \[
\left\{ \ba_\ell\in \F_q^{m+l_2+l_3}:\, 1\leq \ell\leq  l_1+l_2\right\},
\]
is called a  {\it regular set of vectors} if the set $\left\{\pr_2(\ba_\ell): 1\leq \ell\leq (l_1+l_2)\right\}$ is a basis of the $\F_q$-vector space $\F_q^{l_1+l_2}$. 
\end{definition}

We now claim that 
\begin{equation}\label{f.min}
\mf(\GG_q)=\min\left\{\sum_{\ell=1}^{l_1+l_2} fq^{\frac{\rank_{\F_q}(F_\g(\pr_3(\ba_\ell)))}{2}}: \left\{ \ba_\ell\in \F_q^{m+l_2+l_3}\right\}_{\ell=1}^{l_1+l_2} \, \text{ is  a  regular set}\right\}.
\end{equation}

Let $\{\omega_1,\dots,\omega_f\}$ be a basis of $\F_q$ over $\F_p$ and let 
\[
\left\{ \ba_\ell\in \F_q^{m+l_2+l_3}:\, 1\leq \ell\leq l_1+l_2\right\}
\] be a regular set of vectors that minimizes~\eqref{f.min}. Clearly $\{\omega_i\ba_\ell: 1\leq i\leq f,\, 1\leq \ell\leq l_1+l_2\}$ is an admissible set of vectors and 
$$
\sum_{\ell=1}^{l_1+l_2} fq^{\frac{\rank_{\F_q}(F_\g(\pr_3(\ba_\ell)))}{2}}=\sum_{i=1}^f\sum_{\ell=1}^{l_1+l_2} q^{\frac{\rank_{\F_q}(F_\g(\pr_3(\omega_i\ba_\ell)))}{2}}\geq \mf(\GG_q).
$$
Conversely, let $\tilde{\rho}$ be a faithful representation with the smallest possible dimension.  From the above discussion 
we obtain 
an admissible set of vectors 
\[
\left\{ \ba_\ell\in\F_q^{m+l_2+l_3}:\,\, 1\leq\ell\leq (l_1+l_2)f\right\}.
\] From Lemma~\ref{blocks} this set can be partitioned into $f$ sets $\mathcal{B}_i$ in which each $\mathcal{B}_i$ is a regular set of vectors. Without loss of generality assume that 
$$\sum_{\ba_\ell\in\mathcal{B}_1} q^{\frac{\rank_{\F_q}(F_\g(\pr_3(\ba_\ell)))}{2}}\leq \sum_{\ba_\ell\in\mathcal{B}_i} q^{\frac{\rank_{\F_q}(F_\g(\pr_3(\ba_\ell)))}{2}} \qquad 2\leq i\leq f.$$
Thus
\begin{equation*}
\sum_{\ba_\ell\in\mathcal{B}_1} fq^{\frac{\rank_{\F_q}(F_\g(\pr_3(\ba_\ell)))}{2}}\leq \dim(\tilde{\rho}),
\end{equation*}
which proves the claim. 

We are now ready to finish the proof of Theorem~\ref{Algorithm}. Let $\{ \ba_\ell\in \F_q^{m+l_2+l_3}:\, 1\leq \ell\leq l_1+l_2\}$ be a regular set of vectors. 
Let $A\in\mathrm{GL}_{l_1+l_2}(\F_q)$ be the matrix whose rows are 
$$\pr_2(\ba_1),\ldots,\pr_2(\ba_{l_1+l_2}).$$
 Therefore 
\begin{equation*}
A=
\begin{footnotesize}
\begin{pmatrix}
a_{1 1} & \dots & a_{1 l_1} & b_{1 1}& \dots & b_{1 l_2}\\
\vdots & \ddots & \vdots & \vdots & \ddots & \vdots\\
a_{l_1 1} & \dots & a_{l_1 l_1} & b_{l_1 1}& \dots & b_{l_1 l_2}\\[.3cm]
a_{(l_1+1) 1} & \dots & a_{(l_1+1) l_1} & b_{(l_1+1) 1}& \dots & b_{(l_1+1) l_2}\\
\vdots & \ddots & \vdots & \vdots & \ddots & \vdots\\
a_{(l_1+l_2) 1}& \dots & a_{(l_1+l_2) l_1} & b_{(l_1+l_2) 1}& \dots & b_{(l_1+l_2) l_2}
\end{pmatrix}
\end{footnotesize}
,
\end{equation*} 
where $\pr_2(\ba_i)=(a_{i1},\ldots,a_{il_1},b_{i1},\ldots,b_{il_2})$.
The first $l_1$ columns of $A$ are linearly independent  over $\F_q$,  and therefore we can find an invertible $l_1\times l_1$ submatrix of the first $l_1$ columns.
By possibly permuting the rows of $A$ we can assume that this submatrix lies at the intersection of the first $l_1$ rows and $l_1$ columns of $A$. 
It is clear that 
\begin{equation*}
\sum_{\ell=1}^{l_1+l_2} q^{\frac{\rank_{\F_q}(F_\g(a_{\ell 1},\dots, a_{\ell m}))}{2}} \ge \sum_{\ell=1}^{l_1} q^{\frac{\rank_{\F_q}(F_\g(a_{\ell 1},\dots, a_{\ell m}))}{2}}+ l_2.
\end{equation*}
From this and~\eqref{f.min} we conclude that 
\begin{equation}\label{ineq-con-1}
\mf(\GG_q)\geq \min\left\{ \sum_{\ell=1}^{l_1} fq^{\frac{\rank_{\F_q}(F_\g(a_{\ell 1},\dots,a_{\ell m}))}{2}}: \begin{pmatrix}
a_{1 1} & \dots & a_{1 l_1}\\
\vdots  & \ddots & \vdots\\
 a_{l_1 1} & \dots & a_{l_1 l_1}
\end{pmatrix}\in\GL_{l_1}(\F_q)\right\}+fl_2.
\end{equation}
Conversely, let 
\begin{equation*}
\{a_{\ell (l_1+1)},\dots,a_{\ell m}\}_{1\leq \ell\leq l_1}
\end{equation*}
be an arbitrary set of elements of $\F_q$ and let
$$
B:=\begin{pmatrix}
a_{1 1} & \dots & a_{1 l_1}\\
\vdots  & \ddots & \vdots\\
 a_{l_1 1} & \dots & a_{l_1 l_1}
\end{pmatrix}
\in\GL_{l_1}(\F_q)
$$
be an arbitrary invertible matrix. Then the  rows of the matrix 
\begin{equation*}
\begin{pmatrix}
B & 0\\
0&I_{l_1\times l_1}
\end{pmatrix}
\in \GL_{l_1+l_2}(\F_q)
\end{equation*}
are projections (under $\pr_2$) of 
 a regular set of vectors in $\F_q^{m+l_2+l_3}$. Similar to the proof of~\eqref{f.min}, 
using this regular set of vectors we can construct a faithful representation of $\GG_q$ of dimension 
$$
\sum_{\ell=1}^{l_1} fq^{\frac{\rank_{\F_q}(F_\g(a_{\ell 1},\dots, a_{\ell m}))}{2}}+ fl_2.
$$
From this we conclude that 
\begin{equation}\label{ineq-con-2}
\mf(\GG_q)\leq \min\left\{ \sum_{\ell=1}^{l_1} fq^{\frac{\rank_{\F_q}(F_\g(a_{\ell 1},\dots,a_{\ell m}))}{2}}: \begin{pmatrix}
a_{1 1} & \dots & a_{1 l_1}\\
\vdots  & \ddots & \vdots\\
 a_{l_1 1} & \dots & a_{l_1 l_1}
\end{pmatrix}\in\GL_{l_1}(\F_q)\right\}+fl_2.
\end{equation}
Therefore, by combining~\eqref{ineq-con-1} and~\eqref{ineq-con-2} we obtain Theorem~\ref{Algorithm}.
\bigskip

\noindent We now address Example~\ref{cubic-curve-example}, Example~\ref{Binary quadratic form},  Example~\ref{Binary cubic form}, and
Example~\ref{ex:verticalprime} in detail. 
By a straightforward calculation, one can verify that in all of these examples $C_1=C_2=C_3=1$, hence Theorem
\ref{Algorithm} is applicable for $p>2$.

\subsection{Details for Example~\ref{cubic-curve-example}}
\label{cubic-curve-example-sol}
From the defining bracket relations we deduce that $\g_a'=\ZZ(\g_a)=\Span_\Z\{v_7,v_8,v_9\}$ and so $\g_a$ is a 2-step nilpotent Lie algebra. From the relations we also obtain the following commutator matrix:
\begin{equation*}
F_\g(T_1,T_2,T_3)=\begin{pmatrix}
0 & M\\
-M^{\mathrm{tr}} & 0
\end{pmatrix}, \qquad M:=M(T_1,T_2,T_3)=\begin{pmatrix}
T_1 & T_2 & aT_3\\
T_3 & T_1 & T_2\\
T_3 & 0 & T_1
\end{pmatrix}.
\end{equation*}
Observe that the determinant of $M$ is 
\begin{equation*}
g(T_1,T_2,T_3):=T_3T_2^2+T_1^3-T_1T_2T_3-aT_1T_3^2.
\end{equation*}
By Theorem~\ref{Algorithm} the faithful dimension of $\GG_{a,p}=\exp(\g_a\otimes_\Z\F_p)$, for $p\geq 3$, is the minimum value of
\begin{equation}\label{BI}
p^{\rank_{\F_p}(M(x_{11},x_{12},x_{13}))}+p^{\rank_{\F_p}(M(x_{21},x_{22},x_{23}))}+p^{\rank_{\F_p}(M(x_{31},x_{32},x_{33}))}
\end{equation}
subject to the  condition
\begin{equation}
\label{condDeT}\begin{pmatrix}
x_{11} & x_{12} & x_{13}\\
x_{21}  & x_{22} & x_{23}\\
x_{31}  & x_{32}  & x_{33}
\end{pmatrix}\in \GL_3(\F_p).
\end{equation}
Let $p$ be a prime not dividing $a$. Computing all $2\times 2$ minors of $M$ shows that 
\[
2\leq \rank_{\F_p}(M(x,y,z))\leq 3
\] unless $x=y=z=0$. Let us consider the question of existence of a vector $(x,y,z)\in \F_p^3$ such that $x\neq 0$ and 
\begin{equation}\label{a}
g(x,y,z)=y^2z+x^3-xyz-axz^2=0.
\end{equation}
Obviously we should take $z\neq 0$
Picking $x=1$, we obtain the equation 
$zy^2-zy+(1-az^2)=0$,
whose discriminant with respect to $y$ is equal to  $4az^3+z^2-4z$. Thus to solve \eqref{a} it suffices to show that for any non-zero $a\in \Z$ the curve 
$
Y^2=4aX^3+X^2-4X
$
has a rational point in $\F_p$ with $X\neq 0$. This can be done by noticing that $Y^2-4aX^3-X^2+4X$ is absolutely irreducible~\cite[Corollary, p.13]{Schmidt} and thus by Hasse's bound on the number of $\F_p$-points of elliptic curves
~\cite[Theorem 2A, p.10]{Schmidt} one can verify that  such a point exists for $p>1800$.
Let $(1,y,z)$ be a solution of~\eqref{a}. 
Then the vectors $(0,1,0)$ and $(0,0,1)$ and $(1,y,z)$ satisfy~\eqref{condDeT}, and minimize~\eqref{BI}. Thus the faithful dimension of $\GG_{a,p}$ is equal to $3p^2$.

\subsection{Details for Example~\ref{Binary quadratic form} and Example~\ref{ex:verticalprime}} 
\label{Binary quadratic form-sol}

We discuss these examples together by proving the following formula
\begin{equation*}
\mf(\GG_q)=
\begin{cases}
2fq &\text{if }\, p\equiv 1\pmod{4};\\
2fq &\text{if }\, p\equiv 3\pmod{4} \, \text{ and $f$ is even};\\
2fq^2 &\text{if }\, p\equiv 3\pmod{4} \, \text{ and $f$ is odd}.
\end{cases}
\end{equation*}

The commutator relations imply that $\g'=\ZZ(\g)=\Span_\Z\{v_5,v_6\}$, and that $\g$ is a 2-step nilpotent Lie algebra.  The commutator matrix of $\g$ can be easily seen to be
\begin{equation*}
F_\g(T_1,T_2)=
\begin{pmatrix}
0 & T_1 & 0 & T_2\\
-T_1 & 0 & T_2 & 0\\
0 & -T_2 & 0 & T_1\\
-T_2 & 0 & -T_1 & 0
\end{pmatrix}.
\end{equation*}
Note that $\det F_\g(T_1,T_2)=(T_1^2+T_2^2)^2$.  
By Theorem~\ref{Algorithm} the faithful dimension of $\GG_q$ is given by
$$
\min\left\{ fq^{\frac{\rank_{\F_q}(F_\g(x_{11},x_{12}))}{2}}+fq^{\frac{\rank_{\F_q}(F_\g(x_{21},x_{22}))}{2}}: \begin{pmatrix}
x_{11} & x_{12}\\
x_{21}  & x_{22}
\end{pmatrix}\in\GL_2(\F_q)\right\}.
$$ 
For $p\equiv 1 \pmod{4}$, let $\alpha$ denote a square root of $-1$ in $\F_q$.  Then the trivial lower bound $2fq$ can be realized by the choice of vectors $(\alpha,1)$ and $(-\alpha,1)$. 
We now consider the  case $p\equiv 3 \pmod{4}$. For these primes, observe that $-1$ is a square in $\F_q$ if and only if $f$ is even. By  the above argument the faithful dimension of $\GG_q$ is $2fq$ when $f$ is even. 
Now suppose that $f$ is odd. 
Then $-1$ is not a square in $\F_q$, and therefore $\det F_\g(a_1,a_2) \neq 0$ for all non-zero vectors $(a_1, a_2) \in \F_q^2$.
This implies that 
$$\rank_{\F_q}(F_\g(a_1,a_2))=4\,\text{ for all } 0\neq (a_1,a_2)\in \F_q^2.$$
Therefore the faithful dimension of $\GG_q$ is at least $2fq^2$, which can be realized by the standard basis. 

\subsection{Details for Example~\ref{Binary cubic form}} 
\label{Binary cubic form-sol}
The commutator relations imply that $\g'=\ZZ(\g)=\Span_\Z\{v_7,v_8\}$, implying that $\g$ is a 2-step nilpotent Lie algebra with the commutator matrix given by 
\begin{equation}
\label{eq:MMtt}
F_\g(T_1,T_2)=\begin{pmatrix}
0 & M\\
-M^{\mathrm{tr}} & 0
\end{pmatrix},\ \text{ where } M:=M(T_1,T_2)=
\begin{pmatrix}
T_1 & T_2 & 0\\
0 & T_1 & T_2\\
-T_2 & T_2 & T_1
\end{pmatrix}.
\end{equation}
By Theorem~\ref{Algorithm} the faithful dimension of $\GG_p$ is given by 
\begin{equation*}
\min \left\{  p^{\rank_{\F_p}(M(x_{11},x_{12}))}+p^{\rank_{\F_p}(M(x_{21},x_{22}))}
: \begin{pmatrix}
x_{11} & x_{12}\\
x_{21}  & x_{22}
\end{pmatrix}\in\GL_2(\F_p) \right\}. 
\end{equation*}

A simple inspection of $2\times 2$ minors of $M$ shows that for a non-zero vector $(T_1, T_2)$, the matrix $M(T_1,T_2)$ 
has rank at least $2$, implying $\mf(\GG_p) \ge 2p^2$. Note that $\det M= T_1^3-T_1T_2^2-T_2^3$ is the homogenization of the polynomial $T_1^3-T_1-1$. This leads us to consider the number of roots of $f(T)=T^3-T-1$ over $\F_p$.

At this point we will make a digression and consider the more general question of determining the number of roots of a given integer polynomial 
over finite fields. Let $C_f$ be the companion matrix of a given polynomial $f(T) \in \ZZ[T]$, and set $M(T_1,T_2)=T_1 I_{d\times d} -T_2 C_f$, where  $d:=\deg f(T)$. The determinant of $M(T_1,T_2)$  is the homogenization of $f(T)$. This construction leads to a general collection of interesting examples of 2-step nilpotent Lie algebras with commutator matrix as in
\eqref{eq:MMtt}.
We refer the reader to Serre's book~\cite[Section 2.1.2]{SerreNX} or his beautiful paper~\cite{Serre-Jordan} for more details on what follows. 
  
Let $f(T)\in\Z[T]$ be a monic integer polynomial. The discriminant of $f(T)$ is defined to be
$$\mathrm{Disc}_f=\Delta_f^2, \qquad \Delta_f=\prod_{1\leq i<j\leq n}(\alpha_i-\alpha_j),$$
where $\alpha_1,\dots,\alpha_n$ are the roots of $f(T)$ in an algebraic closure of $\Q$. Note that since $f(T)$ is a monic polynomial, the discriminant $\mathrm{Disc}_f$ is in $\Z$. Henceforth, $p$ will denote an odd prime which does not divide $\mathrm{Disc}_f$.  
Denote the reduction of $f(T)$ modulo $p$ by $\bar{f}$. Then the roots of $\bar{f}(T)$ are also simple. Define $\mathcal{O}_f= \mathbb{Z}[\alpha_1 ,\dots,\alpha_n]$ and let $\mathfrak{p}$ be a prime ideal of $\mathcal{O}_f$ such
that $\mathfrak{p}\cap\mathbb{Z} = p\mathbb{Z}$. Such an ideal exists since $\mathcal{O}_f$ is integral over $\mathbb{Z}$. For such a prime we can define a unique element in the Galois group of $f$, which is called the Frobenius automorphism. 

\begin{theorem}[(Dedekind)]\label{Dedekind}
Let $E=\Q(\alpha_1,\dots,\alpha_n)$ be the splitting field of $f(T)$. There exists a unique element $\sigma_\mathfrak{p}\in {\rm Gal}(E/\mathbb{Q})$ such that
$\sigma_\mathfrak{p} (\alpha)\equiv \alpha^p \mod\mathfrak{p}$, 
for all $\alpha\in \mathcal{O}_f$. Moreover if $\bar{f}(T)=f_1(T)\cdots f_g(T)$ with $f_i$ irreducible over $\mathbb{F}_p$ of degree $n_i$, then $\sigma_\mathfrak{p}$,
when viewed as a permutation of the roots of $f$, has the cyclic decomposition $\sigma_1\cdots \sigma_g$ with $\sigma_i$ a cycle of length $n_i$.
\end{theorem}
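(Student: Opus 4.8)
\noindent\emph{Proof proposal.} My plan is to exhibit $\sigma_{\mathfrak p}$ as the unique lift to $G:=\mathrm{Gal}(E/\Q)$ of the Frobenius automorphism of the residue field $k:=\mathcal O_f/\mathfrak p$, and then to read off its cycle type by transporting the action of Frobenius on the roots of $\bar f$ back to the roots of $f$ via reduction modulo $\mathfrak p$. Two preliminary observations form the backbone. First, since $p\nmid\mathrm{Disc}_f$ the polynomial $\bar f$ is separable of degree $n$; as $f=\prod_i(T-\alpha_i)$ already splits in $\mathcal O_f$, reducing modulo $\mathfrak p$ gives $\bar f=\prod_i(T-\bar\alpha_i)$ in $k[T]$, and separability then forces $\bar\alpha_1,\dots,\bar\alpha_n$ to be the $n$ \emph{distinct} roots of $\bar f$ in $k$. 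Hence reduction modulo $\mathfrak p$ is a bijection from the roots of $f$ onto the roots of $\bar f$, and $k=\F_p(\bar\alpha_1,\dots,\bar\alpha_n)$ is precisely the splitting field of $\bar f$ over $\F_p$.

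Next I would set up the decomposition group. Since $G$ acts on $\mathcal O_f$ it permutes the primes above $p$; let $D_{\mathfrak p}\subseteq G$ be the stabiliser of $\mathfrak p$, and consider the reduction homomorphism $D_{\mathfrak p}\to\mathrm{Gal}(k/\F_p)$. It is injective: an element acting trivially on $k$ fixes every $\bar\alpha_i$, hence — the $\bar\alpha_i$ being distinct and the element permuting the $\alpha_i$ — fixes every $\alpha_i$, so it is the identity. For surjectivity I invoke that $p$ is unramified in $E$: each $\Q(\alpha_i)/\Q$ is unramified at $p$ because the discriminant of the minimal polynomial of $\alpha_i$ divides $\mathrm{Disc}_f$, and $E$ is the compositum of these fields. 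Consequently $\mathcal O_f\otimes_\Z\F_p$ is an \'etale $\F_p$-algebra of dimension $[E:\Q]=|G|$, $G$ permutes the primes over $p$ transitively, and comparing orders gives $|D_{\mathfrak p}|=[k:\F_p]=|\mathrm{Gal}(k/\F_p)|$; so the injection is an isomorphism. Defining $\sigma_{\mathfrak p}\in D_{\mathfrak p}$ to be the preimage of the Frobenius $x\mapsto x^p$ of $k/\F_p$ yields $\sigma_{\mathfrak p}(\alpha)\equiv\alpha^p\pmod{\mathfrak p}$ for all $\alpha\in\mathcal O_f$. Uniqueness in $G$ is then automatic: any $\sigma$ with $\sigma(\alpha)\equiv\alpha^p\pmod{\mathfrak p}$ sends $\mathfrak p$ into $\mathfrak p$, hence lies in $D_{\mathfrak p}$, where the isomorphism pins it down.

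For the cycle type, $\sigma_{\mathfrak p}(\alpha_i)=\alpha_j$ implies upon reduction $\bar\alpha_j=\bar\alpha_i^{\,p}=\mathrm{Frob}_p(\bar\alpha_i)$, so under the bijection $\alpha_i\leftrightarrow\bar\alpha_i$ the permutation $\sigma_{\mathfrak p}$ of the roots of $f$ corresponds exactly to the permutation $\mathrm{Frob}_p$ of the roots of $\bar f$, and hence has the same cycle type. If $\bar f=f_1\cdots f_g$ with $f_i$ irreducible over $\F_p$ of degree $n_i$, then a root $\beta$ of $f_i$ has $\F_p(\beta)=\F_{p^{n_i}}$, so its Frobenius orbit $\{\beta,\beta^p,\dots,\beta^{p^{n_i-1}}\}$ has size exactly $n_i$ and exhausts the roots of $f_i$; thus $\mathrm{Frob}_p$, and therefore $\sigma_{\mathfrak p}$, is a product of disjoint cycles of lengths $n_1,\dots,n_g$, as claimed. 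I expect the one genuinely delicate point to be the surjectivity of $D_{\mathfrak p}\to\mathrm{Gal}(k/\F_p)$ — equivalently, developing enough ramification theory to know that $p$ is unramified and that the (possibly non-maximal) order $\mathcal O_f$ is well behaved at $p$; everything else is either a counting argument or formal bookkeeping with reductions modulo $\mathfrak p$. One could alternatively pass to a prime $\mathfrak P$ of $\mathcal O_E$ above $\mathfrak p$ and quote the standard isomorphism $D(\mathfrak P/p)\xrightarrow{\ \sim\ }\mathrm{Gal}\big((\mathcal O_E/\mathfrak P)/\F_p\big)$, but then one still needs $p\nmid\mathrm{disc}(\mathcal O_E)$, again a consequence of $p\nmid\mathrm{Disc}_f$.
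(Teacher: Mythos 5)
The paper offers no proof of Theorem~\ref{Dedekind}: it is quoted verbatim from Jacobson (Theorems 4.37 and 4.38), so there is nothing internal to compare your argument against. Your plan is the standard decomposition-group proof, and its skeleton is correct: since $p\nmid\mathrm{Disc}_f$, reduction modulo $\mathfrak p$ is a bijection from the roots of $f$ to the (distinct) roots of $\bar f$, which gives injectivity of $D_{\mathfrak p}\to\mathrm{Gal}(k/\F_p)$; uniqueness of $\sigma_{\mathfrak p}$ and the cycle-type claim then follow correctly by transporting the Frobenius action along that bijection.

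The one step that does not close as written is exactly the one you flag. The sentence ``Consequently $\mathcal O_f\otimes_\Z\F_p$ is an \'etale $\F_p$-algebra of dimension $[E:\Q]$'' is a non sequitur from ``$p$ is unramified in $E$'': unramifiedness is a statement about the maximal order $\mathcal O_E$, whereas $\mathcal O_f=\Z[\alpha_1,\dots,\alpha_n]$ is in general a proper suborder, and if $p$ divided $[\mathcal O_E:\mathcal O_f]$ then $\mathcal O_f\otimes_\Z\F_p$ would fail to be reduced. Unlike the case of $\Z[\alpha]$ for a single root, there is no off-the-shelf identity relating $\mathrm{disc}(\mathcal O_f)$ to $\mathrm{Disc}_f$, so $p$-maximality of $\mathcal O_f$ genuinely needs an argument. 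Two ways to supply it. (i) Complete at $p$: each factor $E_w$ of $E\otimes_{\Q}\Q_p$ is unramified, each $\alpha_i$ reduces to a simple root of $\bar f$, so Hensel's lemma forces $\alpha_i$ to lie in the unramified extension of $\Q_p$ with residue field $\F_p(\bar\alpha_1,\dots,\bar\alpha_n)$, and Nakayama then gives $\Z_p[\alpha_1,\dots,\alpha_n]=\mathcal O_{E_w}$; this yields the \'etale claim and your orbit-counting $|D_{\mathfrak p}|=[k:\F_p]$ goes through. (ii) Bypass maximality entirely with the classical resolvent argument: choose by the Chinese remainder theorem an element $\theta\in\mathcal O_f$ whose image generates $k$ and which lies in $\sigma^{-1}\mathfrak p$ for every $\sigma\notin D_{\mathfrak p}$; reducing $\prod_{\sigma\in G}(T-\sigma\theta)\in\Z[T]$ modulo $\mathfrak p$ and evaluating at $\theta^p$ produces some $\sigma_0\in D_{\mathfrak p}$ with $\sigma_0\theta\equiv\theta^p\pmod{\mathfrak p}$, i.e.\ an element mapping onto the Frobenius of $k/\F_p$. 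Either repair completes your proposal; the rest of what you wrote is correct.
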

\begin{proof}
See~\cite[Theorems 4.37 and 4.38]{Jacobson}
\end{proof}
For a monic integer polynomial $f(T)\in\Z[T]$ define 
$$N_f(p):= \# \{ a\in\mathbb{F}_p:\, f(a) =0 \}.$$
Theorem~\ref{Dedekind} shows that $N_f(p)$ also counts the number of fixed points of $\sigma_\mathfrak{p}$ permuting the roots of $f$. 

Let $\mathcal{O}_E$ be the ring of integers of $E$ and $\mathfrak{P}$ be
a prime ideal of $\mathcal{O}_E$ such that $\mathfrak{P}\cap \Z =p\Z$, where $p$ does not divide the discriminant of $E$.  Then, as above, one can prove the existence of a unique automorphism $\sigma_{\mathfrak{P}}\in\mathrm{Gal}(E/\Q)$ such that $\sigma_{\mathfrak{P}}(\alpha)\equiv \alpha^p \pmod{\mathfrak{P}}$ for all $\alpha\in\mathcal{O}_E$. Let $\mathfrak{P}\cap \mathcal{O}_f=\mathfrak{p}$. Since the elements of $\mathrm{Gal}(E/\Q)$ are uniquely determined by
their restrictions to $\mathcal{O}_f$, we have $\sigma_{\mathfrak{P}}=\sigma_{\mathfrak{p}}$. The automorphism $\sigma_{\mathfrak{p}}$ is called the {\it Frobenius automorphism} and it describes the splitting behaviour of the prime $p$. 
It is well known that  $p$ splits completely in $E$ if and only if $\sigma_\mathfrak{p}$ is the identity element.  Now let $\mathfrak{P}$ and $\mathfrak{P}'$ be two primes in $\mathcal{O}_E$ lying above the rational prime $p$.  
One can show 
(see~\cite[Section 9]{Neukirch} for more details)
that there exists $\tau\in\mathrm{Gal}(E/\Q)$ such that $\tau\sigma_{\mathfrak{P}}\tau^{-1}=\sigma_{\mathfrak{P}'}$. This implies that the conjugacy class of $\sigma_\mathfrak{P}$ is independent of the choice of $\mathfrak{P}$.  Let us now turn to the question of computing $N_f(p)$ when $f(T)$ is a cubic polynomial. The following proposition relates the Legendre symbol of the discriminant of $f$ to the number of the irreducible factors of $\bar{f}$.  
\begin{proposition}\label{dis/p}
 Let $f(T)\in \mathbb{Z}[T]$ be a monic irreducible polynomial of degree $n$ with the discriminant $D$, and suppose $p$ is an odd prime which does not divide the discriminant of $f$. If $\bar{f}=f_1\cdots f_g$ with $f_i$ irreducible over $\mathbb{F}_p$ then $(\frac{D}{p})=(-1)^{n-g}$, 
where $(\frac{\cdot}{p})$ is the Legendre symbol.
\end{proposition}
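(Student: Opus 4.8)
The plan is to identify both $\left(\frac{D}{p}\right)$ and $(-1)^{n-g}$ with the sign of the Frobenius permutation $\sigma_{\mathfrak p}$ acting on the roots $\alpha_1,\dots,\alpha_n$ of $f$. The starting point is the elementary identity $\sigma(\Delta_f)=\mathrm{sgn}(\sigma)\,\Delta_f$, valid for every $\sigma\in\mathrm{Gal}(E/\Q)$ viewed as a permutation of the roots. This follows at once from $\Delta_f=\prod_{i<j}(\alpha_i-\alpha_j)$: applying $\sigma$ permutes the set of factors up to an overall sign, and the number of factors that change sign is exactly the number of inversions of $\sigma$, which records $\mathrm{sgn}(\sigma)$ modulo $2$.

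Next I would evaluate $\Delta_f^{\,p-1}$ modulo a prime above $p$ in two ways. Since $p$ is odd and $p\nmid D=\mathrm{Disc}_f$, the prime $p$ is unramified in $E$ (it is unramified in $\Q(\alpha_1)$, hence in every conjugate field, hence in their compositum $E$), so $\sigma_{\mathfrak P}$ is well-defined for a prime $\mathfrak P$ of $\mathcal{O}_E$ above $p$; put $\mathfrak p=\mathfrak P\cap\mathcal{O}_f$. As $\Delta_f\in\mathcal{O}_E$ and $\sigma_{\mathfrak P}(\alpha)\equiv\alpha^p\pmod{\mathfrak P}$ for all $\alpha\in\mathcal{O}_E$, the identity above gives $\mathrm{sgn}(\sigma_{\mathfrak p})\,\Delta_f\equiv\Delta_f^{\,p}\pmod{\mathfrak P}$. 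Moreover $\Delta_f\notin\mathfrak P$, since $\Delta_f\in\mathfrak P$ would force $D=\Delta_f^{\,2}\in\mathfrak P\cap\Z=p\Z$; cancelling the unit $\Delta_f$ and using $\Delta_f^{\,2}=D$ together with Euler's criterion yields
\[
\mathrm{sgn}(\sigma_{\mathfrak p})\equiv\Delta_f^{\,p-1}=D^{(p-1)/2}\equiv\left(\tfrac{D}{p}\right)\pmod{\mathfrak P}.
\]
Since both sides lie in $\{\pm1\}$ and $2\notin\mathfrak P$, this congruence is an equality in $\Z$, so $\left(\frac{D}{p}\right)=\mathrm{sgn}(\sigma_{\mathfrak p})$.

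Finally I would compute $\mathrm{sgn}(\sigma_{\mathfrak p})$ combinatorially. By Dedekind's theorem (Theorem~\ref{Dedekind}), the factorization $\bar f=f_1\cdots f_g$ with $\deg f_i=n_i$ shows that $\sigma_{\mathfrak p}$, as a permutation of the $n$ roots, is a product of $g$ disjoint cycles of lengths $n_1,\dots,n_g$, where $n_1+\dots+n_g=n$. An $n_i$-cycle has sign $(-1)^{n_i-1}$, so
\[
\mathrm{sgn}(\sigma_{\mathfrak p})=\prod_{i=1}^{g}(-1)^{n_i-1}=(-1)^{\sum_i(n_i-1)}=(-1)^{n-g},
\]
and combining with the previous paragraph gives $\left(\frac{D}{p}\right)=(-1)^{n-g}$.

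The main obstacles I anticipate are the careful bookkeeping behind the sign identity $\sigma(\Delta_f)=\mathrm{sgn}(\sigma)\,\Delta_f$ and the verification that $\Delta_f$ is prime to $\mathfrak P$; everything else is routine. Note that irreducibility of $f$ is not really needed here — only that $\mathrm{Disc}_f\neq0$ — while the hypothesis $p\nmid\mathrm{Disc}_f$ is used twice: to invoke Dedekind's theorem and to keep $\Delta_f$ out of $\mathfrak P$.
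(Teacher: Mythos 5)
Your proof is correct, and its skeleton is the same as the paper's: both arguments reduce the claim to the identity $\left(\tfrac{D}{p}\right)=\mathrm{sgn}(\sigma_{\mathfrak p})$ and then compute $\mathrm{sgn}(\sigma_{\mathfrak p})=(-1)^{n-g}$ from the cycle decomposition supplied by Dedekind's theorem (Theorem~\ref{Dedekind}), using in both cases the sign identity $\sigma(\Delta_f)=\mathrm{sgn}(\sigma)\Delta_f$. The one genuine difference is how the first identity is established. The paper restricts $\sigma_{\mathfrak p}$ to the quadratic subfield $K=\Q(\sqrt{D})\subseteq E$ and invokes the splitting criterion for quadratic fields: $\sigma_{\mathfrak p}$ is even iff it fixes $\Delta_f=\sqrt{D}$ iff the Frobenius of $K/\Q$ at $p$ is trivial iff $\left(\tfrac{D}{p}\right)=1$. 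You instead inline this step as a direct congruence: from $\sigma_{\mathfrak P}(\Delta_f)\equiv\Delta_f^{p}\pmod{\mathfrak P}$ you cancel the unit $\Delta_f$ (correctly justified, since $\Delta_f\in\mathfrak P$ would force $p\mid D$) and apply Euler's criterion to $\Delta_f^{p-1}=D^{(p-1)/2}$. Your route is slightly more self-contained, since it avoids appealing to the decomposition law in $\Q(\sqrt{D})$, at the cost of the small bookkeeping you flag; the paper's route is shorter given that standard fact. Your closing observations --- that only $\mathrm{Disc}_f\neq 0$ (not irreducibility) is needed, and that $p\nmid\mathrm{Disc}_f$ enters both in applying Dedekind and in keeping $\Delta_f$ out of $\mathfrak P$ --- are accurate.
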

\begin{proof}
Continuing to use the same notation as before, we denote the splitting field of $f$ by $E$ and its ring of integers by $\mathcal{O}_E$. Set $D=\mathrm{Disc}_f$ and set $K=\mathbb{Q}(\sqrt{D})$ which is a subfield of $E$. Let $\mathfrak{p}$ be a prime in $\mathcal{O}_E$ lying over $p$ and write  $\wp=\mathfrak{p}\cap K$. Then $\sigma_\wp:=\sigma_{\mathfrak{p}|_K}$ is the Frobenius automorphism assigned to $\wp$ in $K/\mathbb{Q}$. Suppose $\sigma_\mathfrak{p}$ is an even permutation. Then $\sigma_\mathfrak{p}(\Delta_f)=\Delta_f$ and so $\sigma_\wp$ is trivial over $K$, which implies that $(\frac{D}{p})=1$. If, on the other hand, $\sigma_\mathfrak{p}$ is an odd permutation, then $\sigma_\mathfrak{p}(\Delta_f)=-\Delta_f$, and $\sigma_\wp$ is not-trivial, implying that $(\frac{D}{p})=-1$. We have thus shown that  ${\rm sgn}(\sigma_\mathfrak{p})=(\frac{D}{p})$. 
Let $n_i= \deg f_i$. Viewing $\sigma_\mathfrak{p}$ as a permutation of the roots of $f$, from Theorem~\ref{Dedekind},  we obtain
$${\rm sgn}(\sigma_\mathfrak{p})=(-1)^{\sum_{i=1}^{g} (n_i-1)}=(-1)^{n-g},$$
since $\sum_{i=1}^g n_i=n$. This finishes the proof.
\end{proof}
 As an application we obtain
\begin{corollary}
Let $f(T)\in \mathbb{Z}[T]$ be an irreducible monic cubic polynomial with discriminant $D$, and suppose $p$ is an odd prime which does not divide $D$. Then
$$N_f(p)=
\begin{cases}
0 {\,\, {\rm or}\,\,} 3  & \mbox{if } (\frac{D}{p})=1; \\
1 & \mbox{if } (\frac{D}{p})=-1.
\end{cases}
$$
\end{corollary}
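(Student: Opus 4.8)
The plan is to combine Dedekind's theorem (Theorem~\ref{Dedekind}) with Proposition~\ref{dis/p}, reducing the statement to an enumeration of the possible factorization types of $\bar f$ over $\F_p$. Since $f$ is a monic cubic and $p$ does not divide $D=\mathrm{Disc}_f$, the reduction $\bar f$ is a separable cubic, so it factors over $\F_p$ as a product of \emph{distinct} irreducibles $f_1\cdots f_g$ with $\deg f_1+\cdots+\deg f_g=3$. There are exactly three possibilities: $\bar f$ is irreducible ($g=1$); $\bar f$ is a linear factor times an irreducible quadratic ($g=2$); or $\bar f$ is a product of three distinct linear factors ($g=3$). By Theorem~\ref{Dedekind}, $N_f(p)$ equals the number of linear factors among the $f_i$, so these three cases give $N_f(p)=0$, $1$, $3$ respectively. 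In particular $N_f(p)\in\{0,1,3\}$, never $2$: a separable cubic with two roots in $\F_p$ automatically has its third root there as well, since the sum of the roots lies in $\F_p$.

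Next I would invoke Proposition~\ref{dis/p} with $n=3$, which gives $\left(\frac{D}{p}\right)=(-1)^{3-g}$. Matching this with the three cases above: $g=1$ gives $\left(\frac{D}{p}\right)=1$ and $N_f(p)=0$; $g=2$ gives $\left(\frac{D}{p}\right)=-1$ and $N_f(p)=1$; $g=3$ gives $\left(\frac{D}{p}\right)=1$ and $N_f(p)=3$. Reading this table by the value of the Legendre symbol: if $\left(\frac{D}{p}\right)=1$ then $g\in\{1,3\}$ and hence $N_f(p)\in\{0,3\}$, while if $\left(\frac{D}{p}\right)=-1$ then $g=2$ and hence $N_f(p)=1$. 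This is precisely the assertion of the corollary.

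There is essentially no serious obstacle here; the statement is a bookkeeping consequence of the two results just quoted. The only point that needs a moment's care is the observation that $N_f(p)=2$ cannot occur, which is why the hypothesis $p\nmid D$ (equivalently, separability of $\bar f$) is effectively used twice: once so that Dedekind's theorem applies in the clean form stated above, and once, implicitly, to exclude the degenerate factorization patterns that would otherwise muddy the case analysis.
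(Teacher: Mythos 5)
Your argument is correct and is precisely the one the paper leaves implicit when it states the corollary "as an application" of Proposition~\ref{dis/p} and Theorem~\ref{Dedekind}: enumerate the separable factorization types of $\bar f$, read off $N_f(p)$ from the number of linear factors, and match against $\left(\tfrac{D}{p}\right)=(-1)^{3-g}$. Your side remark that $N_f(p)=2$ is impossible for a separable cubic is a correct and worthwhile clarification, though not strictly needed once the three factorization types are listed.
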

We now turn to the special case $f(T)=T^3-T-1$. The discriminant of $f$ is $-23$ and then by the quadratic reciprocity we deduce that for $p\neq 23$
$$N_f(p)=
\begin{cases}
0 {\,\, {\rm or}\,\,} 3  & \mbox{if } (\frac{p}{23})=1; \\
1 & \mbox{if } (\frac{p}{23})=-1.
\end{cases}
$$
When $(\frac{p}{23})=1$, we will need the reduction theory of binary quadratic forms to determine $N_f(p)$. We refer the reader to~\cite[Chapter 2, $\S 8$]{Flath} for more details. Let $\Delta<0$ be an integer and assume that $\Delta\equiv 0, 1 \pmod{4}$. The modular group $\mathrm{SL}_2(\Z)$ acts on 
$$
\Sigma_\Delta:=\left\{g(x,y)=ax^2+bxy+cy^2: a,b,c\in \Z, \, a>0,\, \gcd(a,b,c)=1,\, b^2-4ac=\Delta\right\},
$$  
by linear change of variables.
By the reduction theory of positive definite integral binary quadratic forms (for example see \cite[Theorem 3.9]{Cox}), the number $h(\Delta)$ of $\mathrm{SL}_2(\Z)$-orbits is finite. This number is called the {\it class number} of $\Delta$. In the case at hand, we have $h(-23)=3$, and $\mathrm{SL}_2(\Z)$-orbits of $\Sigma_{-23}$ are represented by the forms $x^2+xy+6y^2$ and $2x^2\pm xy+3y^2$. Note that $2x^2+xy+3y^2$ and $2x^2-xy+3y^2$ are $\mathrm{GL}_2(\Z)$-equivalent and thus represent the same set of integers. It is easy to show that 
$(\frac{p}{23})=1$,
 if and only if $p$ is represented by exactly one of the form $x^2+xy+6y^2$ or $2x^2+xy+3y^2$ (see~\cite[Proposition 10.2]{Flath}).    
Let $L$ be the cubic extension of $\Q$ obtained by adding a root of $f(T)$ and set $K=\mathbb{Q}(\sqrt{-23})$:
\begin{center}
\begin{footnotesize}
\begin{tikzpicture}[auto,node distance=1.5cm]
\node (B) at (0,1) {$E$};
\node (C) at (-1,0) {$K$};
\node (D) at (1,0) {$L$};
\node (E) at (0,-1) {$\mathbb{Q}$};
\draw[semithick] (B) --node[above left]{$3$} (C);
\draw [semithick] (B) --node[above right]{$2$} (D);
\draw [semithick] (C) --node[below left]{$2$} (E);
\draw [semithick] (D) --node[below right]{$3$} (E);
\end{tikzpicture}
\end{footnotesize}
\end{center}
Note that $\mathrm{Gal}(E/\mathbb{Q})=S_3$. For $p\neq 23$, set $p\mathcal{O}_K=\mathfrak{p}\overline{\mathfrak{p}}$. Since the class number of $K$ is $3$ and $E$ is unramified over $K$,  $E$ is the Hilbert class field of $K$, i.e. the maximal unramified abelian extension of $K$. From this we can conclude that $\mathfrak{p}$ splits completely in $E$ if and only if $\mathfrak{p}$ is a principal ideal~\cite[Corollary 5.25]{Cox}. Moreover, note that the ring of integers of the quadratic extension $K$ is $\Z[(1+\sqrt{-23})/2]$ and so $\mathfrak{p}$ is a principal ideal if and only if $p=x^2+xy+6y^2$.
Putting all these together, we conclude that $p=x^2+xy+6y^2$ if and only if $p$ splits completely in $E$. This means that $T^3-T-1$ has $3$ roots in $\F_p$ if and only if $p=x^2+xy+6y^2$. This also shows that $p=2x^2+xy+3y^2$ if and only if $T^3-T-1$ has no root in $\F_p$. 
Consequently,
\begin{equation}\label{Np}
N_f(p)=
\begin{cases}
1 &\text{if }\, \left(\frac{p}{23}\right)=-1;\\
0 &\text{if }\, p\,\, \text{is of the form\,}  2x^2+xy+3y^2; \\
3 &\text{if }\, p\,\, \text{is of the form\,} x^2+xy+6y^2.
\end{cases}
\end{equation}  
Let $N_{\bf X}(p)$ denote the number of rational points of the projective variety  ${\bf X}:=T_1^3-T_1T_2^2-T_2^3=0$ in $\mathbf{P}^1(\F_p)$. Then from~\eqref{Np}, for all $p\neq 23$ we obtain 
\begin{equation*}
N_{\bf X}(p)=
\begin{cases}
1 &\text{if }\, \left(\frac{p}{23}\right)=-1;\\
0 &\text{if }\, p\,\, \text{is of the form\,}  2x^2+xy+3y^2; \\
3 &\text{if }\, p\,\, \text{is of the form\,} x^2+xy+6y^2.
\end{cases}
\end{equation*} 
When $(\frac{p}{23})=-1$, we have $N_{\bf X}(p)=1$ and hence the faithful dimension of $\GG_p$ equals $p^2+p^3$. When $p$ is of the form  $2x^2+xy+3y^2$ then $N_{\bf X}(p)=0$ and so the minimum is exactly $2p^3$. This implies that the faithful dimension is $2p^3$. In the remaining case, we can find distinct points $(x_{11},x_{12})$ and $(x_{21},x_{22})$ in ${\bf X}$. Since $M(x_{11},x_{12})$ and $ M(x_{21},x_{22})$ have both rank $2$, 
it follows that in this case the faithful dimension is $2p^2$. Moreover $T_1^3-T_1-1$ has a double root and a simple root in $\F_{23}$. Thus the same argument shows that in this case the faithful dimension is $2(23)^2$.

\subsection{Details for Example~\ref{LeeLie}}
\label{LeeLie-sol}
 The commutator relations imply that $\g'=\ZZ(\g)=\Span_\Z
\{v_6,v_7,v_8\}$, implying that $\g$ is a 2-step nilpotent Lie algebra with the commutator matrix given by 
\begin{equation*}
F_\g(T_1,T_2,T_3)=
\begin{pmatrix}
0 & M\\
-M^{\mathrm{tr}} & 0
\end{pmatrix}, \qquad M:=M(T_1,T_2,T_3)=
\begin{pmatrix}
T_1 & T_2\\
T_3 & T_1\\
2T_2 & T_3
\end{pmatrix}.
\end{equation*}
For a given odd prime $p$, and a nonzero vector $(T_1,T_2,T_3)\in\F_p^3$, the rank over $\F_p$ of $M$ is equal to $1$ if and only if 
$(T_1,T_2,T_3)$ is proportional to 
$(\lambda^2/2,\lambda/2,1)$
such that $\lambda^3-2=0$, and is equal to $2$ otherwise. 
Set $f(\lambda)=\lambda^3-2$.
As noted in 
\cite[Corollary 2.3]{SLee}, 
\begin{equation*}
N_f(p)=
\begin{cases}
1 &\text{if }\, p\equiv 2 \pmod{3}\text{ or }p=3;\\
3 &\text{if }\, p\equiv 1 \pmod{3}\text{ and }p\text{ is represented by the form }x^2+27y^2;\\
0 &\text{if }\, p\equiv 1 \pmod{3}\text{ and }p\text{ is not represented by the form }x^2+27y^2.
\end{cases}
\end{equation*}  
The rest of the argument is similar to Example~\ref{Binary cubic form}.

\section{Proofs of Theorem \ref{rationality-Ax} and Theorem \ref{APsets}}\label{Ax-Section}
Before we begin the proofs of  Theorem~\ref{rationality-Ax}
and
Theorem~\ref{APsets},
let us recall the setting. As before, let $\g$ be a nilpotent $\Z$-Lie algebra of nilpotency class $c$
which is finitely generated as an abelian group, and set $q:=p^f$ for some $f\geq 1$.
 Let $F_\g(T_1,\dots,T_m)$ denote the matrix of linear forms that is defined in~\eqref{commutating-matrix}. Since $F_\g(T_1,\dots,T_m)$ is a skew-symmetric $n\times n$ matrix it follows that for all $x_1,\dots,x_m\in\F_q$, the rank of $F_\g(x_1,\dots,x_m)$ is an even number no larger than $n$. Let $M$ be the set of all integer vectors $\mu=(a_1,\dots,a_{l_1})\in\Z^{l_1}$, with $0\leq a_i\leq n/2$, and assign to each $\mu \in M$ the polynomial of degree at most $n/2$ given by
$$
g_\mu(T)=T^{a_1}+\dots+T^{a_{l_1}}+l_2.
$$
Since $g_{\mu}$ is symmetric in $a_1, \dots, a_{l_1}$, we will only consider those integer vectors $\mu$ with \[a_1\leq \dots\leq a_{l_1},\] and order them with the {\it reverse lexicographical order}, i.e. $\mu\lhd \mu'$ if the rightmost non-zero component of the vector $\mu'-\mu$ is positive.  If $\mu\lhd\mu'$ and $q>l_1$, then we can easily see that
\begin{equation}\label{ine-g}
g_\mu(q)<g_{\mu'}(q).
\end{equation}
Since $r= \# M< \infty$, we can sort its elements as $\mu_1\lhd\dots\lhd\mu_r$.
For a given vector $\mu$, define the following affine variety associated to $\mu=(a_1,\dots,a_{l_1})$:
\begin{equation*}
{\bf X}_\mu:=\left\{(x_{ij})\in \mathrm{M}_{l_1,m}(\C):  \rank_\C(F_\g(x_{i1},\dots, x_{im})) =2 a_i,\quad \det\begin{pmatrix}
x_{1 1} & \dots & x_{1 l_1}\\
\vdots  & \ddots & \vdots\\
x_{l_1 1} & \dots & x_{l_1 l_1} 
\end{pmatrix}\neq 0\right\}.
\end{equation*}
Note that the non-vanishing condition on the determinant can be turned into an equation by introducing a new variable, standing for the inverse of the determinant. We also remark that ${\bf X}_\mu$ is defined over $\Z$ because $F_\g(T_1,\dots,T_m)$ is an integer matrix.

\subsection{Proof of Theorem \ref{rationality-Ax}}
\label{subsec:pfrationality-Ax}

For $\mu\in M$ set
$$
\Sigma_\mu:=\left\{p>\max\{l_1,c,C_1,C_2,C_3\}: {\bf X}_\mu(\F_p)\neq \emptyset\right\},
$$
where the $C_i$'s are as in 
Theorem \ref{Algorithm}.
For every  integer $k$ such that  $1\leq k\leq r$, Theorem~\ref{Algorithm} and~\eqref{ine-g} imply that
$\mf(\GG_p)=g_{\mu_k}(p)$
whenever 
$$p\in\mathscr{P}_k:=\Sigma_{\mu_k}\setminus \bigcup_{1\leq i<k}\Sigma_{\mu_i}.$$
 Since finite sets are Frobenius, the assertion of 
Theorem \ref{rationality-Ax} now follows from the following theorem due to Ax~\cite[Theorem 1]{Ax}.
\begin{theorem}[(Ax)]
With the above notation, $\Sigma_\mu$ is a Frobenius set. 
\end{theorem}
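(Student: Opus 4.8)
The plan is to express the non-emptiness of ${\bf X}_\mu(\F_p)$ as the truth in $\F_p$ of a single first-order sentence in the language of rings, and then to appeal to a general theorem of Ax.

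First I would write down that sentence. By definition ${\bf X}_\mu$ is the locally closed subvariety of the affine space $\mathrm{M}_{l_1,m}$ of matrices $(x_{ij})$ cut out by the following conditions: for each row index $i$ with $1\le i\le l_1$, all $(2a_i+1)\times(2a_i+1)$ minors of $F_\g(x_{i1},\dots,x_{im})$ vanish and at least one $(2a_i)\times(2a_i)$ minor of the same matrix is non-zero --- this pair of requirements says exactly that the matrix has rank $2a_i$, with the evident degenerations when $2a_i=0$ or $2a_i=n$ --- and, in addition, $\det\big[(x_{ij})_{1\le i,j\le l_1}\big]\neq 0$. Since the entries of $F_\g(T_1,\dots,T_m)$ are $\Z$-linear forms in the $T_k$, every one of these minors, and likewise the displayed determinant, is a polynomial with integer coefficients in the $x_{ij}$; hence all of the equations and inequations above are formulas over the pure ring language $\{+,\cdot,0,1\}$. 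Writing $\psi_i(x_{i1},\dots,x_{im})$ for the quantifier-free formula ``$F_\g(x_{i1},\dots,x_{im})$ has rank $2a_i$'' (a conjunction of polynomial equations together with the negation of a conjunction of polynomial equations), the sentence
\[
\sigma_\mu\ :=\ \exists (x_{ij})_{1\le i\le l_1,\,1\le j\le m}\ \Big(\,\bigwedge_{i=1}^{l_1}\psi_i(x_{i1},\dots,x_{im})\ \wedge\ \det\big[(x_{ij})_{1\le i,j\le l_1}\big]\neq 0\,\Big)
\]
satisfies, for every prime $p$, the equivalence $\F_p\models\sigma_\mu\iff{\bf X}_\mu(\F_p)\neq\emptyset$. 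The only subtlety here is that the rank over $\F_p$ of a matrix over $\F_p$ is controlled exactly by the vanishing and non-vanishing of its minors over $\F_p$, so that reduction modulo $p$ of the defining data of ${\bf X}_\mu$ really does cut out the $\F_p$-points in the manner described by $\sigma_\mu$.

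Next I would invoke Ax's theorem \cite[Theorem~1]{Ax} (see also van den Dries \cite{vandenDries}): for every sentence $\sigma$ in the language of rings, the set $\{p:\F_p\models\sigma\}$ is, up to a finite set of primes, a Frobenius set. Applied to $\sigma_\mu$, this shows that $\{p:{\bf X}_\mu(\F_p)\neq\emptyset\}$ is a Frobenius set. Finally, finite sets of primes are Frobenius --- for instance $\{p_0\}=\V_{p_0 T-1}^{c}$, since $p_0T\equiv1\pmod p$ is solvable precisely when $p\neq p_0$ --- and the Frobenius sets form a Boolean algebra, so intersecting with the cofinite condition $p>\max\{l_1,c,C_1,C_2,C_3\}$ keeps us inside that Boolean algebra. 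Hence $\Sigma_\mu$ is a Frobenius set. The entire weight of the statement rests on Ax's theorem, which we use as a black box; in the reduction above the only step needing care is the translation of the rank conditions into a quantifier-free formula and the verification that this translation commutes with reduction modulo $p$, and both are routine.
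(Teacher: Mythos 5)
Your proposal is correct and follows the same route as the paper, which simply quotes Ax's theorem \cite[Theorem 1]{Ax} as a black box: you spell out the (routine but worth recording) translation of the rank and invertibility conditions defining ${\bf X}_\mu$ into a first-order existential sentence over the ring language with integer coefficients, note that finite sets such as the excluded primes $p\le\max\{l_1,c,C_1,C_2,C_3\}$ are themselves Frobenius, and conclude by the Boolean-algebra closure of Frobenius sets. Nothing is missing; the only implicit point is that the inequations in your formula can, if one insists on Ax's original formulation for systems of equations, be converted to equations by adjoining inverses, exactly as the paper remarks for the determinant condition.
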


\begin{remark}
An analogue of Theorem \ref{rationality-Ax} holds for $\mf(\GG_{p^f})$ when
$f$ is fixed and $p$ varies. This statement can be established by a modification of the proof given above and applying \cite[Sec. 7.2.4, Ex.~2]{SerreNX}.
\end{remark}

\subsection{Proof of Theorem \ref{APsets}}
The proof of this theorem is similar to that of Theorem \ref{rationality-Ax}. Hence we will maintain the notation for the matrix $F_\g(T_1,\dots,T_m)$, the ordered set $M$, the polynomial $ g_\mu$, and the variety ${\bf X}_\mu$
as above. 
Now assume that $p>C$, where 
\begin{equation}
\label{eq:bigconstantnt}
C:=\max\{l_1,c,C_1,C_2,C_3\}.
\end{equation}
Consider the sets
$$
\Sigma'_\mu:=\left\{f \ge 1: {\bf X}_\mu(\F_{p^f})\neq \emptyset\right\}.
$$
It follows from a theorem of Dwork \cite[p.~6 and Section 4.3]{SerreNX} that there exists a function $\nu: \C \to \Z$ with finite support such that 
\begin{equation}\label{cardinality-dwork}
N_{ {\bf X}_{\mu}}(p^f):= \#  {\bf X}_\mu(\F_{p^f})= \sum_{z \in \C} \nu(z) z^f.  
\end{equation}
It is easy to see that the sequence $c_f=N_{ {\bf X}_{\mu}}(p^f)$  satisfies a linear recurrence relation of the form $ c_n= \sum_{k=1}^{r} a_k c_{n-k}$. We will now invoke the following theorem of Skolem-Mahler-Lech:

\begin{theorem}[(Skolem-Mahler-Lech), \cite{Poorten}]\label{SML}
Let $\{ u_n \}_{ n \ge 1}$ be a sequence of complex numbers satisfying a linear recurrence equation. Then its zero set
$\{ n: u_n = 0 \}$ is a union of a finite set and a finite number of 
sets of the form $n\equiv a\pmod b$ for integers $a,b$. 
\end{theorem}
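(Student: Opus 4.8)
The plan is to establish the Skolem--Mahler--Lech theorem by Skolem's $p$-adic method, reducing the description of the zero set --- one residue class at a time --- to a statement about the zeros of a convergent $p$-adic power series, where Strassmann's theorem applies.

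First I would put the recurrence in closed form. If $u_n = a_1 u_{n-1} + \dots + a_r u_{n-r}$, the characteristic polynomial $T^r - a_1 T^{r-1} - \dots - a_r$ has roots $\alpha_1, \dots, \alpha_s$, and one writes
\[
u_n = \sum_{i=1}^{s} P_i(n)\,\alpha_i^{\,n},
\]
with $\deg P_i$ less than the multiplicity of $\alpha_i$; deleting a possible root $\alpha_i = 0$ (which affects only finitely many $n$), we may take all $\alpha_i \ne 0$. Since only finitely many elements of $\C$ occur among the $a_k$, a routine specialization lets us assume they --- and hence the $\alpha_i$ and the coefficients of the $P_i$ --- lie in a number field $K$. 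Next I would pick a finite prime $\mathfrak{p}$ of the ring of integers $\mathcal{O}_K$ lying over a rational prime $p > 2$ such that every $\alpha_i$ is a $\mathfrak{p}$-adic unit and all coefficients of all $P_i$ are $\mathfrak{p}$-integral; only finitely many primes fail this. Let $\kappa := \#(\mathcal{O}_K/\mathfrak{p})$ and put $M := \kappa - 1$, so that $\beta_i := \alpha_i^{\,M} \equiv 1 \pmod{\mathfrak{p}}$ for each $i$.

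Then, fixing a residue $\varrho \in \{0,\dots,M-1\}$, I would interpolate the subsequence $(u_{M\ell+\varrho})_{\ell \ge 0}$. With $Q_i(x) := \alpha_i^{\,\varrho}\,P_i(Mx+\varrho) \in K[x]$ we have $u_{M\ell+\varrho} = \sum_i Q_i(\ell)\beta_i^{\,\ell}$; since $\beta_i \in 1 + \mathfrak{p}$ and $p > 2$, the $p$-adic logarithm satisfies $v_{\mathfrak{p}}(\log_{\mathfrak{p}}\beta_i) \ge 1$, and hence
\[
g_\varrho(z) := \sum_{i=1}^{s} Q_i(z)\,\exp_{\mathfrak{p}}\!\bigl(z\,\log_{\mathfrak{p}}\beta_i\bigr)
\]
is given by a power series in $z$ whose coefficients tend to $0$, so it converges on the ring of integers $\mathcal{O}_{\mathfrak{p}} \supseteq \Z$ of $K_{\mathfrak{p}}$ and satisfies $g_\varrho(\ell) = u_{M\ell+\varrho}$ for all $\ell \in \Z_{\ge 0}$. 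By Strassmann's theorem a nonzero power series of this shape has only finitely many zeros in $\mathcal{O}_{\mathfrak{p}}$, so either $g_\varrho \equiv 0$, whence $u_n = 0$ for every $n \equiv \varrho \pmod M$, or $u_n = 0$ for only finitely many $n$ in that progression. Taking the union over the $M$ residues $\varrho$ exhibits $\{n : u_n = 0\}$ as a finite set together with finitely many complete arithmetic progressions modulo $M$, as asserted.

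The crux is the $p$-adic interpolation: I must check that expanding $\exp_{\mathfrak{p}}(z\log_{\mathfrak{p}}\beta_i)$ in powers of $z$, multiplying by the polynomial $Q_i$, and summing over $i$ produces a series whose coefficients go to $0$ quickly enough to converge on $\mathcal{O}_{\mathfrak{p}}$ and to make Strassmann's bound applicable --- which is exactly where the conditions $p > 2$ and $v_{\mathfrak{p}}(\alpha_i) = 0$ are used; in residue characteristic $2$ one instead enlarges $M$ so that $\beta_i \equiv 1 \pmod{\mathfrak{p}^2}$. The closed-form expansion, the specialization to a number field, the existence of a good prime $\mathfrak{p}$, and the assembly over residue classes are all routine.
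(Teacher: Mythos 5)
The paper does not actually prove Theorem~\ref{SML}; it is quoted from the survey of Myerson and van der Poorten, so the only meaningful review is of your argument on its own terms. What you propose is the classical Skolem--Mahler--Lech proof (generalized power sum, passage to a $\mathfrak{p}$-adic completion, interpolation of each residue class modulo $M=\#(\mathcal{O}_K/\mathfrak{p})-1$ by a $\mathfrak{p}$-adic analytic function, Strassmann's theorem), and from the point where the data lie in a number field onward the argument is correct and complete; in particular it fully covers the paper's application, where the sequence is $N_{\mathbf{X}_\mu}(p^f)=\sum_z\nu(z)z^f$ with the $z$ algebraic (Weil numbers), so the roots $\alpha_i$ are algebraic from the start.

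There is, however, a genuine gap in the reduction step for the theorem as stated, i.e.\ for an arbitrary sequence of \emph{complex} numbers. You claim that ``a routine specialization lets us assume'' the recurrence coefficients, the roots $\alpha_i$, and the coefficients of the $P_i$ lie in a number field. Specialization does not achieve this: if some of the data are transcendental and you substitute algebraic values, a term $u_n$ that was nonzero may become zero, so the zero set can only grow under specialization, and knowing that the enlarged zero set is a finite union of progressions plus a finite set says nothing about the original one. (Avoiding the countably many bad loci forces the specialized values outside $\overline{\Q}$, which defeats the purpose.) The correct device, due to Lech, is not specialization but an \emph{embedding}: the subring of $\C$ generated by the $a_k$, the $u_1,\dots,u_r$, the $\alpha_i$ and their inverses, and the coefficients of the $P_i$ is a finitely generated integral domain of characteristic zero, and such a domain embeds into the ring of integers of a finite extension of $\Q_p$ for infinitely many primes $p$, with the $\alpha_i$ landing in the units. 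Since an embedding preserves vanishing and non-vanishing exactly, the rest of your argument then applies verbatim. Once you make this repair you should also arrange (by discarding finitely many further primes) that $\mathfrak{p}$ is unramified over $p$, or else enlarge $M$ so that $\beta_i\in 1+\mathfrak{p}^N$ with $N$ exceeding $e/(p-1)$; otherwise the inequality $v_{\mathfrak{p}}(\log_{\mathfrak{p}}\beta_i)\geq 1$ is not by itself enough to guarantee that $\exp_{\mathfrak{p}}(z\log_{\mathfrak{p}}\beta_i)$ converges on all of $\mathcal{O}_{\mathfrak{p}}$. These are standard fixes, but as written the proof does not establish the theorem in the stated generality.
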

One can easily verify that the sets of the form $F \cup A$, where $F$ is finite and $A$ is a finite union of arithmetic progressions form a Boolean algebra. 
To complete the proof of 
Theorem~\ref{APsets}, note that
from \eqref{ine-g} and 
Theorem~\ref{Algorithm} it follows that if $p>C$ and $f\in \mathscr{A}_k:=\Sigma'_{\mu_k}\setminus \bigcup_{1\leq i<k}\Sigma'_{\mu_i}$  then  
$\mf(\exp(\GG_q))=f  g_{k}(q)$.


\section{Free nilpotent Lie algebras}

In this section we will consider faithful representations of groups related to free nilpotent and free metabelian  Lie algebras. Let us recall some definitions. The free nilpotent Lie algebra of class $c$ on $n$ generators, denoted by $\f_{n,c}$, is the free object in the category of $n$-generated nilpotent Lie algebras (over $\Z$) of class $c$. More concretely, $\f_{n,c}$ can be constructed from the free Lie algebra on $n$ generators 
after quotienting out
the ideal generated by commutators of length $c+1$.

Recall that 
a Lie algebra $\mathfrak l$ is called metabelian if $[[\mathfrak l,\mathfrak l] ,[\mathfrak l,\mathfrak l]]=0$. 
 Similarly, one can define the {\it free metabelian Lie algebra} of class $c$ on $n$ generators as the free object in the category of $n$-generated metabelian Lie algebras of class $c$.

 For computational purposes, it will be convenient to work with {\it Hall bases} of free nilpotent Lie algebras. We will briefly review their constructions, and refer the reader to~\cite[Chapter II]{Bourbaki} or~\cite[Chapter IV]{Serre-Lie-Algebra}
for more details.

\subsection{Hall bases of free nilpotent Lie algebras} Our exposition of the notion of a Hall basis follows~~\cite[Chapter II]{Bourbaki} or~\cite[Chapter IV]{Serre-Lie-Algebra}. We will first need some basic definitions. 
A set $M$ with a map $M\times M\to M$ sending $(x,y)\mapsto [x,y]$ is called a {\it magma}. Let $X$ be a set and define inductively a family of sets $X_n$ $(n\geq 1)$ as follows: $X_1=X$ and $X_n=\amalg_{p+q=n}(X_p\times X_q)$. Let $M(X)$ denote the disjoint union $\amalg_{n=1}^\infty X_n$ and define $M(X)\times M(X) \to M(X)$ via 
$
X_p\times X_q\to X_{p+q}\subseteq M(X).
$
The magma $M(X)$ is called the {\it free magma} on $X$.  An element $w$ of $M(X)$ is
called a non-associative word on $X$. Its length, $\ell(w)$, is the unique $n$ such that $w\in X_n$.  
\begin{definition} A {\it Hall set} relative to $X$ is a totally ordered subset $\Hal$ of  $M(X)$ satisfying the following conditions: 
\begin{enumerate}
\item[(A)] If $u\in \Hal$, $v\in \Hal $ and $\ell(u)<\ell(v)$, then $u<v$ in the total order. 
\item[(B)] $X\subseteq \Hal$ and $\Hal \cap X_2$ consists of the products $[x,y]$ with $x, y$ in $X$ and  $x < y$. 
\item[(C)] An element $w$ of $M(X)$ of length $\geq 3$ belongs to $\Hal $ if and only if it is of the form $[a,[b,c]]$ with $a,b,c$ in $\Hal$, $[b,c]\in \Hal$, $b\leq a < [b,c]$ and $b < c$. 
\end{enumerate} 
\end{definition}
In the rest of this section, $[x,y]$ denotes the 
Lie bracket of a free Lie algebra.  Set $\Hal^i=\Hal\cap X_i$ and let $\#X=n$.  
The rank of 
$\f_n^k/\f_n^{k+1}$ is  given by Witt's formula:
\begin{equation*}
r_n(k):=\frac{1}{k}\sum_{d|k}\mu(d)n^{k/d},
\end{equation*}
where $\mu$ is the M\"{o}bius function. 
Under the natural projection
$\f_n^k\to \f_n^k/\f_n^{k+1}$,
the images of elements of $\Hal^k$ 
form a $\Z$-basis of 
$\f_n^k/\f_n^{k+1}$.
For a proof see~\cite[Chapter II]{Bourbaki} or~\cite[Theorem 4.2]{Serre-Lie-Algebra}. 
 Let $\f_{n,c}:=\f_{n,c}(\Z)$ be the free nilpotent $\Z$-Lie algebra on $n$ generators  and of class $c$; it is defined to be the quotient algebra $\f_n/\f^{c+1}_{n}$.   
 The following facts are well known. Since we do not know a reference and their proofs are easy, we outline the arguments.  
\begin{proposition}\label{free-properties} For $n \ge 2$ and $c \ge 2$, 
\begin{enumerate}
\item[1)] The image of $\bigcup_{i=1}^{c} \Hal^i$ 
under the natural projection $\f_n\to \f_{n,c}$
 is a basis of $\f_{n,c}$.
\item[2)] The image of $\bigcup_{i=2}^{c}  \Hal^i$ 
under the natural projection $\f_n\to \f_{n,c}$
is a basis of $\f'_{n,c}$.
\item[3)] The image of $\Hal^{c}$ 
under the natural projection $\f_n\to \f_{n,c}$
is a basis of $\ZZ(\f_{n,c})$.
\end{enumerate}
\end{proposition}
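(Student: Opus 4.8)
The plan is to read off all three statements from the structure of the free Lie algebra $\f_n$ as a graded $\Z$-module, together with the stated fact that under $\f_n^k\to\f_n^k/\f_n^{k+1}$ the elements of $\Hal^k$ descend to a $\Z$-basis. Recall that $\f_n=\bigoplus_{d\geq1}L_d$ is graded by degree in the generators $x_1,\dots,x_n$, and since $\f_n$ is generated in degree $1$ one has $\f_n^k=\bigoplus_{d\geq k}L_d$; in particular $\f_{n,c}=\f_n/\f_n^{c+1}$, and I will write $\overline{\phantom{a}}$ for the image of a submodule under the projection $\f_n\to\f_{n,c}$, so that $\overline{\f_n^k}=\f_n^k/\f_n^{c+1}$ for $1\leq k\leq c$ and $\overline{\f_n^{c+1}}=0$.

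For (1) I would run the standard filtered-module argument. The chain $\f_{n,c}=\overline{\f_n^1}\supseteq\overline{\f_n^2}\supseteq\cdots\supseteq\overline{\f_n^c}\supseteq\overline{\f_n^{c+1}}=0$ has, for $1\leq k\leq c$, successive quotients $\overline{\f_n^k}/\overline{\f_n^{k+1}}\cong\f_n^k/\f_n^{k+1}$ (here one uses $\f_n^{c+1}\subseteq\f_n^{k+1}$), each of which is a free $\Z$-module on the image of $\Hal^k$ by the quoted fact. Since every successive quotient is free, each step of the filtration splits over $\Z$, and an induction on the length of the filtration shows that elements lifting the bases of all the quotients constitute a $\Z$-basis of $\f_{n,c}$; this gives (1). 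For (2), observe first that $\f'_{n,c}=[\f_{n,c},\f_{n,c}]=\overline{[\f_n,\f_n]}=\overline{\f_n^2}$, and then repeat the same argument for the shortened chain $\overline{\f_n^2}\supseteq\cdots\supseteq\overline{\f_n^{c+1}}=0$, whose successive quotients are indexed by $2\leq k\leq c$.

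For (3), the easy inclusion is that $\overline{\f_n^c}$ is central: $[\f_{n,c},\overline{\f_n^c}]=\overline{[\f_n,\f_n^c]}=\overline{\f_n^{c+1}}=0$, and by the quoted fact $\overline{\f_n^c}$ has the image of $\Hal^c$ as a $\Z$-basis. For the reverse inclusion, take $z\in\ZZ(\f_{n,c})$, lift it to $\tilde z=\sum_{d\geq1}z_d\in\f_n$ with $z_d\in L_d$, and use that the images of $x_1,\dots,x_n$ generate $\f_{n,c}$: centrality of $z$ gives $[x_i,\tilde z]\in\f_n^{c+1}$ for every $i$. Since $\f_n^{c+1}$ has no homogeneous component in degrees $\leq c$, comparing degree-$(d+1)$ parts forces $[x_i,z_d]=0$ in $\f_n$ for all $i$ and all $d\leq c-1$. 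Each such $z_d$ is then centralized by every generator, hence lies in $\ZZ(\f_n)$; but the centre of a free Lie algebra on $n\geq2$ generators is zero --- for instance because its universal enveloping algebra is the free associative $\Z$-algebra on $x_1,\dots,x_n$, whose centre is $\Z$, while $\f_n$ meets $\Z$ trivially. Hence $z_d=0$ for $d\leq c-1$, so $\tilde z\in\f_n^c$ and $z\in\overline{\f_n^c}$, proving $\ZZ(\f_{n,c})=\overline{\f_n^c}$.

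The formal parts --- the filtration bookkeeping in (1) and (2), and the identity $\f'_{n,c}=\overline{\f_n^2}$ --- are routine. The one step with genuine content is the determination of $\ZZ(\f_{n,c})$ in (3); within it the two things to be careful about are the interplay between the degree grading of $\f_n$ and its lower central series filtration (so that $[x_i,\tilde z]\in\f_n^{c+1}$ really does annihilate $[x_i,z_d]$ for every $d\leq c-1$), and the triviality of the centre of a free Lie algebra on at least two generators. I expect this last point to be the main, though quite manageable, obstacle.
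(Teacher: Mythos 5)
Your proof is correct. For parts (1) and (2) you take essentially the same route as the paper: the paper invokes the Hall basis theorem for $\f_n$ directly, while you deduce the same conclusion from the graded statement (that $\Hal^k$ maps to a basis of $\f_n^k/\f_n^{k+1}$) via the splitting of a filtration with free successive quotients; the bookkeeping, including the identification $\f_n^k=\bigoplus_{d\geq k}L_d$ and $\f'_{n,c}=\overline{\f_n^2}$, is accurate. Where your write-up genuinely adds something is part (3): the paper dismisses it with ``the proof of the third statement is similar,'' but the inclusion $\ZZ(\f_{n,c})\subseteq\overline{\f_n^c}$ is not a formal consequence of the basis statements and does require an argument. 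Your argument --- lift a central element, compare homogeneous components of $[x_i,\tilde z]$ against $\f_n^{c+1}=\bigoplus_{d\geq c+1}L_d$ to kill the components $z_d$ with $d\leq c-1$, and then invoke the triviality of $\ZZ(\f_n)$ for $n\geq 2$ via the identification of $U(\f_n)$ with the free associative algebra --- is sound, and each ingredient (the embedding $\f_n\hookrightarrow U(\f_n)$ over $\Z$, the centre of $\Z\langle x_1,\dots,x_n\rangle$ being $\Z$, and the fact that an element commuting with all generators is central) is standard and correctly applied. So your proposal is a complete proof that fills the one nontrivial gap the paper leaves implicit.
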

\begin{proof}
The first statement follows from the fact that 
$\bigcup_{i=1}^\infty\Hal^i$ is a basis of $\f_n$, and 
the $\Z$-submodule $\f_n^{c+1}$ 
of $\f_n$
is generated by $\bigcup_{i=c+1}^\infty\Hal^i$. For the second statement, it is now enough to note that  
$\bigcup_{i=2}^{c}  \Hal^i$ generates the $\Z$-module $\f_{n,c}'$. The proof of the third statement is similar. 
\end{proof}
\begin{example}\label{Example-f_{33}} 
The image of 
$\bigcup_{i=1}^3\mathcal H^i$  
under the natural projection $\f_3\to \f_{3,3}$
 is a basis of $\f_{3,3}$. The elements of this union are explicitly given as follows. 
\begin{align*}
&\mathcal{H}^1:\, x_1,\, x_2,\, x_3,\\[.1cm] 
&\mathcal{H}^2:\, \ww_9=[x_1,x_2],\quad \ww_{10}=[x_1,x_3],\quad \ww_{11}=[x_2,x_3],\\[.1cm] 
&\mathcal{H}^3:\, \ww_1=[x_1,[x_1,x_2]],\quad \ww_2=[x_1,[x_1,x_3]],\quad \ww_3=[x_2,[x_1,x_2]],\quad \ww_4=[x_2,[x_1,x_3]],\\ 
&\qquad\,\, \ww_5=[x_2,[x_2,x_3]],\quad \ww_6=[x_3,[x_1,x_2]],\quad \ww_7=[x_3,[x_1,x_3]],\quad \ww_8=[x_3,[x_2,x_3]].
\end{align*}
Note that $\ZZ(\f_{3,3})\subseteq \f_{3,3}'$ and furthermore one can check that the image of $\{\ww_1,\dots, \ww_8\}$ is a basis of $\ZZ(\f_{3,3})$, while the image of $\{\ww_1,\dots,\ww_{11}\}$ is a basis of $\f_{3,3}'$.
By an explicit calculation and using the Jacobi identity, we obtain the following  commutator matrix
\begin{equation*}
F_{\f_{3,3}}(T_1,\dots,T_{11})=
\begin{footnotesize}
\left( {\begin{array}{*{20}c}
   {0} & {T_9 } & {T_{10} } &\vline &  {T_1 } & {T_2 } & {T_4  - T_6 }  \\
   { - T_9 } & {0} & {T_{11} } &\vline &  {T_3 } & {T_4 } & {T_5 }  \\
   { - T_{10} } & { - T_{11} } & {0} &\vline &  {T_6 } & {T_7 } & {T_8 }  \\
\hline
   { - T_1 } & { - T_3 } & { - T_6 } &\vline &  {0} & {0} & {0}  \\
   { - T_2 } & { - T_4 } & { - T_7 } &\vline &  {0} & {0} & {0}  \\
   {T_6  - T_4 } & { - T_5 } & { - T_8 } &\vline &  {0} & {0} & {0}  \\

 \end{array} } \right)
\end{footnotesize} 
 =\begin{pmatrix}
F_{11} & F_{12}\\[0.2cm]
-F_{12}^{\mathrm{tr}} & 0
\end{pmatrix}.
\end{equation*}
\end{example}
Now we consider the general case. Set 
$$m:=\sum_{k=2}^{c} r_n(k)\ \text{ and }\ m_1:=\sum_{k=1}^{c-1} r_n(k).$$
By Proposition~\ref{free-properties}, the natural projection maps $\cup_{i=1}^{c-1}  \Hal^i$  to a basis of $\f_{n,c}/(\ZZ(\f_{n,c}))$. Note that the commutator matrix $F_{\f_{n,c}}({\bf T})\in M_{m_1}(\Z[{\bf T}])$ is a skew-symmetric matrix whose entries are $\Z$-linear forms in $m$ variables. 
We label the variables as follows. For each $2\leq k\leq c$ we write ${\bf T}^{(k)}=(T_1^{(k)},\dots, T_{r_n(k)}^{(k)})$,
so that ${\bf T}=({\bf T}^{(k)})_{2\leq k\leq c}$ and 
\begin{equation*}
F_{\f_{n,c}}({\bf T})=
\begin{footnotesize}
\begin{pmatrix}
F_{11}({\bf T}^{(2)}) & F_{12}({\bf T}^{(3)}) & \dots & F_{1(c-1)}({\bf T}^{(c)})\\[.2cm]
F_{21}({\bf T}^{(3)}) & F_{22}({\bf T}^{(4)}) & \dots & 0\\
\vdots & \vdots & \vdots & \vdots\\
F_{(c-1)1}({\bf T}^{(c)}) & 0 & 0 & 0\\
\end{pmatrix}
\end{footnotesize},
\end{equation*}
where $F_{ij}({\bf T}^{(i+j)})$ is the zero matrix if $i +j > c$, and  $F_{ij}({\bf T}^{(i+j)})=-F_{ji}^{\mathrm{tr}}({\bf T}^{(i+j)})$.

In order to 
use Theorem~\ref{Algorithm}
to compute the faithful dimension of $\exp(\f_{n,c}\otimes_\Z\F_q)$ we need to find $r_n(c)$ vectors ${\bf a}_\ell=({\bf a}_\ell^{(k)})$, ${2\leq k\leq c}$, with ${\bf a}_\ell^{(k)}=(a_{\ell 1}^{(k)},\dots, a_{\ell r_n(k)}^{(k)})$ such that the vectors ${\bf a}_\ell$ minimize 
$$
\sum_{\ell=1}^{r_n(c)} fq^{\frac{\rank_{\F_q}(F_{\f_{n,c}}({\bf a}_\ell))}{2}},
$$
subject to the condition 
\begin{equation}\label{ai-center}
\begin{footnotesize}
\begin{pmatrix}
a_{1 1}^{(c)} & a_{1 2}^{(c)} & \dots & a_{1 r_n(c)}^{(c)}\\
\vdots &  &  & \vdots\\[.1cm]
a_{r_n(c) 1}^{(c)} & a_{r_n(c) 2}^{(c)} & \dots & a_{r_n(c) r_n(c)}^{(c)}
\end{pmatrix}
\end{footnotesize}
\in \GL_{r_n(c)}(\F_q).
\end{equation}
Let us define the {\it reduced commutator matrix} of $\f_{n,c}$
to be 
\begin{equation*}
F^{\mathrm{red}}_{\f_{n,c}}({\bf T_c})=
\begin{footnotesize}
\begin{pmatrix}
0 & 0 & \dots & 0 &  F_{1(c-1)}({\bf T}^{(c)})\\[.2cm]
0 & 0 & \dots & F_{2(c-2)}({\bf T}^{(c)}) & 0\\
\vdots & \vdots & \vdots & \vdots & \vdots\\
0 & F_{(c-2)2}({\bf T}^{(c)}) & 0 & \cdots & 0 \\
F_{(c-1)1}({\bf T}^{(c)}) & 0 & 0 & \cdots & 0\\
\end{pmatrix}
\end{footnotesize},
\end{equation*}
In other words,  $F^{\mathrm{red}}_{\f_{n,c}}$ is the matrix obtained
from $F_{\f_{n,c}}$
 by setting the variables ${\bf T}^{(k)}$ equal to zero when $k\neq c$, so that the $(i,c-i)$-block of 
$F^{\mathrm{red}}_{\f_{n,c}}$
is equal to 
$F_{i(c-i)}({\bf T}^{(c)})$ for $1\leq i\leq c-1$, and all other blocks
in $F^{\mathrm{red}}_{\f_{n,c}}$
 are equal to zero.
For instance in Example~\ref{Example-f_{33}}, the reduced commutator matrix is 
$$
\begin{pmatrix}
0 & F_{12}\\
-F_{12}^{\mathrm{tr}} & 0
\end{pmatrix}.$$
 
Note that for each $2\leq k\leq c$, the variables ${\bf T}^{(k)}$ only occur in the matrices $F_{ij}$ with $i+j=k$. 
Clearly,
\begin{equation}
\label{eq:rkFqall}
\rank_{\F_q}(F_{\f_{n,c}}({\bf a}_\ell))\geq \sum_{i+j=c}\rank_{\F_q}(F_{ij}({\bf a}_\ell^{(c)})).
\end{equation}
Note that the only entries of ${\bf a}_\ell$ that appear in the invertibility condition 
\eqref{ai-center} are those of $\ba_\ell^{(c)}$. Further, by setting all of the components of 
$\ba_\ell^k$ to zero for $2\leq k<c$, we do not increase the rank of the matrix  $F_{\f_{n,c}}({\ba}_\ell)$.
Therefore the minima of the two sides of \eqref{eq:rkFqall} are equal.
Thus from Theorem~\ref{Algorithm} we can conclude the following proposition.
\begin{proposition}\label{algorithm-free} Let $F:=F^{\mathrm{red}}_{\f_{n,c}}$ be the reduced commutator matrix of $\f_{n,c}$. Then the faithful dimension of $\exp(\f_{n,c}\otimes_\Z\F_q)$ is
\begin{equation*}
\min\left\{ \sum_{\ell=1}^{r_n(c)}  fq^{\frac{\rank_{\F_q}\left(F\left(a_{\ell 1},\dots,a_{\ell r_n(c)}\right)\right)}{2}}: \begin{pmatrix}
a_{1 1} & \dots & a_{1 r_n(c)}\\
\vdots  & \ddots & \vdots\\
 a_{r_n(c) 1} & \dots & a_{r_n(c) r_n(c)}
\end{pmatrix}\in\mathrm{GL}_{r_n(c)}(\F_q)\right\}.
\end{equation*}
\end{proposition}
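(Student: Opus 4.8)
The plan is to deduce the statement directly from Theorem~\ref{Algorithm} applied to $\g=\f_{n,c}$, exploiting the block structure of the commutator matrix set up above. First I would pin down the invariants $l_1,l_2,m$ of Section~\ref{Nilpotent-Section} for $\f_{n,c}$. By Proposition~\ref{free-properties} the centre $\ZZ(\f_{n,c})$ is spanned by the image of $\Hal^c$ and is contained in $\f_{n,c}'$; hence $\ZZ(\f_{n,c})\cap\f_{n,c}'=\ZZ(\f_{n,c})$, so $l_1=\rank_\Z\ZZ(\f_{n,c})=r_n(c)$, the quotient $\ZZ(\f_{n,c})/(\ZZ(\f_{n,c})\cap\f_{n,c}')$ is trivial, so $l_2=0$, and $m=\rank_\Z\f_{n,c}'=\sum_{k=2}^{c}r_n(k)$. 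I would also note that $\f_{n,c}$ and all the subquotients entering the constants $C_1,C_2,C_3$ of Theorem~\ref{Algorithm} are free $\Z$-modules, as one reads off from the Hall basis in Proposition~\ref{free-properties}; hence $C_1=C_2=C_3=1$, and Theorem~\ref{Algorithm} is applicable whenever $p>c$. Finally, ordering the semibasis of $\f_{n,c}'$ so that the part dual to $\Hal^c$ comes first --- a harmless permutation of the variables $\mathbf{T}$ --- the first $l_1$ coordinates, namely those occurring in the invertibility constraint of Theorem~\ref{Algorithm}, become precisely the block $\mathbf{T}^{(c)}$, and Theorem~\ref{Algorithm} reads
\begin{equation*}
\mf(\exp(\f_{n,c}\otimes_\Z\F_q))=\min\left\{\sum_{\ell=1}^{r_n(c)}fq^{\frac{\rank_{\F_q}(F_{\f_{n,c}}(\mathbf{a}_\ell))}{2}}:\ \big(\mathbf{a}_\ell^{(c)}\big)_{1\le\ell\le r_n(c)}\in\GL_{r_n(c)}(\F_q)\right\},
\end{equation*}
the minimum being over families $\mathbf{a}_1,\dots,\mathbf{a}_{r_n(c)}\in\F_q^{m}$ whose $\mathbf{T}^{(c)}$-parts $\mathbf{a}_\ell^{(c)}$ assemble into an invertible matrix.

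Next I would use the two elementary block facts recorded before the statement. The $(i,j)$-block of $F_{\f_{n,c}}(\mathbf{T})$ is $F_{ij}(\mathbf{T}^{(i+j)})$, which vanishes for $i+j>c$ and involves no variable outside $\mathbf{T}^{(i+j)}$; hence substituting $\mathbf{T}^{(k)}=0$ for all $k\ne c$ turns $F_{\f_{n,c}}$ into precisely the reduced matrix $F:=F^{\mathrm{red}}_{\f_{n,c}}(\mathbf{T}^{(c)})$, which is block-anti-diagonal with the blocks $F_{i(c-i)}(\mathbf{T}^{(c)})$, $1\le i\le c-1$, so that reversing the block columns gives $\rank_{\F_q}F(\mathbf{v})=\sum_{i+j=c}\rank_{\F_q}F_{ij}(\mathbf{v})$ for every $\mathbf{v}\in\F_q^{r_n(c)}$. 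The two inequalities are now short. For the upper bound: given a matrix attaining the minimum on the right-hand side of the proposition, append zeros in the $\mathbf{T}^{(k)}$-coordinates, $k<c$, to each of its rows; the resulting family is feasible for the displayed specialization of Theorem~\ref{Algorithm}, on it $F_{\f_{n,c}}$ agrees with $F$ evaluated on the original rows, and hence $\mf(\exp(\f_{n,c}\otimes_\Z\F_q))$ is at most the right-hand side. For the lower bound: take a family $\mathbf{a}_1,\dots,\mathbf{a}_{r_n(c)}\in\F_q^{m}$ attaining the minimum in Theorem~\ref{Algorithm}; its invertibility constraint involves only the projections $\mathbf{a}_\ell^{(c)}$, so these form a feasible matrix for the right-hand side, and combining~\eqref{eq:rkFqall} with the rank identity above yields $\rank_{\F_q}F_{\f_{n,c}}(\mathbf{a}_\ell)\ge\rank_{\F_q}F(\mathbf{a}_\ell^{(c)})$ for each $\ell$, so that
\begin{equation*}
\mf(\exp(\f_{n,c}\otimes_\Z\F_q))=\sum_{\ell=1}^{r_n(c)}fq^{\frac{\rank_{\F_q}(F_{\f_{n,c}}(\mathbf{a}_\ell))}{2}}\ \ge\ \sum_{\ell=1}^{r_n(c)}fq^{\frac{\rank_{\F_q}(F(\mathbf{a}_\ell^{(c)}))}{2}}
\end{equation*}
is at least the right-hand side. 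Together the two inequalities give the proposition.

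There is no deep ingredient here beyond Theorem~\ref{Algorithm}: the argument is bookkeeping around the block structure, and the points needing a little care are the following. (a) One must match the coordinates in the invertibility constraint of Theorem~\ref{Algorithm} with the top-degree block $\mathbf{T}^{(c)}$, which rests on the identification $\ZZ(\f_{n,c})=\Span_\Z(\text{image of }\Hal^c)\subseteq\f_{n,c}'$ from Proposition~\ref{free-properties}. (b) One needs the rank computation for the block-anti-diagonal matrix $F^{\mathrm{red}}_{\f_{n,c}}$ and, behind~\eqref{eq:rkFqall}, the observation that reversing the block columns of $F_{\f_{n,c}}(\mathbf{a}_\ell)$ makes it block-upper-triangular with the $F_{i(c-i)}(\mathbf{a}_\ell^{(c)})$ on the diagonal, whence its rank is at least the sum of the ranks of those diagonal blocks. (c) One should check $C_1=C_2=C_3=1$ so that the hypothesis $p>\max\{c,C_1,C_2,C_3\}$ of Theorem~\ref{Algorithm} reduces to $p>c$, which is routine from the torsion-freeness of the $\Z$-modules attached to $\f_{n,c}$. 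I expect (b) to be the only mildly delicate point; (a) and (c) are purely routine.
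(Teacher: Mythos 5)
Your proposal is correct and follows essentially the same route as the paper: identify $l_1=r_n(c)$, $l_2=0$ via Proposition~\ref{free-properties}, apply Theorem~\ref{Algorithm}, and then use the block structure (setting ${\bf T}^{(k)}=0$ for $k\neq c$ does not affect feasibility and cannot increase rank, while \eqref{eq:rkFqall} together with the block-anti-diagonal form of $F^{\mathrm{red}}_{\f_{n,c}}$ gives the reverse inequality) to replace $F_{\f_{n,c}}$ by the reduced matrix. The paper compresses the two inequalities into the single remark that the minima of the two sides of \eqref{eq:rkFqall} coincide, but the content is the same.
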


\subsection{Proof of Theorem~\ref{thm:free}} We now prove Theorem~\ref{thm:free}. The proof relies upon an explicit description of the commutator matrix and so it is combinatorial in nature.  
First we consider the statement for $\f_{n,2}$. 
The image of the set  
$$\Hal^1\cup\Hal^2= \{x_1,\dots, x_n, x_{ij}: 1\leq i<j\leq n\},$$
where $x_{ij}:=[x_i,x_j]$ for $1\leq i<j\leq n$, is
a basis of  $\f_{n,2}$. 
Thus
the commutator matrix of $\f_{n,2}$ is 
\begin{equation*}
F_{\f_{n,2}}({\bf T})=
\begin{footnotesize}
\begin{pmatrix}
0 & T_{12} & T_{13}& \dots & T_{1n}\\
-T_{12} & 0 & T_{23} & \dots & T_{2n}\\
-T_{13} & -T_{23} & 0 & \dots & T_{3n}\\
\vdots & \vdots & \vdots & \ddots & \vdots\\
-T_{1n} & -T_{2n} & -T_{3n} & \dots & 0
\end{pmatrix}
\end{footnotesize}.
\end{equation*}
Observe that each variable $T_{ij}$, $1\leq i<j\leq n$, appears exactly twice. Therefore if exactly one of the $T_{ij}$'s is non-zero, then the rank of the above matrix will be equal to 2. Now by applying Proposition~\ref{algorithm-free},  
we obtain the statement of the theorem for $\f_{n,2}$.  
 
Let us now turn to the case of $\f_{n,3}$. First note that the reduced commutator matrix of $\f_{n,3}$ is equal to
$$
F({\bf T}^{(3)}):=F_{\f_{n,3}}^{\mathrm{red}}({\bf T}^{(3)})=
\begin{footnotesize}
\begin{pmatrix}
0 & F_{12}({\bf T}^{(3)})\\
-F_{12}^{\mathrm{tr}}({\bf T}^{(3)}) & 0
\end{pmatrix}
\end{footnotesize}
.
$$
 In particular, 
$\rank_{\F_q}(F({\bf T}^{(3)}))=2\rank_{\F_q}(F_{12}({\bf T}^{(3)}))$.
Our goal is to find  $F_{12}({\bf T}^{(3)})$ explicitly with respect to the basis obtained by the image of $\Hal^1\cup\Hal^2\cup\Hal^3$. The latter set consists of
\begin{equation*}
\begin{split}
\Hal^1 &=\left\{x_i: 1\leq i\leq n\right\};\\
\Hal^2&=\left\{x_{ij}:=[x_i,x_j]: 1\leq i<j\leq n\right\};\\
\Hal^3&=\left\{x_{ijk}:=[x_i,[x_j,x_k]]: 1\leq j\leq i\leq n, \quad 1\leq j<k\leq n\right\}.
\end{split}
\end{equation*}
Consider the sets 
$R_1 := \{ 1,2, \dots, n \},$ 
$R_2:= \{ (i,j): 1 \le i < j \le n \},$
and 
\begin{equation*}
R_3:= \{ (i,j,k): 1 \le j \le i \le n, 1 \le j <k \le n \}.
\end{equation*}
The elements of the $R_i$'s  parametrize the 
sets $\Hal^1,\Hal^2$ and $\Hal^3$.
Set \[
d:=\# R_3=r_n(3)=(n^3-n)/3,
\] and define
\begin{equation*}
\begin{split}
R_3^0 &:= \{ (i,j,i): 1\leq j<i\leq n \} \cup \{ (i,i,k): 1\leq i<k\leq n \}; \\
R_3^+ & :=  \{ (i,j,k) \in R_3: j<i<k \}; \\
R_3^- &:= \{ (i,j,k) \in R_3: j<k<i \}.
\end{split}
\end{equation*}
It is clear that $R_3= R_3^0 \cup R_3^+ \cup R_3^-$ is a partition of $R_3$. 
We will now give an explicit description of $F_{12}({\bf T}^{(3)})$ in terms of these sets. It will be more convenient to use the variables $T_{ \alpha}$ with $ \alpha \in R_3$ instead of ${\bf T}^{(3)}=(T_1^{(3)},\dots, T_{r_n(3)}^{(3)})$. We will also use $\bf T$ to
denote the vector with entries $T_\alpha$ with $\alpha\in R_3$. For instance $T_{213}$ and $T_{312}$ correspond, respectively, to $T_4$ and $T_6$ in Example~\ref{Example-f_{33}}.
For $i \in R_1$ and $(j,k) \in R_2$,
 a simple computation shows that   
\begin{equation}\label{commutators}
[x_i, [x_j, x_k] ]= \begin{cases}  x_{ijk}   & \textrm{if } \,  (i,j,k) \in R_3;  \\
x_{jik}- x_{kij} & \textrm{otherwise}. \\   
\end{cases} 
\end{equation}
Therefore the entry of the matrix $F_{12}$ in the row associated to $i\in R_1$ and column associated to 
$(j,k)\in R_2$ is given by  $T_{ijk} $ if $ (i,j,k) \in R_3 $ and by $T_{jik}- T_{kij}$, otherwise. 
\begin{lemma}\label{sparse}
For $1 \le i,j,k \le n$ the following holds:
 \begin{enumerate}
\item[(a)] For $i<k$, the variable $T_{iik}$ appears exactly once in $F_{12}(\T)$, namely, in  row $i$ and column $(i,k)$.

\item[(b)] For $j<i$, the variable $T_{iji}$ appears exactly once in $F_{12}(\T)$, namely, in  row $i$ and column $(j,i)$.

\item[(c)] For $(i,j,k) \in R^+_3$, the variable $T_{ijk}$ appears exactly twice in the entries of $F_{12}(\T)$. Namely, the entry in row $i$ and column $(j,k)$  is equal to $T_{ijk}$, and the entry  in row $j$ and column $(i,k)$ is equal to $T_{ijk}-T_{kji}$.

\item[(d)] For $(i,j,k) \in R^-_3$,  the variable $T_{ijk}$ appears exactly twice in $F_{12}(\T)$. Namely, the entry in row 
$i$ and column  $(j,k)$ is equal to $T_{ijk}$, and
the entry in row  $j$ and column $(k,i)$ is equal to $T_{kji}-T_{ijk}$. 

\end{enumerate}
 \end{lemma}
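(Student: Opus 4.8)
The plan is to read off the entries of $F_{12}(\T)$ directly from~\eqref{commutators} and then, for each variable occurring in it, enumerate exhaustively the positions in which it can appear. Recall from the paragraph preceding the lemma that the entry of $F_{12}(\T)$ in the row indexed by $i\in R_1$ and the column indexed by $(j,k)\in R_2$ (so $j<k$) equals $T_{ijk}$ when $(i,j,k)\in R_3$, equivalently when $j\leq i$, and equals $T_{jik}-T_{kij}$ when $i<j$. Consequently a fixed variable $T_{abc}$ can occur in an entry only in one of three ways: as the ``first-type'' term $T_{abc}$ coming from a triple $(a,b,c)\in R_3$ sitting in row $a$, column $(b,c)$; as the summand $+T_{jik}$ of a second-type entry in row $i$, column $(j,k)$ with $i<j$, which forces $(j,i,k)=(a,b,c)$; or as the summand $-T_{kij}$ of such an entry, which forces $(k,i,j)=(a,b,c)$.

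For each of the four parts I would run through these three possibilities and impose the relevant constraints. The first possibility always contributes exactly one occurrence: since $(a,b,c)\in R_3$ we have $b<c$, so $(b,c)\in R_2$, and $b\leq a$, so the entry is genuinely of the first type. For the second possibility one solves $j=a$, $i=b$, $k=c$ and then checks $i<j$ (needed so the entry has the second type) together with $j<k$ (needed so $(j,k)\in R_2$), that is, $b<a$ and $a<c$. For the third possibility one solves $k=a$, $i=b$, $j=c$ and checks $i<j$ and $j<k$, that is, $b<c$ and $c<a$. A short inspection of the three subfamilies of $R_3$ now finishes the argument: if $(a,b,c)\in R_3^0$, i.e.\ $(a,b,c)=(i,i,k)$ with $i<k$ or $(a,b,c)=(i,j,i)$ with $j<i$, then in both the second and third possibilities the constraint $j<k$ fails, so the variable occurs once, which is parts~(a) and~(b); if $(a,b,c)=(i,j,k)\in R_3^+$, so $j<i<k$, then the second possibility succeeds, placing a copy in row $j$, column $(i,k)$ with entry $T_{ijk}-T_{kji}$, while the third fails as it would demand $k<i$, which is part~(c); and if $(a,b,c)=(i,j,k)\in R_3^-$, so $j<k<i$, then the third possibility succeeds, placing a copy in row $j$, column $(k,i)$ with entry $T_{kji}-T_{ijk}$, while the second fails as it would demand $i<k$, which is part~(d).

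There is no genuine obstacle here: the whole argument is bookkeeping with permutations of index triples and two families of inequalities. The only point requiring care is not to conflate the row/column indices $(i,j,k)$ of a generic entry with the internal indices $(a,b,c)$ of the variable $T_{abc}$ appearing in it, since in the second and third possibilities these differ by a transposition; once this is set up cleanly, each of parts~(a)--(d) drops out by checking a couple of inequalities, and one simultaneously reads off the exact rows, columns and signs asserted in the statement.
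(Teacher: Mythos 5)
Your argument is correct and follows essentially the same route as the paper: both rest on the entry formula \eqref{commutators} and an enumeration of the at most three positions, indexed by permutations of the triple, at which a given variable $T_{abc}$ can occur, with the Jacobi-derived second-type entries supplying the signs. One small slip: for $(a,b,c)=(i,i,k)$ the second possibility is excluded because your condition ``$i<j$'' (that is, $b<a$) fails, not because ``$j<k$'' (that is, $a<c$) fails; the conclusion is unaffected, since in every sub-case of $R_3^0$ at least one of the required inequalities does fail.
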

\begin{proof}[Proof of the Lemma~\ref{sparse}]
Parts (a) and (b) of the lemma are clear, since if $T_{ijk}$ appears twice then $i,j,k$ must be pairwise distinct. 
We will now prove part (c). Suppose $j<i<k$. It is clear from~\eqref{commutators} that $T_{ijk}$ can only potentially appear in a bracket of the form $[x_{\alpha}, [x_{\beta}, x_{\gamma}]]$, where the indices are permutations of $i,j,k$ and 
$ \beta< \gamma$. This leaves three possibilities $( \alpha, \beta, \gamma)= (i, j, k),\, (j, i, k),\, (k, j, i)$. However, since
$[x_k, [x_j, x_i]]=x_{kji} \in \Hal^3$, the third possibility does not occur. Moreover, 
$$[x_j,[x_i,x_k] ]= [x_i, [x_j, x_k]]- [x_k, [x_j, x_i] ]= x_{ijk}-x_{kji},$$ which proves the statement. Part (d) can be proven in a similar way. 
\end{proof} 
From this it follows that $\rank_{\F_q}(F_{12}({\bf a}))\geq 1$ for every non-zero vector ${\bf a}\in\F_q^{d}$, where $d:=r_n(3)$. 
In the rest of this section, for each $\alpha\in R_3$, we will find a vector ${\bf a}_\alpha=(a_{1,\alpha},\dots,a_{d,\alpha})\in \F_q^d$ such that  
the rank of $M_\alpha:=F_{12}({\bf a}_\alpha)$ is equal to $1$ and   the $M_\alpha$'s  are linearly independent matrices over $\F_q$. It follows that the ${\bf a}_\alpha$'s are linearly independent over $\F_q$ and thus the matrix $({\bf a}_\alpha)_{\alpha\in R_3}$ is invertible. Then from Proposition~\ref{algorithm-free} we conclude that the faithful dimension of $\exp(\f_{n,3}\otimes_\Z\F_q)$ is equal to $r_n(3)fq$ when $p\geq 5$. 

We now construct $M_\alpha$ for $\alpha\in R_3$. For every $ \delta=(i,j,k)$ with $1\leq i,j,k\leq n$, define 
$$ \delta^+:= \max\{i,j,k\}, \quad  \delta^-:=\min\{i,j,k\}, \quad \delta^0:= i+j+k- \delta^+-\delta^-. $$

\noindent{\bf I)}: Suppose $ \alpha=(i,j,k) \in R_3^0$, that is $i=k$ or $i=j$. In either case, $T_{ijk}$ appears only once in $F_{12}(\T)$ by parts (a) and (b) of Lemma~\ref{sparse}. Let $M_{\alpha}$ be the matrix
obtained from $F_{12}({\bf T})$ by setting $T_{ijk}=1$ and letting the rest of variables to be zero. Then the rank of $M_\alpha$ is $1$. 

\noindent{\bf II)}: Let $ \alpha=(i,j,k) \in R_3^+$, that is $j<i<k$. Let $M_{\alpha}$ be the matrix obtained from $F_{12}({\bf T})$ by  setting $T_{ijk}= T_{iik}=T_{jjk}=1$, and zero for the rest of variables. From part (c) of Lemma~\ref{sparse} we can see that in this case:
\[ M_{\alpha}= 
\begin{footnotesize}    
\begin{blockarray}{cccccc}
 &   & _{(j,k)} &  & _{(i,k)}  & \\
\begin{block}{c(ccccc)}
    &  & \vdots &           &  \vdots &      \\
_{j}   & \dots  & T_{jjk} & \dots  & T_{ijk}- T_{kji} & \dots \\
   &   & \vdots &   & \vdots &  \\
_{i}   & \dots  & T_{ijk} & \dots  & T_{iik} & \dots \\
    &  & \vdots &           &  \vdots &      \\
\end{block}
\end{blockarray} =
\begin{blockarray}{cccccc}
 &   & _{(j,k)} &  & _{(i,k)}  & \\
\begin{block}{c(ccccc)}
    &   & \vdots &           &  \vdots &      \\
_{j}   & \dots  & 1& \dots  & 1 & \dots \\
   &   & \vdots&   & \vdots &  \\
_{i}   & \dots  & 1& \dots  & 1 & \dots \\
    &   & \vdots &           &  \vdots &      \\
\end{block}
\end{blockarray}
\end{footnotesize}\, .
\]
Note that this matrix has rank $1$ and the rows in which non-zero entries are located correspond to the $ \alpha^-$ and $ \alpha^0$. 

\noindent{\bf III)}:
Let $\alpha=(i,j,k) \in R_3^-$, that is $  j<k<i $. Let $M_{\alpha}$ be the matrix obtained from $F_{12}({\bf T})$ by  setting $T_{jjk}= T_{ ijk}=1$ and $ T_{iki}=-1$ and zero elsewhere. One can verify that in this case
\[ M_{\alpha}=
\begin{footnotesize}     
\begin{blockarray}{cccccc}
 &   & _{(j,k)} &  & _{(k,i)}  & \\
\begin{block}{c(ccccc)}
    &  & \vdots &           &  \vdots &      \\
_{j}   & \dots  & T_{jjk} & \dots  & T_{kji}- T_{ijk} & \dots \\
   &  & \vdots &  & \vdots & \\
_{i}   & \dots  & T_{ijk} & \dots  & T_{iki} & \dots \\
    &  & \vdots &           &  \vdots &      \\
\end{block}
\end{blockarray} =
\begin{blockarray}{cccccc}
 &   & _{(j,k)} &  & _{(k,i)}  & \\
\begin{block}{c(ccccc)}
    &  & \vdots &           &  \vdots &      \\
_{j}   & \dots  & 1& \dots  & -1 & \dots \\
   &   & \vdots&   & \vdots &  \\
_{i}   & \dots  & 1& \dots  & -1 & \dots \\
    &   & \vdots &           &  \vdots &      \\
\end{block}
\end{blockarray}
\end{footnotesize}\, .
\]
Note that this matrix has rank $1$ and the rows in which non-zero entries are located correspond to the $ \alpha^-$ and $ \alpha^+$. 

Let us show that the matrices constructed above are linearly independent. Suppose that  
\[ M= \sum_{\alpha  \in R_3} c_{ \alpha} M_{ \alpha}=0. \]
We will show that $c_{ \alpha}=0$ for all $ \alpha \in R_3$. Let $\beta=(s,r,t)\in R_3^+$ be an arbitrary element. 
 The definition of $R_3^+$ implies that  $r<s<t$.
Consider the entry of $M$  associated to row $s$ and  column $(r,t)$. 
Note that since $s= \beta^0$, a case by case verification  
shows that the  $(s, (r,t))$ entry of every matrix $M_{ \alpha}$ with $ \alpha \in R_3^0\cup R_3^-$ is equal to zero.  
Furthermore, the only $ \alpha \in R_3^+$ for which $M_{ \alpha}$ has a non-zero entry in row $s$ and column $(r,t)$ 
is $ \alpha=\beta$. This gives $ c_\beta=0$, which, in turn, shows that $c_{ \beta}=0$ for all $ \beta \in R_3^+$.

Now let $\gamma=(t',r',s')\in R_3^-$ be an arbitrary element. Then from the definition of $R_3^-$ we have $r'<s'<t'$. Consider the entry of $M$ associated to row $t'$ and column $(r',s')$. The $(t',(r',s'))$ entry of every matrix $M_{ \alpha}$ with $ \alpha \in R_3^0$ is equal to zero. Furthermore, 
the only $ \alpha \in R_3^-$ for which $M_{ \alpha}$ has a nonzero entry in row $t'$ and column $ (r',s')$ is
$ \alpha=\gamma$. Comparing coefficients yields $ c_\gamma=0$.  This shows that $c_\gamma=0$ for all 
$ \gamma \in R_3^-$. Hence 
\[ M=  \sum_{\alpha  \in R_3^0} c_{ \alpha} M_{ \alpha}=0. \]
It is however clear that for  distinct $ \alpha_1 , \alpha_2 \in R_3^0$ non-zero entries of $ M_{ \alpha}$ do not overlap. 
This implies that $ c_{ \alpha}=0$ for all $ \alpha \in R_3^0$, and the proof is complete.

\subsection{Proof of Theorem~\ref{thm:free-meta}}  We first note that, similar to the case of $\f_{n,c}$, one can define the {\it reduced commutator matrix} of $\m_{n,c}$ and prove the same result as Proposition~\ref{algorithm-free}. Let us now turn to the Lie algebra $\m_{2,c}$ generated by $x_1$ and $x_2$. Note that since $\m_{2,c}$ is metabelian, the only elements of the Hall basis whose images in  $\m_{2,c}$ are non-zero are of the form 
\begin{equation}\label{long}
[x_{i_k},[x_{i_{k-1}}, \dots, [x_{i_1}, x_{2}] \dots ],
\end{equation} where  $ k \le c-1$ and $1=i_1\leq i_2 \le \cdots \le i_k\leq 2$. Moreover,  the images in $\m_{2,c}$ of the words
in~\eqref{long}
  with $k=c-1$
form a basis of the centre of $\m_{2,c}$. Write $y^{k}_\ell$ for
{the image in $\m_{2,c}$ of the 
 unique element of the Hall basis of the form~\eqref{long} of length $k$ in which the generator $x_2$ occurs $\ell$ times. For instance, 
$y^3_1=[x_1,[x_1, x_2]]$ and $y^3_2=[x_2, [x_1, x_2]]$. Then
the image of $\{y^c_1,\dots,y^c_{c-1}\}$ is a basis of $\m_{2,c}^{c}=\ZZ(\m_{2,c})$ and 
the image of
$\{y^{c-1}_1,\dots,y^{c-1}_{c-2},y^c_1,\dots,y^c_{c-1}\}$ is a basis of $\m_{2,c}^{c-1}$. 
The fact that $\m_{2,c}$ is metabelian implies that 
\begin{equation}
y^{k+1}_{\ell}=[x_1, y^k_\ell],\qquad 
y^{k+1}_{\ell+1}=[x_2, y^k_\ell].
\end{equation}
Thus the reduced commutator matrix of $\m_{2,c}$ is of the form 
$$
\begin{pmatrix}
0 & F\\
-F^{\mathrm{tr}} & 0
\end{pmatrix},
$$  
where $F$ is a $2\times (c-2)$ matrix of the form
\[ F(T_1, \dots, T_{c-1})= \begin{pmatrix}
T_1 & T_2   & \dots & T_{c-2}   \\
T_2  & T_3   & \dots & T_{c-1}
\end{pmatrix}.
\]
Note that 
$\rank_{\F_q}(F({\bf a}))\geq 1$
for every non-zero vector ${\bf a}\in\F_q^{c-1}$,   and the matrix $F(T_1, \dots, T_{c-1})$ has rank $1$ if we set $T_i= \lambda^{i-1}$, where $ \lambda \in \F_q$.
Since  $q\geq p>c$,  we can find at least $c-1$ distinct elements $ \lambda_1, \dots, \lambda_{c-1}$ in $\F_q$. Consider the $(c-1)\times (c-1)$ matrix with the $i$th row given by 
\[ (T_1, \dots, T_{c-1})= (1, \lambda_i, \dots,  \lambda_i^{c-2}). \]
This is the well-known Vandermonde matrix, whose determinant is non-zero. 
 Similar to the proof of Proposition \ref{algorithm-free}, the 
 claim follows from Theorem~\ref{Algorithm}.

\begin{remark}
In the above proof,  the issue of finding $c-1$ matrices of rank equal to $1$ is intimately related to finding points in general position on the 
rational normal curve obtained as the image of the  Veronese map  given by 
$$\nu_{c-2}: \mathbf{P}^1(\F_q)\to \mathbf{P}^{c-2}(\F_q),\quad [X_0:X_1]\mapsto [X_0^{c-2}: X_0^{c-3}X_1:\dots:X_1^{c-2}]. $$ 
It is likely that in the cases corresponding to $\m_{n,c}$ (for $n>2$) and $\f_{n,c}$ (for $n \ge 2$ and $c>3$), the faithful dimension can be computed  using tools from algebraic geometry. 
\end{remark}


\subsection{Outline of the argument for Remark~\ref{thm:f2c}}
\label{subsec-outlineTable}
We will only consider the case $c=6$, since the other cases are similar (and the calculations are a bit simpler).  For $\f:=\f_{2,6}$, the reduced commutator matrix is a block matrix of the form:
$$
F(x_1,\dots,x_9)=
\footnotesize
\begin{pmatrix}
0 & 0 & 0 & 0 & F_{1,5}\\
0 & 0 & 0 & F_{2,4} & 0\\
0 & 0 & F_{3,3} & 0 & 0\\
0 & -F_{2,4}^{\mathrm{tr}} & 0 & 0 & 0\\
-F_{1,5}^{\mathrm{tr}} &0 & 0 & 0 & 0
\end{pmatrix},
$$
where 
\begin{equation}\label{eq1}
\begin{split}
F_{1,5}(x_1,\dots,x_9)&=
\begin{pmatrix}
x_1 & x_2+x_6 & x_3+2x_7-x_9 & x_4+2x_8 & x_6 & x_7+x_9\\
x_2 & x_3 & x_4 & x_5 & x_7-x_9 & x_8
\end{pmatrix};\\[.1cm]
F_{2,4}(x_1,\dots,x_9)&=
\begin{pmatrix}
x_6 & x_7 & x_8
\end{pmatrix};\\[.1cm]
F_{3,3}(x_1,\dots,x_9)&=
\begin{pmatrix}
0 & x_9\\
-x_9 & 0
\end{pmatrix}.
\end{split}
\end{equation}
For $p\geq 7$, Proposition~\ref{algorithm-free} implies that
\begin{equation}\label{eq-App}
\mf(\GG_p)=\min\left\{ \sum_{\ell=1}^{9}  p^{\frac{\rank_{\F_p}\left(F\left(a_{\ell 1},\dots,a_{\ell 9}\right)\right)}{2}}: \begin{pmatrix}
a_{1 1} & \dots & a_{1 9}\\
\vdots  & \ddots & \vdots\\
 a_{91} & \dots & a_{99}
\end{pmatrix}\in\mathrm{GL}_{9}(\F_p)\right\}.
\end{equation}
We can easily verify that $\rank_{\F_p}(F_{1,5}({\bf x}))\geq 1$ when $0\neq {\bf x}:=(x_1,\ldots,x_9)\in\mathbb{F}_p^9$.  
Also,
\[
\rank_{\F_p}(F(x_1,\dots,x_9))\geq 4
\]
whenever at least one of $x_6$, $x_7$ or $x_8$ is not zero. Similarly, 
$\rank_{\F_p}(F(x_1,\dots,x_9))\geq 6$
whenever
 $x_9\neq 0$.

Now let ${\bf x}_i:=(x_{i1},\dots,x_{i9})\in\mathbb{F}_p^9$, $1\leq i\leq 9$, be $9$ vectors with $\det(x_{ij})\neq 0$.  Thus after permuting the indices of the ${\bf x}_i$'s, we can assume that all of the diagonal entries $x_{ii}$, $1\leq i\leq 9$, are nonzero.  Hence, from~\eqref{eq-App} and the above discussion we deduce that the faithful dimension of $\GG_p$ is at least $p^3+3p^2+5p$. This dimension can be realized 
by the rows of following matrix:
\begin{equation*}\label{eq4}
\begin{pmatrix}
\,1 & 0 & 0 & 0 & 0 & 0 & 0 & 0 & 0\\[.2cm]
\,1 & \lambda_1 & \lambda_1^2 & \lambda_1^3 & \lambda_1^4 & 0 & 0 & 0 & 0\,\\[.1cm]
\,1 & \lambda_2 & \lambda_2^2 & \lambda_2^3 & \lambda_2^4 & 0 & 0 & 0 & 0\,\\[.1cm]
\,1 & \lambda_3 & \lambda_3^2 & \lambda_3^3 & \lambda_3^4 & 0 & 0 & 0 & 0\,\\[.1cm]
\,1 & \lambda_4 & \lambda_4^2 & \lambda_4^3 & \lambda_4^4 & 0 & 0 & 0 & 0\,\\[.1cm]
\,0 & 0 & \mu_1 & 3\mu_1^2 & 5\mu_1^3 & 1 & \mu_1 & \mu_1^2 & 0\,\\[.1cm]
\,0 & 0 & \mu_2 & 3\mu_2^2 & 5\mu_2^3 & 1 & \mu_2 & \mu_2^2 & 0\,\\[.1cm]
\,0 & 0 & \mu_3 & 3\mu_3^2 & 5\mu_3^3 & 1 & \mu_3 & \mu_3^2 & 0\,\\[.1cm]
\,0 & 0 & 0 & \eta & 5\eta^2 & 0 & 1 & 2\eta & 1\,
\end{pmatrix}\in \GL_9(\F_p),
\end{equation*}
where the $\lambda_i$'s and the $\mu_i$'s are distinct elements of $\F_p$.


\begin{acknowledgements}
We would like to thank Martin Bays, Emmanuel Breuillard, Tim Clausen, Jamshid Derakhshan, Martin Hils, Alan Huckleberry, Franziska Jahnke, Aleksandra Kwiatkowska, Gunter Malle, Katrin Tent and Pierre Touchard  for several useful discussions. Eamonn O'Brien and Christopher Voll brought a mistaken entry in Table~\ref{Table1} to our attention. We thank Christopher Voll for many useful comments.  
The authors would like  to thank the referee for carefully reading the manuscript and for providing numerous suggestions that substantially improved both the content and the exposition of the paper.  
\end{acknowledgements}



%

\end{document}